\newcommand{\dup}{\textup{d}}
\newcommand{\Z}{\mathbb{Z}}
\newcommand{\R}{\mathbb{R}}
\newcommand{\s}{\mathbb{S}}
\newcommand{\ov}[1]{\overline{#1}}
\title{\scalebox{0.75}{Topology and Embeddedness of Lawson's Bipolar Surfaces in the $5$-Sphere}}
\date{}
\author{\scalebox{0.8}{Elena Mäder-Baumdicker, Melanie Rothe}}
\newtheorem{theorem}{Theorem}[section] 
\newtheorem{lemma}[theorem]{Lemma}
\newtheorem{proposition}[theorem]{Proposition}
\newtheorem{corollary}[theorem]{Corollary}
\theoremstyle{definition}
\newtheorem{definition}[theorem]{Definition}
\newtheorem{example}[theorem]{Example}
\newtheorem{remark}[theorem]{Remark}
\newtheorem{construction}[theorem]{Construction}
\newtheorem{maintheorem}{Theorem}
\begin{document}
\maketitle

\begin{abstract}
\noindent
We give a topological classification of Lawson's bipolar minimal surfaces corresponding to his $\xi$- and $\eta$-family. Therefrom we deduce upper as well as lower bounds on the area of these surfaces, and find that they are not embedded.
\end{abstract}


\section{Introduction}
\renewcommand{\thefootnote}{\fnsymbol{footnote}} 
\footnotetext{\textbf{MSC2020:} 53A05 $\bullet$ 53A10 $\bullet$ 53C42
\textbf{Key words:} immersed surfaces $\bullet$ minimal surfaces $\bullet$ minimal surfaces and constant mean curvature surfaces in \(S^3\) $\bullet$ Lawson surfaces $\bullet$ spherical geometry}

Minimal surface theory is a classical and still very active field of research in Differential Geometry and Geometric Analysis. As the critical points of the area functional, minimal surfaces necessarily satisfy a curvature condition. Therefore, they constitute exceptional, two-dimensional submanifolds of a Riemannian manifold of dimension $\ge 3$. In this sense, one particular motivation behind minimal surface theory is to obtain a better geometric understanding of the potential ambient spaces.

Unlike Euclidean space $\R^n$ as an ambient manifold, the $n$-sphere $\s^n$ allows for closed minimal surfaces. So, a very general interest lies in their topological classification and how this enters their geometry. In this context, various existence results are known for $\s^3$, e.g.\ the surfaces from \cite{Lawson}, \cite{KPS}, \cite{KY} and \cite{CS} (for an introduction see \citep{Brendle}). It must, however, be added that regarding each topological class, the list of known examples is limited. Compared to the results in $\s^3$, the setting of higher codimension is rather unexplored but, at the same time, of strong interest. The reason is that closed minimal surfaces in $\s^n$, also in higher codimension, turned out to be relevant in further prominent geometric variational problems, in particular with respect to their topological class. On the one hand, this refers to the search for minimizers of the Willmore energy, which is the natural energy to employ when asking for optimally shaped, immersed closed surfaces in $\R^n$. The point is that closed minimal surfaces in $\s^n$ stereographically project onto critical surfaces in $\R^n$ (called Willmore surfaces), where their area is mapped to the Willmore energy of the stereographic projection. Therefore, finding closed minimal surfaces in $\s^n$ and estimating their area yields comparison surfaces for the Willmore problem (cf.\ \cite{Kusner}, \cite{MBH}). On the other hand, closed minimal surfaces in $\s^n$ play an important role in spectral geometry by providing extremal metrics for the normalized eigenvalue functionals of the Laplace-Beltrami operator on closed 2-manifolds (cf.\ \cite{Takahashi}, \cite{JNP}, \cite{Lapointe}, \cite{Penskoi}). 

In all these regards, Lawson's closed minimal surfaces in $\s^3$ have certainly played an important role. After initially the only known closed minimal surfaces in $\s^3$ were great two-spheres and the Clifford torus, a large number of less obvious examples was provided by H.\ B.\ Lawson in 1970. In \cite{Lawson}, he first deduced a general existence theorem relying on successive application of the Schwarz reflection principle and applied it in order to construct his well-known three families of closed minimal surfaces $(\xi_{m,k})$, $(\tau_{m,k})$ and $(\eta_{m,k})$. These families contain examples for every topological class, particularly embedded examples for every orientable genus. Concerning the Willmore problem, we remark that the stereographic projection of the Clifford torus, realized by both $\xi_{1,1}$ and $\tau_{1,1}$, is the minimizer of the Willmore energy among tori (cf.\ \cite{WillmoreConjecture}). More generally, the surfaces $\xi_{g,1}$, $g\ge 2$, are conjectured to be the minimizers for orientable genus $g$ (cf.\ \cite{Kusner}).

Besides the above construction procedure, Lawson provided in \cite{Lawson} a detailed investigation of the minimal surface equation in $\s^3$, which, inter alia, led to the notion of the so-called polar variety as well as the bipolar surface associated to each minimal surface in $\mathbb{S}^3$. More precisely, given a conformal immersion $\psi\colon\Sigma\to \mathbb{S}^3\subseteq\R^4$ of a closed, orientable, two-dimensional manifold $\Sigma$ (perhaps arising as double-cover of a non-orientable manifold), then each choice $\nu\colon\Sigma\to \mathbb{S}^3$ of a unit normal is again minimal, but possibly has branch points. In turn, the bipolar surface $\widetilde{\psi}:=\psi\wedge \nu\colon\Sigma\to \s^5\subseteq \R^6$ is again a conformal minimal immersion and, remarkably, is conformal to $\psi$ itself. 

With the example of the bipolar Lawson surfaces $\widetilde{\tau}_{m,k}$, H.\ Lapointe showed in \cite{Lapointe} that various properties of the bipolar surface can crucially differ from the original surface. Firstly, this concerns the topology: For example, it is known (Theorem 1.3.1 in \cite{Lapointe}) that if $mk\equiv 3\textup{ mod } 4$, then $\tau_{m,k}$ is a torus in $\s^3$, but $\widetilde{\tau}_{m,k}$ is a Klein bottle in $\s^5$. In that case, the immersion $\widetilde{\psi}$ is already well-defined on a quotient of $\Sigma$ obtained by a covering map of degree two with an orientation-reversing deck-transformation. A further point of comparison is the embeddedness: The example of the surface $\widetilde{\tau}_{3,1}$ shows the bipolar surface can turn out to be embedded, even if the original surface was not (cf.\ \cite{MBH}). In the present article, we show that also the converse does occur (cf. Corollary \ref{embeddedness}). 

From the perspective of the Willmore problem, we furthermore note that the stereographic projection of the bipolar surface $\widetilde{\tau}_{3,1}$, a Klein bottle in $\s^4\subseteq\s^5$, was, among Klein bottles in $\R^4$, identified as the unique minimizer in its conformal class, and is conjectured to be the minimizer in its topological class (cf.\ \cite{MBH}).

The motivation for the present paper is to study the other two families of bipolar Lawson surfaces $\big(\widetilde{\xi}_{m,k}\big)$ and $\big(\widetilde{\eta}_{m,k}\big)$, concerning topology, embeddedness and area estimates. Note that by the former we refer to the topology of the smallest possible domain of the immersion $\widetilde{\psi}$ which is, perhaps, realized by a quotient of $\Sigma$ under a covering map. Furthermore, note that, by the Li-Yau inequality (cf.\ \cite{LiYau}), lower bounds on the area of a closed minimal surface in $\s^n$ can be obtained from an investigation of the embeddedness. In particular, the area (and thus the Willmore energy) is at least $8\pi$ if the surface has self-intersections. 

Our approach uses the basic data of the Schwarz reflection process from the construction procedure in \cite{Lawson}, arising from a geodesic polygon $\Gamma\subseteq\s^3$. In particular, we employ the algebraic properties of the corresponding group $G$ generated by Schwarz reflections. Shortly speaking, we detect whether different $G$-copies of the initial piece of surface in $\s^3$ are mapped to the same pieces in the bipolar surface. Thereby, we can estimate how often the bipolar surface is covered by $\Sigma$. Within this setting, the occurrence of a higher cover is related to a certain element of $G$, and we find that the behavior of the orientation under such covering map can be tracked in terms of its (purely algebraic) properties within the group $G$. In the special case of the surfaces $\xi_{m,k}$ and $\eta_{m,k}$, we are sure that this method already provides a full characterization of the topology of the corresponding bipolar surfaces. By counting covers and computing multiplicities at certain points in the bipolar surfaces, we can also determine area bounds, and, by considering the tangent spaces at points of higher multiplicity, analyze their embeddedness. 

We remark that similar results on the bipolar $\tau$-family in \cite{Lapointe} heavily rely on the knowledge of explicit parametrizations, whereas such parametrizations are not known for the $\xi$- and $\eta$-family. 

Finally, we arrive at the following theorem on the family $(\widetilde{\xi}_{m,k})$. Note that, for a nicer presentation of formulas, we shifted the indices.
\begin{maintheorem}
Let $m,\,k\in \Z_{\ge 2}$ such that $m>2$ or $k>2$. Then, the minimal bipolar surface $\tilde{\xi}_{m-1,k-1}:\Sigma \to \mathbb{S}^5$ is orientable. Moreover,
\begin{enumerate}[label=\normalfont{(\roman*)}]
\item if both $m$ and $k$ are even, the Euler characteristic is 
\begin{align*}
\chi\Big(\tilde{\xi}_{m-1,k-1}\Big)=1-(m-1)(k-1)
\end{align*}
and we have
\begin{align*}
2\pi \max\{m, k\}\le\textup{area}\Big(\widetilde{\xi}_{m-1,k-1}\Big)<2\pi(mk+k-m)\,;
\end{align*}
\item if $m$ or $k$ is odd, the Euler characteristic is 
\begin{align*}
\chi\Big(\tilde{\xi}_{m-1,k-1}\Big)=2\big(1-(m-1)(k-1)\big)
\end{align*}
and we have
\begin{align*}
4\pi \max\{m, k\}\le\textup{area}\Big(\widetilde{\xi}_{m-1,k-1}\Big)<4\pi(mk+k-m)\,.
\end{align*}
\end{enumerate}
\end{maintheorem}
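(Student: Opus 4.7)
My plan is to apply the general framework developed earlier in the paper: encode the Lawson surface $\xi_{m-1,k-1}$ through the Schwarz reflection group $G\subseteq O(4)$ associated to its fundamental geodesic quadrilateral $\Gamma\subseteq\s^3$, and then determine the degree to which the bipolar map $\widetilde{\psi}=\psi\wedge\nu$ collapses the induced $G$-tessellation of $\Sigma$. Concretely, $\Sigma$ is paved by the $G$-orbit of a fundamental minimal disk $D_0$, and two copies $g_1D_0$ and $g_2D_0$ are identified by $\widetilde{\psi}$ exactly when $T:=g_1^{-1}g_2$ satisfies $\Lambda^2 T\cdot(\psi(p)\wedge\nu(p))=\psi(p)\wedge\nu(p)$ for all $p\in D_0$. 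I would therefore introduce the subgroup
\[
H=\big\{T\in G\colon\Lambda^2T\text{ fixes }\psi\wedge\nu\text{ pointwise on }D_0\big\}
\]
and reduce the topological analysis to pinning $H$ down.

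The crucial algebraic step is the identification $H=G\cap\{\pm\mathrm{id}_{\R^4}\}$. The inclusion ``$\supseteq$'' is immediate, since $\Lambda^2(\pm\mathrm{id})=\mathrm{id}_{\R^6}$. Conversely, because $\widetilde{\psi}$ is a full minimal immersion into $\s^5\subseteq\R^6$ (its image spans $\R^6$), any $\Lambda^2T\in O(6)$ that fixes $\widetilde{\psi}$ on the two-parameter family $D_0$ must equal $\mathrm{id}_{\R^6}$, and this forces $T=\pm\mathrm{id}_{\R^4}$. Next I would determine when $-\mathrm{id}_{\R^4}$ actually belongs to $G$: since $G$ is generated by reflections across the four sides of $\Gamma$, whose vertex angles at the two pairs of opposite vertices are $\pi/m$ and $\pi/k$, a parity analysis of the associated rotations in the two orthogonal coordinate $2$-planes of $\R^4$ shows that $-\mathrm{id}_{\R^4}\in G$ if and only if both $m$ and $k$ are even. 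Consequently the bipolar covering $\Sigma\to\widetilde{\Sigma}=\Sigma/H$ has degree $d=2$ in case (i) and $d=1$ in case (ii); since $-\mathrm{id}_{\R^4}\in SO(4)$ acts by an orientation-preserving deck transformation on $\Sigma$, the quotient $\widetilde{\Sigma}$ is orientable in both cases.

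The Euler characteristic formulas then follow from multiplicativity under the cover, combined with the known value $\chi(\xi_{m-1,k-1})=2-2(m-1)(k-1)$. For the lower area bounds I would invoke the Li--Yau inequality: the vertices of $\Gamma$ lying on the two orthogonal great circles belong to rotational $G$-orbits of length $m$ and $k$, so their images under $\widetilde{\psi}$ are points of multiplicity $\max\{m,k\}$ in case (ii) and $\max\{m,k\}/2$ in case (i), the halving being forced by the further identification by $-\mathrm{id}$. Li--Yau then yields the constants $4\pi\max\{m,k\}$ and $2\pi\max\{m,k\}$. The upper bound reduces, via $\operatorname{area}(\widetilde{\psi})=\operatorname{area}(\xi_{m-1,k-1})/d$, to the uniform estimate $\operatorname{area}(\xi_{m-1,k-1})<4\pi(mk+k-m)$, which I would establish by majorizing the fundamental minimal disk $D_0$ against an explicit geodesic comparison surface on $\partial D_0$.

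The main technical obstacle is the identification of $H$: both the rigidity argument $T\in H\Rightarrow T=\pm\mathrm{id}_{\R^4}$ and, more delicately, the parity characterization of when $-\mathrm{id}_{\R^4}$ is actually realized as a product of the generating reflections of $G$ require a careful analysis of the reflection group structure. Once this step is in place, orientability, both Euler characteristic formulas, and the overall form of the area bounds follow essentially automatically from multiplicativity under the cover and the Li--Yau inequality.
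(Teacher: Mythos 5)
Your overall architecture (reflection group, the role of $-\mathbbm{1}_4$, Li--Yau below, Gauss--Bonnet above) matches the paper's, but three steps as written do not go through. First, your identification $H=G\cap\{\pm\mathrm{id}\}$ rests on the claim that $\widetilde{\psi}|_{D_0}$ is full in $\R^6$; this is unjustified and is false for bipolar surfaces in general (e.g.\ $\widetilde{\tau}_{3,1}$ lies in a great $\s^4\subseteq\s^5$). More importantly, even a correct computation of $H$ only classifies identifications of the form $[(g_1,p)]\sim[(g_2,p)]$ with the \emph{same} parameter point; it does not exclude coverings whose deck transformations move $p$, nor coincidences $\widetilde{\psi}([(g,p)])=\widetilde{\psi}([(h,q)])$ with $q\neq p$. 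The paper closes exactly this gap by computing the \emph{full} preimage of the concrete points $P_0\wedge\hat{P}_0$ and $Q_0\wedge(-\hat{Q}_0)$ (showing every preimage has base point $p_0$, resp.\ $q_0$, so the multiplicities are exactly $k$ and $m$) and then checking that the $k$ (resp.\ $m$) tangent planes there are pairwise transversal except for the pairs matched by $-\mathbbm{1}_4$ when $m,k$ are both even. This tangent-plane transversality is what bounds the covering degree from above (and, as a by-product, gives non-embeddedness); without it your Euler characteristic formulas are only inequalities.

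Second, your orientability argument via ``$-\mathrm{id}_{\R^4}\in SO(4)$ acts orientation-preservingly'' cannot work: every generator $r_{ij}$ is a reflection in a $2$-plane and hence already lies in $SO(4)$, so the determinant detects nothing (indeed the $\eta$-surfaces are non-orientable although their groups lie in $SO(4)$). The correct invariant is the parity $\sigma$ (word length mod $2$ in the generating reflections), which governs the sign flip of the Gauss map; one needs $\sigma(-\mathbbm{1}_4)=0$, which holds here because $-\mathbbm{1}_4=\big(\textbf{R}_{2\pi/k}^{\scriptscriptstyle(12)}\big)^{k/2}\big(\textbf{R}_{2\pi/m}^{\scriptscriptstyle(34)}\big)^{m/2}$ and each rotation is a product of two reflections. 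Third, your upper bound uses $\mathrm{area}(\widetilde{\psi})=\mathrm{area}(\xi_{m-1,k-1})/d$, which is not the right relation: by Lemma \ref{bipolar_surface_conformal} one has $\mathrm{area}(\widetilde{\psi})=\big(2\,\mathrm{area}(\xi_{m-1,k-1})-2\pi\chi(S)\big)/d$, and the stated bound $4\pi(mk+k-m)$ arises from Kusner's estimate $\mathrm{area}(\xi_{m-1,k-1})<4\pi k$ together with $\chi(S)=2\big(1-(m-1)(k-1)\big)$; there is no need (and it would not suffice) to prove a new estimate $\mathrm{area}(\xi_{m-1,k-1})<4\pi(mk+k-m)$ by comparison surfaces.
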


\noindent For the family $(\widetilde{\eta}_{m,k})$ we show the following theorem.
\begin{maintheorem}
Let $m,\,k\in \Z_{\ge 2}$ such that $m>2$ or $k>2$. Then, the minimal bipolar surface $\tilde{\eta}_{m-1,k-1}:\Sigma \to \mathbb{S}^5$ is orientable. Moreover,
\begin{enumerate}[label=\normalfont{(\roman*)}]
\item if both $m$ and $k$ are even, the Euler characteristic is 
\begin{align*}
\chi\Big(\tilde{\eta}_{m-1,k-1}\Big)=1-(m-1)(k-1)
\end{align*}
and we have
\begin{align*}
2\pi \max\{m, k\}\le\textup{area}\Big(\widetilde{\eta}_{m-1,k-1}\Big)<2\pi(3mk -3k - m)\,;
\end{align*}

\item if $m$ or $k$ is odd, the Euler characteristic is 
\begin{align*}
\chi\Big(\tilde{\eta}_{m-1,k-1}\Big)= 2\big(1-(m-1)(k-1)\big)
\end{align*}
and we have
\begin{align*}
4\pi \max\{m, k\}\le\textup{area}\Big(\widetilde{\eta}_{m-1,k-1}\Big)<4\pi(3mk -3k - m)\,.
\end{align*}

\end{enumerate}
\end{maintheorem}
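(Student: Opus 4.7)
The plan is to apply to the $\eta$-family exactly the same procedure used for $(\widetilde{\xi}_{m-1,k-1})$ in the preceding theorem, replacing the data of the $\xi$-polygon by that of the $\eta$-polygon. Concretely, I would first recall from \cite{Lawson} the geodesic quadrilateral $\Gamma^\eta_{m-1,k-1}\subseteq\s^3$ and a fundamental piece $F$ of $\eta_{m-1,k-1}$ bounded by it, and work with the Schwarz reflection group $G$ generated by reflections in the four sides of $\Gamma^\eta_{m-1,k-1}$. Next I would search in $G$ for the word $h$ that identifies two distinct $G$-copies of $F$ after post-composition with the bipolar map $\psi\mapsto\psi\wedge\nu$, and use the algebraic criterion already developed in the earlier sections to translate existence or absence of $h$ into the stated parity condition on $m$ and $k$. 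I expect that $h$ exists precisely when both $m$ and $k$ are even, so that the minimal domain of $\widetilde{\eta}_{m-1,k-1}$ is the $\Z_2$-quotient of $\Sigma$ by the free action generated by $h$, while for $m$ or $k$ odd no such identification occurs and the bipolar map is injective on $G$-copies.

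Once this covering degree $d\in\{1,2\}$ is settled, orientability follows from the algebraic criterion of the earlier sections applied to the signed length of $h$ as a word in the four reflections, and the Euler characteristic formulas drop out of a Riemann--Hurwitz computation for the branched covering $\Sigma\to\widetilde{\eta}_{m-1,k-1}$, accounting for those vertices of $\Gamma^\eta_{m-1,k-1}$ at which the bipolar immersion picks up branching; the two displayed values of $\chi$ differ exactly by the factor $d$, as predicted. For the upper area bound I would combine Lawson's closed-form expression for $\textup{area}(\eta_{m-1,k-1})$ from \cite{Lawson} with the pointwise identity between the bipolar and the original induced metric, which, together with Gauss--Bonnet, yields
\begin{equation*}
d\cdot\textup{area}\bigl(\widetilde{\eta}_{m-1,k-1}\bigr)=\textup{area}\bigl(\eta_{m-1,k-1}\bigr)-2\pi\chi(\Sigma).
\end{equation*}
Inserting the known data should produce the strict upper bound $(4/d)\pi(3mk-3k-m)$. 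The lower bound is obtained by locating a point of multiplicity $\max\{m,k\}$ in the bipolar surface, arising from the order-$\max\{m,k\}$ rotation of $G$ stabilizing a vertex of $\Gamma^\eta_{m-1,k-1}$, and feeding this multiplicity into the Li--Yau inequality; this produces the factor $4\pi\max\{m,k\}/d$.

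The main technical obstacle I foresee is the first step. The $\eta$-polygon has less symmetry than the $\xi$-polygon, so writing $h$ down explicitly as a word in the four generating reflections and verifying its effect on the unit normal field $\nu$ requires genuine case-work for the two parity situations. In addition, the asymmetric coefficient $3mk-3k-m$ in the upper bound, contrasted with the cleaner $mk+k-m$ of the $\widetilde{\xi}$-case, reflects a more intricate geometry of the $\eta$-polygon, so Lawson's area formula has to be combined with the Euler characteristic in a careful way to land on this polynomial. Once these two checks are carried out, the remaining arguments (orientability, Euler characteristic, and the two area bounds) run in parallel to the $\widetilde{\xi}$ proof with only arithmetic substitutions.
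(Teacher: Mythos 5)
Your overall strategy is the one the paper follows: the reflection group of the $\eta$-polygon, detection of an identifying group element, a multiplicity/tangent-plane analysis, Li--Yau from below and the conformal-factor identity from above. But two concrete gaps would derail the execution. First, you never address that $\eta_{m-1,k-1}$ is itself \emph{non-orientable} precisely when $k$ is even (the relation $(r_{11}r_{01})^{k/2}\cdot r_Q=\mathbbm{1}_4$ writes the identity as an odd word in the generators). In that case the bipolar immersion has to be set up on the orientable double cover $\ov{S}$ of the fundamental domain $S$ (Construction \ref{doublecover}), and the parity dichotomy of the theorem is not ``does a single $h$ exist''; it is the superposition of two independent parities: the parity of $k$ decides whether one works on $S$ or on $\ov{S}$, and the parity of $m$ decides whether $-\mathbbm{1}_4$ lies in the group with the right parity $\sigma$ so that Theorem \ref{-1_cover} produces a further degree-two quotient. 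For $k$ even and $m$ odd the domain is $\ov{S}$ with no further quotient; for $k$ odd it is $S$ itself; only when both are even does the extra $\Z_2$ appear. Without this bookkeeping you cannot correctly identify $\Sigma$, its Euler characteristic, or the area of the immersion on it. (Also, the covering $\Sigma\to\widetilde{\eta}_{m-1,k-1}$ is unbranched --- by Lemma \ref{bipolar_surface_conformal} the bipolar immersion is non-singular, the branching sits in the polar variety $\nu$, not in $\widetilde{\psi}$ --- so there are no Riemann--Hurwitz correction terms; $\chi$ simply divides by the covering degree.)

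Second, your displayed identity is off: $g^{\widetilde{\psi}}=(2-K)g$ gives $\textup{area}\big(\widetilde{\psi}\big)=2\,\textup{area}(\psi)-2\pi\chi(\Sigma)$, with a factor $2$ on the area term that you dropped; and there is no closed-form expression for $\textup{area}(\eta_{m-1,k-1})$ in \cite{Lawson} --- the paper uses Kusner's strict bound $\textup{area}(\eta_{m-1,k-1})<2\pi(m-1)k$ for $k$ even and must itself supply the analogue $<4\pi(m-1)k$ for $k$ odd (the reflection group doubles). Only with the factor $2$, with $\chi(\Sigma)=2\big(1-(m-1)(k-1)\big)$ on the correct cover, and with the appropriately scaled Kusner bound does the arithmetic land on $4\pi(3mk-3k-m)$, halved when both parities are even. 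Finally, ``failing to find $h$'' does not prove minimality of the domain: to rule out identifications beyond the $-\mathbbm{1}_4$ quotient you need the transversal intersection of the tangent planes at the multiple points $P_1\wedge\big(-\hat{P}_1\big)$ and $Q_1\wedge Q_1$, which is also what yields non-embeddedness.
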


Note that, for technical reasons, the above theorems do not include the bipolar surfaces of the Clifford torus $\xi_{1,1}$ and the Klein bottle $\eta_{1,1}$. However, these particular surfaces were already treated in \cite{Lapointe} as they coincide with $\tau_{1,1}$ and $\tau_{2,1}$.

Concerning the strategy of the proofs, we first translate Lawson's construction procedure from \cite{Lawson} of a closed minimal surface in $\s^3$ into a corresponding immersion $\psi\colon S\to \s^3$, defined on a smallest fundamental domain $S$. Then, using $S$ or possibly its orientable double cover $\widetilde{S}$, we continue with the definition of an associated Gauss map $\nu$, and therewith introduce the corresponding bipolar immersion $\widetilde{\psi}$. The crucial point about our setup is that the symmetries in the images of the considered maps directly relate to smooth self-mappings of $S$. From that, we can on the one hand decide if these contain smooth covering maps on $S$ under which $\widetilde{\psi}$ is invariant, and on the other hand, if such maps are orientation-reversing or not. Furthermore, by studying these self-mappings, we can detect points of higher multiplicity in the bipolar surface and the different tangent planes at such points. In the end, a detailed analysis of the respective group generated by Schwarz reflections allows to apply the above methods to the bipolar Lawson surfaces $\widetilde{\xi}_{m,k}$ as well as $\widetilde{\eta}_{m,k}$, which finally lead to our main theorems.

The paper is organized as follows: We start with a preliminary section, Section \ref{preparations}, intended to provide the setting for Lawson's description of the bipolar surface. In Section \ref{preimage_construction}, we treat the immersions, their Gauss maps and, finally, the corresponding bipolar immersions. In Section \ref{xi_eta_surfaces} we apply the methods from Section \ref{preimage_construction} to the bipolar Lawson surfaces $\widetilde{\xi}_{m,k}$ and $\widetilde{\eta}_{m,k}$.

\subsection*{Acknowledgments}
We would like to thank Karsten Große-Brauckmann, Jonas Hirsch and Francisco Torralbo for discussions and their interest in our work. Moreover, we thank Sukie Vetter and the geometry group at the Department of Mathematics of the TU Darmstadt for their helpful comments. The first author is supported by the DFG (MA 7559/1-2) and thanks the DFG for the support. We furthermore remark that the results of this article are part of the second author's PhD thesis.

\section{Preparations}\label{preparations}
\subsection{Minimal Submanifolds in the $n$-Sphere}
In an $\ov{m}$-dimensional Riemannian manifold $\big(\ov{M},\langle\cdot,\cdot\rangle\big)$, consider an $m$-dimensional, possibly immersed submanifold $M\subseteq \ov{M}$, equipped with the metric $g:=\iota^*\langle\cdot,\cdot\rangle$ induced by the canonical inclusion $\iota: M\hookrightarrow \ov{M}$. 
At each point $P\in M\subseteq \ov{M}$, the tangent space $T_P\ov{M}$ splits into 
\begin{align*}
T_P\ov{M}=T_P M \oplus N_P M\cong \R^m\oplus \R^{\ov{m}-m},
\end{align*}
where 
\begin{align*}
N_P M:=\Big(T_P M\Big)^{\perp,\langle\cdot,\cdot\rangle}
\end{align*}
is called \textit{the normal space to $M$ at $P$}. We denote the corresponding orthogonal projections, depending smoothly on $P$, by
\begin{alignat*}{2}
&(\cdot)^T|_P &&\colon T_P\ov{M}\to T_P M\,,\\[5pt]
&(\cdot)^N|_P &&\colon T_P\ov{M}\to N_P M\,.
\end{alignat*}
Let now $\ov{\nabla}$ be the Levi-Civita connection on $\ov{M}$ and $X_1,...,X_m\in\mathfrak{X}(M)$ be smooth vector fields on $M$, which, as such, locally extend to smooth vector fields on $\ov{M}$. With respect to the decomposition from above, we have
\begin{align}
\ov{\nabla}_X Y=\big(\ov{\nabla}_X Y\big)^T+\big(\ov{\nabla}_X Y\big)^N.\label{LeviCivitaDecomposition}
\end{align}
\begin{definition}
The normal component of \eqref{LeviCivitaDecomposition},
\begin{align*}
B\colon\mathrm{X}(M)\times \mathrm{X}(M)\to \Gamma(NM),~B(X,Y):=\big(\ov{\nabla}_X Y\big)^N\,, 
\end{align*}
where $\Gamma(NM)$ denotes the set of smooth sections in the normal bundle $NM$, is called the \textit{second fundamental form $B$ of the Riemannian submanifold $M\subseteq \ov{M}$}. 
\end{definition}
\noindent From the properties of $\ov{\nabla}$ it follows that $B$ is $C^\infty(M)$-bilinear and symmetric. Moreover, for any $X,\,Y\in\mathfrak{X}(M)$, $B(X,Y)$ is independent of the local extensions of $X,\,Y$ to smooth vector fields on $\ov{M}$ and the value of $B(X,Y)|_P$ at $P\in M$ does only depend on the values $X|_P$ and $Y|_P$. The Gauss formula
\begin{align*}
\ov{\nabla}_X Y-\nabla_X Y=B(X,Y)\,,
\end{align*}
where $\nabla$ is the Levi-Civita connection on $(M,g)$, shows that the second fundamental form precisely describes the difference between the interior geometry of $M$ and its exterior geometry in the ambient manifold $\ov{M}$. In this context, we want to include two well-known equations.
\begin{definition}
To every normal vector field $N\in\Gamma(NM)$ we associate the map
\begin{align*}
\Pi_N\colon\mathfrak{X}(M)\times\mathfrak{X}(M)\to C^\infty(M),\quad \Pi_N(X,Y):=\langle N, B(X,Y)\rangle\,,
\end{align*}
defining a symmetric bilinear form at each point. The corresponding, pointwise self-adjoint linear map $W_N:\mathfrak{X}(M)\to\mathfrak{X}(M)$, i.e.,
\begin{align*}
\langle W_N(X),Y\rangle=\Pi_N(X,Y)\quad\textup{for }X,\,Y\in\mathfrak{X}(M)\,,
\end{align*}
is called the \textit{Weingarten map in the direction of} $N$.
\end{definition}
\begin{proposition}[The Weingarten Equation]\label{Weingarten_equation}
Let $X\in\mathfrak{X}(M)$ and $N\in\Gamma(NM)$. Then, we have
\begin{align*}
\Big(\ov{\nabla}_X N\Big)^T=-W_N(X)\,,
\end{align*}
where $N$ is locally extended to an open subset of $\ov{M}$.
\end{proposition}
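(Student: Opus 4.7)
The plan is to exploit the orthogonality $\langle N,Y\rangle = 0$ for any tangent field $Y\in\mathfrak{X}(M)$, together with the metric compatibility of the Levi-Civita connection $\overline{\nabla}$, and then read off the tangential projection of $\overline{\nabla}_X N$ by testing against arbitrary $Y$. Since $(\cdot)^T$ is characterized by its inner products with tangent vectors, it suffices to compute $\langle \overline{\nabla}_X N, Y\rangle$ for all $Y\in\mathfrak{X}(M)$.

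First I would fix $Y\in\mathfrak{X}(M)$ and extend $Y$ and $N$ to smooth vector fields on an open neighborhood of a given point $P\in M$ in $\overline{M}$ (these extensions exist, and the identity we aim to prove is independent of them, since only values of $N,Y$ along $M$ and their derivatives along tangent directions enter). By construction $\langle N,Y\rangle\equiv 0$ along $M$, so applying $X$ (which is tangent to $M$) yields
\begin{align*}
0 \;=\; X\langle N,Y\rangle \;=\; \langle \overline{\nabla}_X N,\,Y\rangle + \langle N,\,\overline{\nabla}_X Y\rangle.
\end{align*}
Next I would invoke the Gauss formula $\overline{\nabla}_X Y = \nabla_X Y + B(X,Y)$. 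Since $\nabla_X Y\in\mathfrak{X}(M)$ and $N\in\Gamma(NM)$, the term $\langle N,\nabla_X Y\rangle$ vanishes, leaving
\begin{align*}
\langle \overline{\nabla}_X N,\,Y\rangle \;=\; -\langle N, B(X,Y)\rangle \;=\; -\Pi_N(X,Y) \;=\; -\langle W_N(X),Y\rangle,
\end{align*}
by definition of $\Pi_N$ and $W_N$.

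Finally I would conclude by noting that $W_N(X)\in\mathfrak{X}(M)$, so the last identity reads $\langle \overline{\nabla}_X N - (-W_N(X)),\,Y\rangle = 0$ for every $Y\in\mathfrak{X}(M)$. Decomposing $\overline{\nabla}_X N = (\overline{\nabla}_X N)^T + (\overline{\nabla}_X N)^N$ and observing that $(\overline{\nabla}_X N)^N$ is $g$-orthogonal to $Y$, this forces $(\overline{\nabla}_X N)^T = -W_N(X)$ by non-degeneracy of $g$ on $TM$. I do not anticipate a real obstacle here; the only point requiring mild care is the extension of $N$ off $M$ and the verification that the tangential component of $\overline{\nabla}_X N$ is intrinsic to the pair $(X,N|_M)$, which follows from the $C^\infty(M)$-linearity implicit in the identity above.
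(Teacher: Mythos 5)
Your argument is correct: differentiating $\langle N,Y\rangle\equiv 0$ with the metric-compatible connection $\ov{\nabla}$, substituting the Gauss formula $\ov{\nabla}_X Y=\nabla_X Y+B(X,Y)$, and using the defining relation $\langle W_N(X),Y\rangle=\Pi_N(X,Y)=\langle N,B(X,Y)\rangle$ yields $\langle \ov{\nabla}_X N,Y\rangle=-\langle W_N(X),Y\rangle$ for all tangent $Y$, which identifies the tangential part by non-degeneracy of $g$ on $TM$. The paper records this proposition as a well-known equation without supplying a proof, so there is nothing to compare against; your derivation is the standard one and is complete, including the correct observation that the identity is independent of the chosen local extension of $N$.
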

\noindent The second equation shows that, more detailed, the second fundamental form encodes the difference between the curvature tensors $Rm$ and $\ov{Rm}$ of $M$ and $\ov{M}$.
\begin{proposition}[The Gauss Equation]\label{Gauss_equation}
Let $W,\, X,\, Y,\, Z\in\mathfrak{X}(M)$. Then, we have
\begin{align*}
\ov{Rm}(W,X,Y,Z)=Rm(W,X,Y,Z)-\langle B(W,Z),B(X,Y)\rangle+\langle B(W,Y),B(X,Z)\rangle\,.
\end{align*}
\end{proposition}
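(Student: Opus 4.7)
The plan is to compute the difference $\overline{Rm}(W,X,Y,Z) - Rm(W,X,Y,Z)$ by systematically replacing each $\overline{\nabla}$ with $\nabla+B$ via the Gauss formula and then projecting the resulting expression onto the tangent space by pairing with $Z\in\mathfrak{X}(M)$. First I would unwind the defining identity $\overline{R}(W,X)Y=\overline{\nabla}_W\overline{\nabla}_X Y-\overline{\nabla}_X\overline{\nabla}_W Y-\overline{\nabla}_{[W,X]} Y$ by expanding
$\overline{\nabla}_W\overline{\nabla}_X Y=\nabla_W\nabla_X Y+B(W,\nabla_X Y)+\overline{\nabla}_W B(X,Y)$
together with its twin obtained by swapping $W$ and $X$, and $\overline{\nabla}_{[W,X]}Y=\nabla_{[W,X]}Y+B([W,X],Y)$.

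Taking the inner product with the tangential field $Z$, every term of the form $B(\cdot,\cdot)$ (which lies in $\Gamma(NM)$) drops out of $\langle\,\cdot\,,Z\rangle$. After combining the intrinsic pieces into $Rm(W,X,Y,Z)$, what remains to analyze is
$\langle \overline{\nabla}_W B(X,Y)-\overline{\nabla}_X B(W,Y),\,Z\rangle$. Here I would invoke the Weingarten equation (Proposition \ref{Weingarten_equation}): since $B(X,Y)\in\Gamma(NM)$, one has $(\overline{\nabla}_W B(X,Y))^T=-W_{B(X,Y)}(W)$, hence
$\langle\overline{\nabla}_W B(X,Y),Z\rangle=-\langle W_{B(X,Y)}(W),Z\rangle=-\Pi_{B(X,Y)}(W,Z)=-\langle B(X,Y),B(W,Z)\rangle$.
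The analogous identity for the second term produces $-\langle B(W,Y),B(X,Z)\rangle$, which enters with the opposite sign and yields $+\langle B(W,Y),B(X,Z)\rangle$. Summing up delivers exactly the claimed formula.

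I do not expect a substantial obstacle: once the expansion is in place, the result is forced by book-keeping of tangential versus normal components. The only real care is to verify that the mixed terms $B(W,\nabla_X Y)$, $B(X,\nabla_W Y)$, $B([W,X],Y)$ are indeed all normal and thus harmless after pairing with $Z$, and to keep the sign convention in the Weingarten equation straight. An equally short alternative, if one prefers to avoid the Weingarten equation, is to differentiate the identity $\langle B(X,Y),Z\rangle\equiv 0$ along $W$ using metric compatibility of $\overline{\nabla}$, obtaining $\langle\overline{\nabla}_W B(X,Y),Z\rangle=-\langle B(X,Y),\overline{\nabla}_W Z\rangle$, and then apply the Gauss formula once more to $\overline{\nabla}_W Z$, noting that $\nabla_W Z$ is tangent so only $B(W,Z)$ contributes.
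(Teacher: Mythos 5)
The paper does not actually prove this proposition---it is stated as a standard fact (with a pointer to its role relating $Rm$ and $\ov{Rm}$) and used later only in the coordinate form $4\lambda^2(1-K)=\beta_{12}^2-\beta_{11}\beta_{22}$. Your argument is the standard textbook derivation and it is correct: expanding $\ov{\nabla}_W\ov{\nabla}_XY$ via the Gauss formula, killing the purely normal terms against the tangential field $Z$, and converting $\langle\ov{\nabla}_W B(X,Y),Z\rangle$ into $-\langle B(X,Y),B(W,Z)\rangle$ (either through the Weingarten equation with $N=B(X,Y)$, or, as in your alternative, by differentiating $\langle B(X,Y),Z\rangle\equiv 0$ and applying the Gauss formula to $\ov{\nabla}_WZ$) reproduces exactly the signs in the stated identity under the convention $Rm(W,X,Y,Z)=\langle\nabla_W\nabla_XY-\nabla_X\nabla_WY-\nabla_{[W,X]}Y,\,Z\rangle$, which is the convention consistent with the paper's later use in equation \eqref{Gauss_equation_codim1}. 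The one point worth stating explicitly if you write this out is that $B(X,Y)$ is a genuine section of $NM$, so the Weingarten equation as formulated in Proposition \ref{Weingarten_equation} applies to it; also beware the notational clash between the vector field $W$ and the Weingarten map $W_N$. No gap.
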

\noindent To continue, the second fundamental form allows to introduce one of the simplest and perhaps most relevant geometric invariants of the Riemannian submanifold $M\subseteq\ov{M}$.
\begin{definition}
The \textit{mean curvature vector} of $M$ is defined by
\begin{align*}
H:=\textup{tr}_g(B)\,,
\end{align*}
i.e., in terms of a local, $g$-orthonormal frame $(X_1,...,X_m)$ on $M$, we have
\begin{align*}
H=\sum_{i=1}^m B(X_i,X_i)\,.
\end{align*}
\end{definition}
\noindent Towards an interpretation of the mean curvature vector, suppose that $M=M_\psi$ is described as the image of a smooth immersion $\psi\colon \Sigma\to M$ of a smooth, $m$-dimensional manifold $\Sigma$. In this setting, we have $g=\psi^*\langle\cdot,\cdot\rangle$. Then, the \textit{area} of $\psi$ is defined by 
\begin{align*}
\textup{area}(\psi):=\int\limits_\Sigma \dup\mu_g\,.
\end{align*}
Consider a \textit{smooth variation of $\psi$}, that is, a smooth family $\psi\colon (-1,1)\times \Sigma\to \ov{M}$ of immersions such that $\Psi(0,\cdot)=\psi$ and $\Psi(t,\cdot)|_{\partial \Sigma}=\psi|_{\partial \Sigma}$ for all $t\in(\text{-}1,1)$. Let furthermore $\partial_t$ be the canonical vector field along $(-1,1)$ and set $E:=\Psi_* \partial_t |_{t=0}$.  Then, as for example deduced in \cite{Lawson_Lecture}, we have
\begin{align*}
\frac{\dup }{\dup t}\,\textup{area}\big(\Psi(t,\cdot)\big)\Bigg|_{t=0}=-\int\limits_\Sigma \langle H,E\rangle~ \dup \mu_{g}\,.
\end{align*}
This means the mean curvature $H$ of $M_\psi$ is exactly the gradient of the area functional on the space of immersions that describe the submanifold $M\subset\ov{M}$, i.e., deformations of $\psi$ in the direction of $H$ provide the fastest decrease of the area of the submanifold. 
In that light, as the critical points of the area functional, Riemannian submanifolds for which $H\equiv 0$ are particularly distinguished.
\begin{definition}
The submanifold $M$ as well as a corresponding immersion $\psi\colon \Sigma\to \ov{M}$ are called \textit{minimal} if
$H\equiv 0$. 
\end{definition}
\noindent Given local coordinates on $\Sigma$, the property that $H\equiv 0$ is equivalent to an elliptic, (in general) non-linear system of partial differential equations. To concretize the latter, we keep the assumption that $M$ is presented as the image $M_\psi=\psi(\Sigma)$ of a smooth immersion $\psi\colon \Sigma\to \ov{M}$ of an $m$-dimensional manifold $\Sigma$, which is equipped with the induced metric $g=\psi^*\langle\cdot,\cdot\rangle$. 
%
%
\begin{example}\label{meancurvatureRn}
Consider the case of $\ov{M}=\R^n$ equipped with the Euclidean metric. Then, the mean curvature vector of $\psi\colon \Sigma\to \R^n$ is given by
\begin{align*}
H^{\R^n}=\Delta_g\psi\,.
\end{align*}
Consequently, $\psi\colon\Sigma\to\R^n$ is a minimal immersion if and only if 
\begin{align*}
\Delta_g\psi=0\,.
\end{align*}
\end{example}
\begin{proof}
Let $(X_1,...,X_m)$ be a local, orthonormal frame on $M_\psi$. Then, we have
\begin{align*}
H^{\R^n}&=\sum_{i=1}^m \Big(\nabla_{X_i}^{\R^n}X_i\Big)^N
=\sum_{i=1}^m \nabla_{X_i}^{\R^n}X_i-\nabla_{X_i} X_i
=\sum_{i=1}^m X_i X_i-\nabla_{X_i}X_i\\[10pt]
&\hspace{1cm}=\sum_{i=1}^m X_i X_i \psi-\big(\nabla_{X_i}X_i\big)\psi
=\textup{tr}_g (\nabla\nabla \psi)
=\Delta_g\psi\,.\qedhere
\end{align*}
\end{proof}
\noindent The previous consideration immediately allows to generalize to ambient spaces given by embedded, Riemannian submanifolds of Euclidean space.
\begin{proposition}\label{meancurvaturesubmanifold}
Let $\ov{M}\subseteq\R^n$ be an embedded, Riemannian submanifold $\ov{M}$ of Euclidean space. Analogously as before, this induces an orthogonal splitting of the tangent spaces to $\R^n$ along $\ov{M}$. Denoting by $(\cdot)^{\ov{T}}$ the respective projection onto $T\ov{M}$, we have 
\begin{align*}
H^{\ov{M}}&=(\Delta_g\psi)^{\ov{T}}\,.
\end{align*}
\end{proposition}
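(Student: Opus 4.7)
The plan is to chain the two inclusions $M_\psi\subseteq\ov{M}\subseteq\R^n$ and reduce the statement to Example \ref{meancurvatureRn} via the Gauss formula applied to the intermediate inclusion $\ov{M}\subseteq\R^n$. The starting point is that, at each $P\in M_\psi$, the tangent space $T_P\R^n$ admits an orthogonal triple decomposition
\begin{align*}
T_P\R^n \,=\, T_P M_\psi \,\oplus\, \bigl(N_P M_\psi \cap T_P\ov{M}\bigr) \,\oplus\, \bigl(T_P\ov{M}\bigr)^{\perp},
\end{align*}
where the middle summand is precisely the fiber of the normal bundle of $M_\psi$ as a submanifold of $\ov{M}$. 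So I would first set up notation for the two projections onto $T\ov{M}$ and onto $NM_\psi$ (inside $T\R^n$), and observe that they commute and that their simultaneous action extracts exactly this middle summand.

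Next I would pick a local $g$-orthonormal frame $(X_1,\dots,X_m)$ on $M_\psi$ and expand both mean-curvature vectors as traces of the respective second fundamental forms. For the intermediate step, the Gauss formula applied to $\ov{M}\subseteq\R^n$ (valid since $X_i$ is tangent to $\ov{M}$) gives
\begin{align*}
\ov{\nabla}^{\ov{M}}_{X_i} X_i \,=\, \bigl(\nabla^{\R^n}_{X_i} X_i\bigr)^{\ov{T}}.
\end{align*}
Taking the component normal to $M_\psi$ inside $T\ov{M}$ and summing in $i$ then yields
\begin{align*}
H^{\ov{M}} \,=\, \sum_{i=1}^{m} \Bigl[\bigl(\nabla^{\R^n}_{X_i} X_i\bigr)^{\ov{T}}\Bigr]^{N_{\ov{M}}}
\,=\, \Bigl[\sum_{i=1}^{m} \bigl(\nabla^{\R^n}_{X_i} X_i\bigr)^{N}\Bigr]^{\ov{T}},
\end{align*}
where in the last equality I use that the $T M_\psi$-part of each $\nabla^{\R^n}_{X_i}X_i$ is killed by the combined projection, while the $NM_\psi$-part survives and its $\ov{T}$-component lands in the middle summand.

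To finish, I would apply Example \ref{meancurvatureRn} directly: the sum in brackets is exactly the mean curvature vector $H^{\R^n}$ of $M_\psi$ regarded as a submanifold of Euclidean space, hence equals $\Delta_g\psi$. Therefore $H^{\ov{M}} = (\Delta_g\psi)^{\ov{T}}$, as claimed. I expect the only real obstacle to be the bookkeeping of the three projections in the triple splitting above — in particular verifying cleanly that, on vectors that already lie in $NM_\psi\subseteq T\R^n$, the projection onto $T\ov{M}$ coincides with the projection onto $N_PM_\psi\cap T_P\ov{M}$; everything else is a routine trace computation leveraging Example \ref{meancurvatureRn}.
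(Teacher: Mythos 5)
Your proposal is correct and follows essentially the same route as the paper: both pass through the identity $\ov{\nabla}^{\ov{M}}_{X_i}X_i=\bigl(\nabla^{\R^n}_{X_i}X_i\bigr)^{\ov{T}}$, use that the projection onto $T\ov{M}$ commutes with the projection onto $(TM_\psi)^\perp$ (which holds precisely because $TM_\psi\subseteq T\ov{M}$), and then invoke Example \ref{meancurvatureRn}. The explicit triple splitting you set up is just a more detailed account of the same commuting-projections bookkeeping the paper uses.
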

\begin{proof}
This follows directly from the fact that the considered orthogonal projections pointwise commute. More precisely, using a local, orthonormal frame $(X_1,...,X_m)$ on $M_\psi$, we have
\begin{align*}
H^{\ov{M}}&=\sum_{i=1}^m \big(\ov{\nabla}_{X_i} X_i\big)^{N}=\sum_{i=1}^m \Bigg(\Big(\nabla^{\R^n}_{X_i} X_i\Big)^{\ov{T}}\Bigg)^{N}\\[10pt]
&\hspace{2cm}=\sum_{i=1}^m \Bigg(\Big(\nabla^{\R^n}_{X_i} X_i\Big)^{N}\Bigg)^{\ov{T}}=\Bigg(\sum_{i=1}^m \Big(\nabla^{\R^n}_{X_i} X_i\Big)^{N}\Bigg)^{\ov{T}}=(\Delta_g\psi)^{\ov{T}}\,.\qedhere
\end{align*}
\end{proof}
\noindent Thereby, we can deduce the minimal surface equation in the Euclidean $n$-sphere, that is, the embedded Riemannian submanifold
\begin{align*}
\mathbb{S}^n:=\Big\{P\in\R^{n+1}: |P|_{\R^{n+1}}=1\Big\}\subseteq\R^{n+1}.
\end{align*}
\begin{theorem}[\cite{Takahashi}]\label{minimalsurfaceequation}
$\psi\colon\Sigma\to \mathbb{S}^n\subseteq \R^{n+1}$ is a minimal immersion into $\mathbb{S}^n$ if and only if
\begin{align*}
\Delta_g \psi=-2\psi\,,
\end{align*}
that is, the coordinate functions of $\psi$ are eigenfunctions of the Laplace-Beltrami operater $\Delta_g$ with eigenvalue 2.
\end{theorem}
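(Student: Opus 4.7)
The plan is to apply Proposition \ref{meancurvaturesubmanifold} to the embedded submanifold $\mathbb{S}^n\subseteq\R^{n+1}$ and then evaluate the tangential projection explicitly using the constraint $|\psi|^2\equiv 1$. The key observation is that at each point $\psi(p)\in\mathbb{S}^n$, the outward unit normal to $\mathbb{S}^n$ in $\R^{n+1}$ is the position vector $\psi(p)$ itself. Hence the orthogonal projection of a vector $V\in\R^{n+1}$ onto $T_{\psi(p)}\mathbb{S}^n$ is $V-\langle V,\psi(p)\rangle\,\psi(p)$, and Proposition \ref{meancurvaturesubmanifold} gives
\begin{align*}
H^{\mathbb{S}^n}=(\Delta_g\psi)^{\ov{T}}=\Delta_g\psi-\langle \Delta_g\psi,\psi\rangle\,\psi\,.
\end{align*}

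Next I would compute the scalar $\langle\Delta_g\psi,\psi\rangle$. Using a local $g$-orthonormal frame $(X_1,X_2)$ on $\Sigma$ and differentiating the identity $\langle\psi,\psi\rangle\equiv 1$ twice in the directions $X_i$, one obtains $\langle X_i\psi,\psi\rangle=0$ and then $\langle X_iX_i\psi,\psi\rangle=-|X_i\psi|^2=-g(X_i,X_i)=-1$, while $\langle(\nabla_{X_i}X_i)\psi,\psi\rangle=\tfrac12(\nabla_{X_i}X_i)\langle\psi,\psi\rangle=0$. Summing over $i=1,2$ yields $\langle\Delta_g\psi,\psi\rangle=-2$. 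Substituting this back produces $H^{\mathbb{S}^n}=\Delta_g\psi+2\psi$, so the condition $H^{\mathbb{S}^n}\equiv 0$ is equivalent to $\Delta_g\psi=-2\psi$.

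Finally, to obtain the eigenfunction reformulation, I would note that $\Delta_g$ acts componentwise on the $\R^{n+1}$-valued map $\psi=(\psi^1,\ldots,\psi^{n+1})$, so the vector identity $\Delta_g\psi=-2\psi$ is equivalent to each coordinate function $\psi^j$ being an eigenfunction of $\Delta_g$ with eigenvalue $2$ (in the geometers' sign convention under which $\Delta_g\psi=H^{\R^n}$ as recorded in Example \ref{meancurvatureRn}).

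The main obstacle here is essentially bookkeeping rather than a deep difficulty: once Proposition \ref{meancurvaturesubmanifold} is in hand, the only nontrivial computation is the evaluation of $\langle\Delta_g\psi,\psi\rangle$, and the only point that deserves care is the consistency of the sign of $\Delta_g$ with the convention used in Example \ref{meancurvatureRn}, so that the same operator appears on both sides of the claimed eigenvalue equation.
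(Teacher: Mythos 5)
Your proposal is correct and follows essentially the same route as the paper: both invoke Proposition \ref{meancurvaturesubmanifold}, identify the normal space to $\mathbb{S}^n$ along $\psi$ with $\textup{span}(\psi)$, and differentiate $|\psi|^2\equiv 1$ twice to evaluate $\langle\Delta_g\psi,\psi\rangle=-|\nabla\psi|^2=-2$. The only cosmetic difference is that the paper first reduces minimality to $\Delta_g\psi=F\psi$ and then determines $F$, whereas you write out the tangential projection $H^{\mathbb{S}^n}=\Delta_g\psi+2\psi$ directly, which amounts to the same computation.
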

\begin{proof}
Let $(X_1,...,X_m)$ be a local, orthonormal frame on $M_\psi$. Then, for all the respective $P\in M_\psi$, we can identify
\begin{align*}
N_P\mathbb{S}^n&\cong\textup{span}(P)
\end{align*}
by regarding $\psi$ as the local vector field $\sum_{k=1}^m\psi^k X_k$. Consequently, by Theorem \ref{meancurvatureRn} and Proposition \ref{meancurvaturesubmanifold}, the immersion $\psi\colon\Sigma\to\mathbb{S}^n\subseteq\R^{n+1}$ is minimal in $\mathbb{S}^n$ if and only if 
there exists some $F\in C^\infty(\Sigma)$ such that
\begin{align*}
\Delta_g \psi=F\psi.
\end{align*}
Furthermore, using that $|\psi|^2\equiv 1$, we have
\begin{align*}
0&=\frac{1}{2} \Delta_g|\psi|^2=\langle\psi, \Delta_g\psi\rangle+ |\nabla\psi|^2=F|\psi|^2+ |\nabla\psi|^2=F+ |\nabla\psi|^2\,.
\end{align*}
Hence,
\begin{align*}
F=-|\nabla\psi|^2=-\sum_{i,k=1}^m X_i\big(\psi^k\big)^2=-\sum_{i,k=1}^m \big(\dup\psi(X_i)^k\big)^2=-\sum_{i,k=1}^m \big(X_i^k\big)^2=-\sum_{i=1}^m |X_i|^2=-m\,,
\end{align*}
and the theorem follows.
\end{proof}
\subsection{The Wedge Product on Euclidean $n$-Space}\label{wedge_product}
In this section we introduce the wedge product on $\R^n$ from a practical perspective (as opposed to the usual more abstract construction) and collect a few properties used in the following sections.

\bigskip
Let $m,\,n\in\Z_{\ge 1}$  with $m\le n$ and set $N:=\binom{n}{m}$. Consider the Euclidean space $\R^N$ and denote its standard basis by 
$\big(e_{i_1 ...i_m}\big)_{\substack{1\le i_1,...,i_m\le n,\\i_1<...<i_m}}$. 
\begin{definition}\label{wedgecoordinates}
The \textit{wedge product} of $m$ vectors in $\R^n$ is the $m$-linear, alternating map
\begin{align*}
\underbrace{\R^n\times ...\times \R^n}_{m\textup{ times}}\to \R^N,\quad (v_1,...,v_m)\mapsto v_1\wedge ...\wedge v_m\,,
\end{align*}
where the $i_1...i_m$-th component is defined as
\begin{align*}
\big(v_1\wedge ...\wedge v_m\big)^{i_1...i_m}:=\det\Bigg(\Big(v_l^{i_k}\Big)_{1\le k,l\le m}\Bigg)\,.
\end{align*}
\end{definition}
\noindent The definition of the wedge product directly implies that $v_1\wedge ...\wedge v_m=0$ whenever the vectors $v_1,..., v_m$ are linearly dependent.
Moreover, given any $v_1,...,v_m,\, w_1,...,w_m\in \R^n$, a computation shows that
\begin{align}
\langle v_1\wedge ...\wedge v_m, w_1\wedge ...\wedge w_m\rangle = \det\Big( \big(\langle v_i, w_j\rangle\big)_{1\le i,j\le n}\Big)\,.\label{wedgescalarproduct}
\end{align}
Particularly, we have $|v_1\wedge ...\wedge v_m|=1$ if $|v_1|=...=|v_m|=1$. 
Vectors of the form $v_1\wedge ...\wedge v_m$ are often referred to as $m$-\textit{vectors}. Using the wedge product, we have
\begin{align*}
e_{i_1 ...i_m}=e_{i_1}\wedge ...\wedge e_{i_m}\,.
\end{align*}
Hence, the description of Euclidean space $\R^N$ can be seen to arise from the abstract construction by linear combinations of the $m$-vectors $e_{i_1}\wedge ...\wedge e_{i_m}$ for which the scalar product is defined by the linear extension of \eqref{wedgescalarproduct}. In a broader context, this point of view exactly corresponds to the identification of $\R^N$ with the $m$-th component $\Lambda^m\R^n$ of the exterior algebra of $\R^n$.
\begin{definition}
Let $v_1\wedge ...\wedge v_m\in\Lambda^m\R^n$, $m\ge 1$. Then, its \textit{Hodge dual} $*(v_1\wedge ...\wedge v_m)\in \Lambda^{n-m}\R^n$ is defined to be the unique vector such that
\begin{align*}
\langle w_1\wedge ...\wedge w_{n-m},*(v_1\wedge ...\wedge v_m)\rangle=\det(v_1,...,v_m,w_1,...,w_{n-m})
\quad\textup{for all }w_1,...,w_m\in \R^n\,.
\end{align*}
\end{definition}
\noindent The Hodge dual $*(v_1\wedge ...\wedge v_{n-1})\in \Lambda^1\R^n=\R^n$ of $n-1$ linearly independent vectors $v_1,...,\,v_{n-1}\in\R^n$  has the following properties:
\begin{enumerate}[label=(\roman*)]
\item $*(v_1\wedge ...\wedge v_{n-1})=\sum_{i=1}^{n} \det(v_1,...,v_{n-1}, E_{i}) E_i$ for every orthonormal basis $(E_1,...,E_n)$ of $\R^n$;
\item $*(v_1\wedge ...\wedge v_{n-1})\in \textup{span}(v_1,...,v_n)^\perp$;
\item the family $\big(v_1,...,v_{n-1},*(v_1\wedge ...\wedge v_{n-1})\big)$ is positively oriented;
\item if $|v_1|=...=|v_{n-1}|=1$, then $|*(v_1\wedge ...\wedge v_{n-1})|=1$.
\end{enumerate}
Given $v_1,....,v_n\in \R^n$, then $*(v_1\wedge...\wedge v_n)\in\Lambda^0\R^n=\R$ is given by
\begin{align*}
*(v_1\wedge...\wedge v_n)=\det(v_1..., v_n)\,.
\end{align*}


\subsection{Bipolar Surface of a Minimal Surface in the 3-Sphere}
This section is included for the sake of self-containment and can be skipped by readers being familiar with the corresponding parts in \cite{Lawson}.

\bigskip
The aim of this section is to recap the derivations in \citep{Lawson} for immersed surfaces (i.e.\ two-dimensional, immersed submanifolds) in $\s^n$, more concretely in $\s^3$ and $\s^5$. In the following, let $M_\psi\subseteq\mathbb{S}^n$ be a surface described by an immersion $\psi\colon \Sigma\to\mathbb{S}^n$. The two-dimensional setup always allows to use a conformal atlas for $\Sigma$ such that with respect to local coordinates $\big(x^1,x^2\big)$, or with respect to the associated local frame $(\partial_1, \partial_2)$, the metric induced by $\psi$ is of the form
\begin{align}
g_{ij}:=2\lambda \delta_{ij},\quad i,j\in\{1,2\}\,,\label{conformality}
\end{align}
with a smooth $\lambda:\Sigma\to (0,\infty)$. In this case, the Gauss curvature of $(\Sigma,g)$ is given by
\begin{align}
K=-\frac{1}{4\lambda}\Delta \log(\lambda)\,.\label{Gauss_curvature}
\end{align}
In terms of the induced local frame $(\partial_1\psi, \partial_2\psi)$ on $M_\psi$  (i.e. $\partial_i\psi\colon=d\psi(\partial_i)$, $i=1,\,2$), equation \eqref{conformality} reads as
\begin{align*}
||\partial_1\psi||=||\partial_2\psi||=\sqrt{2\lambda}\quad\textup{and}\quad\langle\partial_1\psi,\partial_2\psi\rangle=0\,.
\end{align*}
Together with Proposition \ref{minimalsurfaceequation}, this shows that $\psi$ is minimal in $\mathbb{S}^n\subseteq\R^{n+1}$ if and only if
\begin{align}
\Delta \psi=-4\lambda\psi\,.\label{minimal_surface_eq_S3}
\end{align}
Now, by possibly passing to the orientable double cover, we can assume that $\Sigma$ is oriented.
In other words, $M_\psi$ can always be regarded as a conformally immersed Riemann surface $\Sigma$. In this context, we view local coordinates $\big(x^1, x^2\big)$ as one local complex coordinate $z=x^1+ix^2$ and define the local vector fields
\begin{align*}
\partial:=\frac{1}{2}(\partial_1-i\partial_2),\quad \ov{\partial}:=\frac{1}{2}(\partial_1+i\partial_2)\,. 
\end{align*}
and, correspondingly, the local vector fields $\partial\psi$ and $\ov{\partial}\psi$ on $M_\psi$. In these terms, the condition of conformality is expressed by
\begin{align}
\langle \partial\psi,\partial\psi\rangle=\langle\ov{\partial}\psi,\ov{\partial}\psi\rangle=0\quad\textup{and}\quad\langle\partial\psi,\ov{\partial}\psi\rangle=\lambda\,.\label{conformality_complex}
\end{align}
This completes the setup and notation we will use throughout the following.

\bigskip
At this point, we consider the codimension-1 case, i.e., immersed surfaces in $\s^3\subseteq\R^4$.
\begin{definition}\label{unit_normal}
Recalling the Hodge dual from Section \ref{wedge_product}, a unit normal vector field along $M_\psi$ that is tangent to $\mathbb{S}^3\subseteq\R^4$, can be defined by
\begin{align*}
\nu:=*\Bigg(\psi\wedge \frac{1}{\sqrt{2\lambda}}\partial_1\psi \wedge \frac{1}{\sqrt{2\lambda}}\partial_2\psi\Bigg)
=*\Bigg(\frac{1}{i\lambda}\psi\wedge\partial\psi\wedge\ov{\partial}\psi\Bigg)\,.
\end{align*}
\end{definition}
\begin{remark}
By using the components from Definition \ref{unit_normal} with respect to an orthonormal frame on $\R^4$, we consider the unit normal as a map $\nu\colon\Sigma\to \s^3\subseteq\R^4$, called the \textit{Gauss map} of $\psi$.
\end{remark}
\noindent The second fundamental form $B$ of $M_\psi$ is determined by the functions
\begin{align*}
\beta_{ij}:=\langle B_{ij}, \nu\rangle,\quad i,j\in\{1,2\}\,,
\end{align*}
where the $B_{ij}$ denote the components of $B$ with respect to local, conformal coordinates $\big(x^1,x^2)$. Since the latter are given by
\begin{align*}
B_{ij}^{\s^3}&=\Big(B_{ij}^{\R^4}\Big)^{||}\\
&=B_{ij}^{\R^4}-\langle B_{ij}^{\R^4}, \psi\rangle\psi\\
&=\Bigg[\partial_i\partial_j\psi - \frac{1}{2\lambda}\sum_{k=1,2} \langle\partial_i\partial_j \psi, \partial_k\psi\rangle\partial_k\psi\Bigg]-
\Bigg\langle \Bigg[\partial_i\partial_j\psi - \frac{1}{2\lambda}\sum_{k=1,2} \langle\partial_i\partial_j \psi, \partial_k\psi\rangle\partial_k\psi\Bigg],\psi\Bigg\rangle \psi\\
&=\partial_i\partial_j\psi - \frac{1}{2\lambda}\sum_{k=1,2} \langle\partial_i\partial_j \psi, \partial_k\psi\rangle\partial_k\psi-
\langle \partial_i\partial_j\psi,\psi\rangle\psi\\
&=\partial_i\partial_j\psi - \frac{1}{2\lambda}\sum_{k=1,2} \langle\partial_i\partial_j \psi, \partial_k\psi\rangle\partial_k\psi-
\lambda\delta_{ij}\psi,\quad i,j\in\{1,2\}\,, 
\end{align*}
the considerations from Section \ref{wedge_product} yield that
\begin{align*}
\beta_{ij}&=\det\Bigg(\psi,\frac{1}{\sqrt{2\lambda}}\partial_1\psi,\frac{1}{\sqrt{2\lambda}}\partial_2\psi, B_{ij}\Bigg)\\
&=\det\Bigg(\psi,\frac{1}{\sqrt{2\lambda}}\partial_1\psi,\frac{1}{\sqrt{2\lambda}}\partial_2\psi, \partial_i\partial_j\psi\Bigg)\\
&=*\Bigg(\psi \wedge \frac{1}{\sqrt{2\lambda}}\partial_1\psi \wedge \frac{1}{\sqrt{2\lambda}}\partial_2\psi\wedge \partial_i\partial_j\psi\Bigg)\,,
\end{align*}
or, in short
\begin{align*}
\beta_{ij}=\langle \nu,\partial_i\partial_j\psi\rangle\,.
\end{align*}
In this setting, the Gauss equation (cf.\ Proposition \ref{Gauss_equation}) writes as
\begin{align}
4\lambda^2(1-K)=\beta_{12}^2-\beta_{11}\beta_{22}.\label{Gauss_equation_codim1}
\end{align}
Moreover, the Weingarten equation (cf.\ Proposition \ref{Weingarten_equation}) takes the form
\begin{align}
\partial_i\nu =-\frac{1}{2\lambda}\sum_{k=1,2} \beta_{ik} \partial_k\psi,\quad i=1,2\,. \label{Weingarten_S3}
\end{align}
Now, the following lemma allows to characterize minimal surfaces in $\s^3$ by the means of a associated holomorphic function.
\begin{lemma}[\cite{Lawson}, Lemmata 1.2 and 1.4]\label{lemma_varphi}
If $\psi\colon\Sigma\to\s^3\subseteq\R^4$ is minimal, then the function
\begin{align}
\varphi:=\frac{1}{2}(\beta_{11}-i\beta_{12})=*\Bigg(\frac{1}{i\lambda} \psi\wedge \partial\psi\wedge\ov{\partial}\psi\wedge\partial^2\psi\Bigg)\label{varphi}
\end{align} 
is holomorphic on $\Sigma$. Moreover, we have
\begin{align}
|\varphi|^2=\lambda^2(1-K)\,.\label{varphi_abs}
\end{align}
Hence, if $\psi$ is minimal, then the Gauss curvature $K$ satisfies
\begin{align*}
K\le 1\quad\textup{and}\quad K=1\textup{ exactly at the isolated zeros of } \varphi\,.
\end{align*}
\end{lemma}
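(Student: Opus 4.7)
The plan is to verify first the alternative expression for $\varphi$ in terms of the Hodge dual, then to establish holomorphicity by a direct $\ov{\partial}$-computation, and finally to deduce the identity $|\varphi|^2=\lambda^2(1-K)$ algebraically from the Gauss equation combined with minimality. The assertions about the Gauss curvature $K$ then follow from the fact that a nonconstant holomorphic function on a Riemann surface has isolated zeros.

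For the reformulation, the key observation is that minimality in conformal coordinates means $\beta_{11}+\beta_{22}=0$, so
\[
\langle \nu,\partial^2\psi\rangle = \tfrac{1}{4}\bigl(\beta_{11}-2i\beta_{12}-\beta_{22}\bigr) = \tfrac{1}{2}(\beta_{11}-i\beta_{12}) = \varphi.
\]
Combined with the formula $\nu=*\bigl(\tfrac{1}{i\lambda}\psi\wedge\partial\psi\wedge\ov{\partial}\psi\bigr)$ from Definition \ref{unit_normal} and the identity $\langle *(a\wedge b\wedge c),d\rangle=*(a\wedge b\wedge c\wedge d)$ in $\R^4$, which is immediate from the Hodge dual properties listed in Section \ref{wedge_product}, this yields the stated $4$-vector formula for $\varphi$.

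For holomorphicity I would expand $\ov{\partial}\varphi=\langle\ov{\partial}\nu,\partial^2\psi\rangle+\langle\nu,\ov{\partial}\partial^2\psi\rangle$ and argue that both terms vanish. The second term is handled using $\ov{\partial}\partial\psi=\tfrac14\Delta\psi=-\lambda\psi$ from \eqref{minimal_surface_eq_S3}, which gives $\ov{\partial}\partial^2\psi=\partial\ov{\partial}\partial\psi=-(\partial\lambda)\psi-\lambda\partial\psi$; this vector is orthogonal to $\nu$ since $\nu\perp\psi$ and $\nu$ is normal to $M_\psi$. For the first term, Weingarten \eqref{Weingarten_S3} together with the symmetry of $\beta$ and $\beta_{22}=-\beta_{11}$ collapses the sum into $\ov{\partial}\nu=-\tfrac{1}{\lambda}\ov{\varphi}\,\partial\psi$, so that $\langle\ov{\partial}\nu,\partial^2\psi\rangle$ is proportional to $\langle\partial\psi,\partial^2\psi\rangle$, which vanishes upon differentiating the conformality relation $\langle\partial\psi,\partial\psi\rangle=0$.

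For the norm identity, expanding gives $|\varphi|^2=\tfrac14(\beta_{11}^2+\beta_{12}^2)$, and minimality $\beta_{22}=-\beta_{11}$ turns the right-hand side of \eqref{Gauss_equation_codim1} into $\beta_{11}^2+\beta_{12}^2$, so $|\varphi|^2=\lambda^2(1-K)$. Consequently $K\le 1$ everywhere, with equality exactly on the zero set of $\varphi$, which, by holomorphicity, is either all of $\Sigma$ or a discrete set. I do not anticipate a serious obstacle here; the only delicate bookkeeping is arranging the Weingarten sum so that minimality and symmetry produce the collapse $\ov{\partial}\nu\parallel\partial\psi$, which is precisely the algebraic identity that kills $\ov{\partial}\varphi$.
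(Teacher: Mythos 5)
Your proof is correct. The reformulation of $\varphi$ as $*\big(\frac{1}{i\lambda}\psi\wedge\partial\psi\wedge\ov{\partial}\psi\wedge\partial^2\psi\big)$ and the identity $|\varphi|^2=\lambda^2(1-K)$ follow essentially the paper's route: minimality gives $\beta_{22}=-\beta_{11}$, which is fed into the Gauss equation \eqref{Gauss_equation_codim1}. Your holomorphicity argument, however, is genuinely different. The paper computes a Gram determinant to get $\varphi^2=\langle\partial^2\psi,\partial^2\psi\rangle$, shows $\ov{\partial}(\varphi^2)=2\langle\partial(\partial\ov{\partial}\psi),\partial^2\psi\rangle=-2\langle\partial(\lambda\psi),\partial^2\psi\rangle=0$, and then divides by $2\varphi$, which forces a separate case distinction at the zeros of $\varphi$ (handled there by an appeal to umbilicity whose justification — that the second fundamental form vanishes on a whole neighbourhood of an umbilic point — is itself shaky as written). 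You instead differentiate $\varphi=\langle\nu,\partial^2\psi\rangle$ directly: the term $\langle\nu,\ov{\partial}\partial^2\psi\rangle$ vanishes because, by \eqref{minimal_surface_eq_S3}, $\ov{\partial}\partial^2\psi=\partial(-\lambda\psi)=-(\partial\lambda)\psi-\lambda\partial\psi$ lies in $\textup{span}(\psi,\partial\psi)$, which is orthogonal to $\nu$; and the term $\langle\ov{\partial}\nu,\partial^2\psi\rangle$ vanishes because the Weingarten equation \eqref{Weingarten_S3} together with $\beta_{22}=-\beta_{11}$ collapses to $\ov{\partial}\nu=-\frac{\ov{\varphi}}{\lambda}\partial\psi$ (the conjugate of the relation the paper only derives later in \eqref{pnu_2}), while $\langle\partial\psi,\partial^2\psi\rangle=\frac{1}{2}\partial\langle\partial\psi,\partial\psi\rangle=0$ by conformality. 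This buys you a uniform argument with no division by $\varphi$ and no case distinction at its zeros, and it is not circular since \eqref{Weingarten_S3} is available before the lemma. The final deduction — $K\le 1$ with equality exactly on the zero set of $\varphi$, which is discrete unless $\varphi\equiv 0$ — is the same in both treatments.
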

\begin{proof}
At first, we have
\begin{align*}
\beta_{22}=-\beta_{11}
\end{align*}
as $\psi$ is supposed to be minimal. Together with the Gauss equation \eqref{Gauss_equation_codim1}, this shows
\begin{align*}
|\varphi|^2=\frac{1}{4}\big(\beta_{11}^2+\beta_{12}^2\big)=\frac{1}{4}\big(\beta_{12}^2-\beta_{11}\beta_{22}\big)=\lambda^2(1-K)\,.
\end{align*}
To continue, since
\begin{align*}
\partial^2\psi=\frac{1}{4}\left(\partial_1^2\psi-\partial_2^2 \psi\right)-\frac{i}{2}\partial_1\partial_2\psi
\end{align*}
and
\begin{align*}
\partial_1^2\psi=4\lambda\psi-\partial_2^2\psi\,,
\end{align*}
by the minimal surface equation \eqref{minimal_surface_eq_S3}, we have
\begin{align*}
\frac{1}{2}\big(\partial_1^2\psi-i\partial_1\partial_2\psi\big)&= \frac{1}{4}\big(\partial_1^2\psi-\partial_2^2\psi-2i\partial_1\partial_2\psi\big)+\lambda \psi
=\partial^2\psi+\lambda\psi\,,
\end{align*}
implying that
\begin{align*}
\varphi&=\frac{1}{2}(\beta_{11}-i\beta_{12})\\
&=*\Bigg(\frac{1}{2\lambda} \psi\wedge\partial_1\psi\wedge\partial_2\psi\wedge\frac{1}{2}\big(\partial_1^2\psi-i\partial_1\partial_2\psi\big)\Bigg)\\
&=*\Bigg(\frac{1}{i\lambda} \psi\wedge\partial\psi\wedge\ov{\partial}\psi\wedge\big(\partial^2\psi+\lambda\psi\big)\Bigg)\\
&=*\Bigg(\frac{1}{i\lambda} \psi\wedge\partial\psi\wedge\ov{\partial}\psi\wedge \partial^2\psi\Bigg)\,.
\end{align*}
It remains to show that $\varphi$ is holomorphic, i.e., locally holomorphic around each point.
First, if $\varphi(p)=0$ for some $p\in\Sigma$, then we have that $K(p)=1$ by \eqref{varphi_abs}, that is, $\psi(p)$ is an umbilic point. Therefore, the second fundamental form and hence $\varphi$ itself vanish on a neighbourhood around $p$. So, particularly, $\varphi$ is holomorphic around $p$ in that case. Else, we use that
\begin{align}
\begin{aligned}
\langle\psi,\psi\rangle&=1\,,\\
\langle\partial\psi,\ov{\partial}\psi\rangle&=\lambda\,,\\
\langle \partial^i \psi,\partial^j \psi\rangle&=\langle \ov{\partial}^i \psi,\ov{\partial}^j \psi\rangle=0\quad\textup{for all } i,\, j\textup{ with } 1\le i+j\le 3\,.
\end{aligned}\label{scalar_products}
\end{align}
and compute
\begin{align*}
\varphi^2&=-\frac{1}{\lambda^2}\det\big(\psi,\partial\psi,\ov{\partial}\psi, \partial^2\psi\big)^2\\
&=-\frac{1}{\lambda^2}\det\left(
\begin{pmatrix}
\langle\psi,\psi\rangle &\langle\psi,\partial\psi\rangle &\langle\psi,\ov{\partial}\psi\rangle &\langle\psi,\partial^2\psi\rangle\\
\langle\partial\psi,\psi\rangle &\langle\partial\psi,\partial\psi\rangle &\langle\partial\psi,\ov{\partial}\psi\rangle &\langle\partial\psi,\partial^2\psi\rangle\\
\langle\ov{\partial}\psi,\psi\rangle &\langle\ov{\partial}\psi,\partial\psi\rangle &\langle\ov{\partial}\psi,\ov{\partial}\psi\rangle &\langle\ov{\partial}\psi,\partial^2\psi\rangle\\
\langle\partial^2\psi,\psi\rangle &\langle\partial^2\psi,\partial\psi\rangle &\langle\partial^2\psi,\ov{\partial}\psi\rangle &\langle\partial^2\psi,\partial^2\psi\rangle
\end{pmatrix}\right)\\
&=\langle\partial^2\psi,\partial^2\psi\rangle\,.
\end{align*}
Therewith, we find
\begin{align*}
2\varphi\cdot\ov{\partial}\varphi
&=\ov{\partial}\varphi^2\\
&=\ov{\partial}\langle\partial^2\psi,\partial^2\psi\rangle\\
&=2\langle \partial\big(\partial\ov{\partial}\psi\big),\partial^2\psi\rangle\\
&\overset{\eqref{minimal_surface_eq_S3}}{=}-2\langle \partial(\lambda\psi),\partial^2\psi\rangle\\
&=-2(\partial\lambda)\langle\psi,\partial^2\psi\rangle-2\lambda\langle\psi,\partial^2\psi\rangle\\
&=0\,,
\end{align*}
implying that $\ov{\partial}\varphi=0$ if $\varphi$ does not vanish on the considered local chart. Hence, also in this case, we have that $\varphi$ is locally holomorphic, which completes the proof.
\end{proof}
\begin{remark}\label{quadratic_differential}
As we will see later on (cf. \eqref{pnu_2}), we have
\begin{align*}
\varphi=-\langle\partial\psi,\partial\nu\rangle
\end{align*}
Hence, under a conformal change of the complex coordinate $z=x_1+ix_2$ to $w=y_1+iy_2$, we have
\begin{align*}
\varphi(z)=\varphi(w)\Bigg(\frac{\dup w}{\dup z}\Bigg)^2\,.
\end{align*}
Together with the property that $\varphi$ is holomorphic, $\varphi$ therefore defines a holomorphic quadratic differential $\omega:=\varphi \dup z^2$ on $\Sigma$, the so-called Hopf differential. As shown in \cite{Lawson}, this allows a further characterization of (closed) minimal surfaces in $\s^3$, particularly that the real projective plane $\R P^2$ cannot be minimally immersed into $\s^3$.
\end{remark}
\begin{lemma}[\cite{Lawson}, Remark 1.3]
If $\psi\colon\Sigma\to\s^3\subseteq\R^4$ is minimal, then the vector field 
$$\Phi:=(B_{11}-iB_{22})=\varphi\nu$$
satisfies
\begin{align}
\Phi=\lambda\partial\Bigg(\frac{1}{\lambda}\partial\psi\Bigg)\label{Phi1}
\end{align}
and
\begin{align}
\ov{\partial}\Phi=-(1-K)\lambda\partial\psi\,.\label{Phi2}
\end{align}
\end{lemma}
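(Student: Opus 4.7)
The plan is to prove both identities by direct computation in the moving frame $(\psi,\partial\psi,\ov{\partial}\psi,\nu)$ of $\R^4$ along $\psi$, using the minimal surface equation \eqref{minimal_surface_eq_S3}, the scalar product relations \eqref{scalar_products}, and the Weingarten equation \eqref{Weingarten_S3}.

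For \eqref{Phi1}, I would first expand
\begin{align*}
\lambda\partial\Bigg(\frac{1}{\lambda}\partial\psi\Bigg)=\partial^{2}\psi-\frac{\partial\lambda}{\lambda}\partial\psi\,,
\end{align*}
so that the claim reduces to the decomposition $\partial^{2}\psi=\tfrac{\partial\lambda}{\lambda}\partial\psi+\varphi\nu$. The coefficients of this decomposition can be read off by pairing $\partial^{2}\psi$ with each of the four frame vectors: the relations \eqref{scalar_products} together with $\partial\ov{\partial}\psi=\tfrac{1}{4}\Delta\psi=-\lambda\psi$ and the identities obtained from differentiating $\langle\partial\psi,\psi\rangle=0$ and $\langle\partial\psi,\ov{\partial}\psi\rangle=\lambda$ should yield $\partial\lambda/\lambda$ along $\partial\psi$, zero along $\ov{\partial}\psi$, and zero along $\psi$. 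For the normal coefficient I would rewrite
\begin{align*}
\langle\partial^{2}\psi,\nu\rangle=\frac{1}{4}\bigl(\beta_{11}-2i\beta_{12}-\beta_{22}\bigr)=\frac{1}{2}\bigl(\beta_{11}-i\beta_{12}\bigr)=\varphi\,,
\end{align*}
where minimality $\beta_{22}=-\beta_{11}$ is used in the middle step. Substituting the decomposition back gives \eqref{Phi1}.

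For \eqref{Phi2} I would use that \eqref{Phi1} already identifies $\Phi=\varphi\nu$, together with the holomorphicity of $\varphi$ from Lemma \ref{lemma_varphi}, so that
\begin{align*}
\ov{\partial}\Phi=\bigl(\ov{\partial}\varphi\bigr)\nu+\varphi\,\ov{\partial}\nu=\varphi\,\ov{\partial}\nu\,.
\end{align*}
It remains to compute $\ov{\partial}\nu=\tfrac{1}{2}(\partial_{1}+i\partial_{2})\nu$ via the Weingarten equation \eqref{Weingarten_S3}. Grouping terms and using $\beta_{21}=\beta_{12}$ as well as $\beta_{22}=-\beta_{11}$, the four-term expansion should factor as
\begin{align*}
\ov{\partial}\nu=-\frac{1}{4\lambda}(\beta_{11}+i\beta_{12})(\partial_{1}\psi-i\partial_{2}\psi)=-\frac{\ov{\varphi}}{\lambda}\partial\psi\,.
\end{align*}
Inserting this and applying \eqref{varphi_abs} then gives
\begin{align*}
\ov{\partial}\Phi=-\frac{|\varphi|^{2}}{\lambda}\partial\psi=-(1-K)\lambda\partial\psi\,,
\end{align*}
which is \eqref{Phi2}.

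The only mildly delicate point is bookkeeping: the collapse of $\ov{\partial}\nu$ into a single product proportional to $\ov{\varphi}\,\partial\psi$ is not accidental but relies crucially on the minimality condition $\beta_{22}=-\beta_{11}$, which converts the generic Weingarten expression into a Cauchy-Riemann-type combination. Once that step is set up carefully, both \eqref{Phi1} and \eqref{Phi2} reduce to routine algebra.
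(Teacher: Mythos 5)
Your argument is correct, and both identities do come out as you claim, but your route is genuinely different from the paper's. For \eqref{Phi1} the paper starts from $\Phi=\varphi\nu$ written as a product of two Hodge duals and evaluates it via property (i) of the Hodge dual, reducing everything to a $4\times 4$ determinant of scalar products from \eqref{scalar_products}, which collapses to $\partial^2\psi-\tfrac{\partial\lambda}{\lambda}\partial\psi$; you instead go the other way, decomposing $\partial^2\psi$ in the frame $(\psi,\partial\psi,\ov{\partial}\psi,\nu)$ and identifying the normal coefficient $\langle\partial^2\psi,\nu\rangle=\tfrac14(\beta_{11}-2i\beta_{12}-\beta_{22})=\varphi$ via minimality. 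For \eqref{Phi2} the difference is more substantive: the paper again computes $\varphi\ov{\partial}\nu$ by a Hodge-dual determinant, obtaining the coefficient $-\lambda-\tfrac{\partial\ov{\partial}\lambda}{\lambda}$ and then invoking the \emph{intrinsic} curvature formula \eqref{Gauss_curvature}, whereas you compute $\ov{\partial}\nu=-\tfrac{\ov{\varphi}}{\lambda}\partial\psi$ directly from the Weingarten equation \eqref{Weingarten_S3} (this is the conjugate of the identity the paper only derives later as \eqref{pnu_2}, so there is no circularity) and then use the \emph{extrinsic} relation $|\varphi|^2=\lambda^2(1-K)$ from Lemma \ref{lemma_varphi}. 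Your version avoids determinant bookkeeping and makes the Cauchy--Riemann structure of the Weingarten map visible; the paper's version is more uniform with the surrounding Hodge-dual computations and does not need \eqref{varphi_abs} at this point. Two small things to tighten if you write it out: justify that $(\psi,\partial\psi,\ov{\partial}\psi,\nu)$ spans $\C^4$ wherever $\lambda>0$ and that the coefficient along $\partial\psi$ is read off by pairing with $\ov{\partial}\psi$ and dividing by $\lambda$ (the frame is not orthonormal), and note that the displayed definition $\Phi=(B_{11}-iB_{22})$ in the statement is off by normalization from $\varphi\nu$ --- like the paper's own proof, you correctly work with $\Phi=\varphi\nu$ as the operative definition.
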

\begin{proof}
Denote by $(E_1,E_2,E_3,E_4)$ an orthonormal frame on $\R^4$. To show that the first equation holds, we use \eqref{varphi} as well as \eqref{scalar_products} and compute
\begin{align*}
\Phi&=\varphi\nu\\
&=*\Bigg(\frac{1}{i\lambda}\psi\wedge\partial\psi\wedge\ov{\partial}\psi\wedge\partial^2\psi\Bigg)*\Bigg(\frac{1}{i\lambda}\psi\wedge\partial\psi\wedge\ov{\partial}\psi\Bigg)\\
&=\det\Bigg(\frac{1}{i\lambda}\psi,\partial\psi,\ov{\partial}\psi,\partial^2\psi\Bigg)\sum_{i=1}^4 \det\Bigg(\frac{1}{i\lambda}\psi,\partial\psi,\ov{\partial}\psi,E_i\Bigg)E_i\\
&=-\frac{1}{\lambda^2}\sum_{i=1}^4 
\det\left(
\begin{pmatrix}
1 & 0 & 0 & \psi^i\\
0 & 0 &\lambda &\partial\psi^i\\
0 &\lambda & 0 &\ov{\partial}\psi^i\\
0 & 0 &\partial\lambda &\partial^2\psi^i
\end{pmatrix}
\right)
E_i\\
&=-\frac{1}{\lambda}(\partial\lambda)\partial\psi+\partial^2\psi\\
&=\lambda\partial\Bigg(\frac{1}{\lambda}\partial\psi\Bigg)\,.
\end{align*}
For the second equation, we use that $\varphi$ is holomorphic, i.e, $\ov{\partial}\varphi=0$, and obtain
\begin{align*}
\ov{\partial}\Phi&=\ov{\partial}(\varphi\nu)\\
&=\varphi\ov{\partial}\nu\\
&=*\Bigg(\frac{1}{i\lambda}\psi\wedge\partial\psi\wedge\ov{\partial}\psi\wedge\partial^2\psi\Bigg)*\Bigg(\frac{1}{i\lambda}\psi\wedge\partial\psi\wedge\ov{\partial}^2\psi\Bigg)\\
&=-\frac{1}{\lambda^2}\sum_{i=1}^4 
\det\left(
\begin{pmatrix}
1 & 0 & 0 & \psi^i\\
0 & 0 &\lambda &\partial\psi^i\\
0 &\lambda & 0 &\ov{\partial}\psi^i\\
0 & 0 &\lambda^2+\partial\ov{\partial}\lambda &\ov{\partial}^2\psi^i
\end{pmatrix}
\right)
E_i\\
&=\Bigg(-\lambda-\frac{\partial\ov{\partial}\lambda}{\lambda}\Bigg)\partial\psi\\
&\overset{\eqref{Gauss_curvature}}{=}-(1-K)\lambda\partial\psi\,.
\end{align*}
\end{proof}
\begin{lemma}[\cite{Lawson}, p.360]\label{polar_surface_conformal}
If $\psi\colon\Sigma\to\s^3\subseteq\R^4$ is minimal, then its Gauss map $\nu\colon\Sigma\to \s^3\subseteq\R^4$ is almost conformal with induced metric 
\begin{align*}
g^{\nu}=(1-K)g\,.
\end{align*}
%
Particularly, $\nu$ has singularities precisely at the isolated points where $K=1$. 
\end{lemma}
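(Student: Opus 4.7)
The plan is to pass the Weingarten equation \eqref{Weingarten_S3} through the complex formalism of \eqref{conformality_complex} and then invoke the identity $|\varphi|^2=\lambda^2(1-K)$ from Lemma \ref{lemma_varphi} to read off the conformal factor. The key observation is that under the assumption of minimality, the two real components of the second fundamental form collapse into the single complex quantity $\varphi$, and consequently $\partial\nu$ must be proportional to $\overline\partial\psi$ alone (rather than a linear combination of $\partial\psi$ and $\overline\partial\psi$).

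Concretely, I would first rewrite \eqref{Weingarten_S3} in terms of the Wirtinger operators. Using $\partial=\tfrac12(\partial_1-i\partial_2)$ together with $\beta_{22}=-\beta_{11}$, a short computation gives
\begin{align*}
\partial\nu
=-\frac{1}{4\lambda}\bigl[(\beta_{11}-i\beta_{12})\partial_1\psi+(\beta_{12}+i\beta_{11})\partial_2\psi\bigr]
=-\frac{1}{4\lambda}(\beta_{11}-i\beta_{12})(\partial_1\psi+i\partial_2\psi)
=-\frac{\varphi}{\lambda}\,\overline{\partial}\psi,
\end{align*}
where the last step uses the definition \eqref{varphi} of $\varphi$ and the definition of $\overline\partial$. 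Taking complex conjugates (or repeating with the real and imaginary parts swapped) then yields $\overline\partial\nu=-\tfrac{\overline\varphi}{\lambda}\partial\psi$.

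From these two formulas, together with the orthogonality relations in \eqref{conformality_complex}, the induced Hermitian-type inner products of $\nu$ follow at once:
\begin{align*}
\langle\partial\nu,\partial\nu\rangle=\frac{|\varphi|^2}{\lambda^2}\langle\overline\partial\psi,\overline\partial\psi\rangle=0,\qquad
\langle\overline\partial\nu,\overline\partial\nu\rangle=0,\qquad
\langle\partial\nu,\overline\partial\nu\rangle=\frac{|\varphi|^2}{\lambda^2}\langle\overline\partial\psi,\partial\psi\rangle=\frac{|\varphi|^2}{\lambda}.
\end{align*}
Comparing with \eqref{conformality_complex}, this shows that away from the zero set of $\varphi$, $\nu$ is conformal with conformal factor $|\varphi|^2/\lambda$. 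Applying \eqref{varphi_abs} from Lemma \ref{lemma_varphi}, this conformal factor equals $\lambda(1-K)$, and therefore $g^\nu=(1-K)g$ on all of $\Sigma$.

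Finally, the singular set of $\nu$ is precisely where this conformal factor vanishes, i.e.\ where $K=1$, and by the second assertion of Lemma \ref{lemma_varphi} these points form an isolated set (being the zeros of the nontrivial holomorphic function $\varphi$; the case $\varphi\equiv0$ corresponds to a totally umbilical, hence totally geodesic, piece which can be excluded unless $\psi$ is a great two-sphere). The only point requiring minor care is the passage from the real Weingarten equation to the complex form—everything else is bookkeeping—so I expect no substantive obstacle.
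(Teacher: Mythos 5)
Your proof is correct, and it arrives at exactly the same key identity as the paper, namely $\partial\nu=-\tfrac{\varphi}{\lambda}\ov{\partial}\psi$ (the paper's \eqref{pnu_2}), after which the computation of $\langle\partial\nu,\partial\nu\rangle$, $\langle\ov{\partial}\nu,\ov{\partial}\nu\rangle$ and $\langle\partial\nu,\ov{\partial}\nu\rangle$ is identical. The difference lies in how that identity is obtained. The paper differentiates the Hodge-dual definition of $\nu$, invokes the minimal surface equation and the auxiliary vector field $\Phi=\varphi\nu=\lambda\partial\big(\tfrac{1}{\lambda}\partial\psi\big)$, and only then lands on \eqref{pnu_2}; this longer route has the side benefit of producing the intermediate formula \eqref{pnu_1}, which is reused verbatim in the proof that the polar variety is again minimal (Theorem \ref{polar_variety}). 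You instead substitute directly into the real Weingarten equation \eqref{Weingarten_S3}, use the trace-free condition $\beta_{22}=-\beta_{11}$ to factor the resulting expression as $-\tfrac{1}{4\lambda}(\beta_{11}-i\beta_{12})(\partial_1\psi+i\partial_2\psi)$, and recognize $\varphi$ and $\ov{\partial}\psi$; your algebra checks out and is arguably the more elementary and self-contained derivation for this lemma in isolation (it is also consistent with the paper's Remark \ref{quadratic_differential}, where $\varphi=-\langle\partial\psi,\partial\nu\rangle$ is read off from the same identity). Your closing caveat about the totally umbilic case $\varphi\equiv 0$ is a reasonable clarification of the phrase ``isolated points where $K=1$'' that the paper leaves implicit.
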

\begin{proof}
At first, we have
\begin{align}
\partial\nu&=\partial*\Bigg(\frac{1}{i\lambda}\psi\wedge\partial\psi\wedge\ov{\partial}\psi\Bigg)\notag\\
&=*\Bigg[\partial\Bigg(\frac{1}{i\lambda}\psi\Bigg)\wedge\partial\psi\wedge\ov{\partial}\psi
+\frac{1}{i\lambda} \psi\wedge\partial^2\psi\wedge\ov{\partial}\psi+\frac{1}{i\lambda}\psi\wedge\partial\psi\wedge \partial\ov{\partial}\psi\Bigg]\notag\\
&\overset{\eqref{minimal_surface_eq_S3}}{=} *\Bigg[\psi\wedge\Bigg(\Bigg(\partial\frac{1}{i\lambda}\Bigg)\partial\psi+\frac{1}{i\lambda}\partial^2\psi\Bigg)\wedge\ov{\partial}\psi\Bigg]\notag\\
&=*\Bigg[\psi\wedge\partial\Bigg(\frac{1}{i\lambda}\partial\psi\Bigg)\wedge\ov{\partial}\psi\Bigg]\notag\\
&\overset{\eqref{Phi1}}{=}*\Bigg(\frac{1}{i\lambda}\psi\wedge\Phi\wedge\ov{\partial}\psi\Bigg)\,.\label{pnu_1}
\end{align}
Now, by definition we have $\Phi=\varphi\nu$. Hence, we continue by
\begin{align}
\partial\nu&=\varphi*\Bigg(\frac{1}{i\lambda}\psi\wedge\nu\wedge\ov{\partial}\psi\Bigg)\notag\\
&=\varphi \cdot\Bigg[*\Bigg(\frac{1}{2\lambda}\psi\wedge\nu\wedge\partial_2\psi\Bigg)-i*\Bigg(\frac{1}{2\lambda}\psi\wedge\nu\wedge\partial_1\psi\Bigg)\Bigg]\notag\\
&=\varphi \cdot\Bigg[*\Bigg(\frac{1}{\sqrt{2\lambda}}\psi\wedge\nu\wedge\frac{1}{\sqrt{2\lambda}}\partial_2\psi\Bigg)-i*\Bigg(\frac{1}{\sqrt{2\lambda}}\psi\wedge\nu\wedge\frac{1}{\sqrt{2\lambda}}\partial_1\psi\Bigg)\Bigg]\notag\\
&=\varphi \frac{1}{\sqrt{2\lambda}}\Bigg(-\frac{1}{\sqrt{2\lambda}}\partial_1\psi-i\frac{1}{\sqrt{2\lambda}}\partial_2\psi\Bigg)\notag\\
&=-\frac{1}{\lambda}\varphi\ov{\partial}\psi\,.\label{pnu_2}
\end{align}
Together with \eqref{conformality_complex}, this implies
\begin{align*}
\langle \partial\nu,\partial\nu\rangle&= \frac{\varphi^2}{\lambda^2}\langle\partial\psi,\partial\psi\rangle=0,\\
\langle \ov{\partial}\nu,\ov{\partial}\nu\rangle&= \frac{\ov{\varphi}^2}{\lambda^2}\langle\ov{\partial}\psi,\ov{\partial}\psi\rangle=0\\
\end{align*}
as well as
\begin{align}
\langle \partial\nu,\ov{\partial}\nu\rangle=\frac{|\varphi|^2}{\lambda^2}\langle\partial\psi,\ov{\partial}\psi\rangle=\frac{|\varphi|^2}{\lambda^2}\lambda
\overset{\eqref{varphi_abs}}{=} (1-K)\lambda\,.\label{conf_nu}
\end{align}
Therefore, $\nu$ is almost conformal with $\lambda^\nu:=(1-K)\lambda$, i.e., 
\begin{align*}
g^\nu=2\lambda^\nu\delta =(1-K)g\,, 
\end{align*}
and clearly, $\lambda^\nu$ vanishes exactly at the isolated points with $K=1$. 
%
\end{proof}
\begin{theorem}[\cite{Lawson}, Proposition 10.1]\label{polar_variety}
If $\psi\colon\Sigma\to\s^3\subseteq\R^4$ is minimal, then its Gauss map $\nu\colon\Sigma\to \s^3\subseteq\R^4$ is again minimal in the sense that the minimal surface equation is satisfied. In this case, the (in general branched) surface described by $\nu$ is called the \textit{polar variety} of $M_\psi$.
\end{theorem}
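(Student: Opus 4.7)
The plan is to verify the minimal surface equation for $\nu$ in the form given by Theorem \ref{minimalsurfaceequation}. By Lemma \ref{polar_surface_conformal}, the Gauss map $\nu$ is conformal away from the isolated points where $K=1$, with conformal factor $\lambda^\nu=(1-K)\lambda$. Applied to $\nu$, the minimal surface equation \eqref{minimal_surface_eq_S3} then reduces to the identity
\begin{align*}
\partial\ov{\partial}\nu=-(1-K)\lambda\,\nu
\end{align*}
on the open set where $\lambda^\nu>0$. Since the zeros of $\lambda^\nu$ are isolated, verifying this identity on the regular locus extends it to all of $\Sigma$ by continuity, and establishes minimality in the required sense.

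To prove the displayed identity, my starting point is the formula \eqref{pnu_2},
\begin{align*}
\partial\nu=-\frac{\varphi}{\lambda}\,\ov{\partial}\psi,
\end{align*}
which I will differentiate with $\ov{\partial}$. Using that $\varphi$ is holomorphic (Lemma \ref{lemma_varphi}), so $\ov{\partial}\varphi=0$, I obtain
\begin{align*}
\partial\ov{\partial}\nu=\frac{\varphi\,\ov{\partial}\lambda}{\lambda^{2}}\,\ov{\partial}\psi-\frac{\varphi}{\lambda}\,\ov{\partial}^{2}\psi.
\end{align*}
The unknown term is $\ov{\partial}^{2}\psi$. The key observation is that conjugating the identity \eqref{Phi1} (which is meaningful because $\psi$ takes values in $\R^{4}$ and $\nu$ is real) yields $\ov{\varphi}\nu=\lambda\,\ov{\partial}(\lambda^{-1}\ov{\partial}\psi)$, which rearranges to
\begin{align*}
\ov{\partial}^{2}\psi=\ov{\varphi}\,\nu+\frac{\ov{\partial}\lambda}{\lambda}\,\ov{\partial}\psi.
\end{align*}
Substituting this into the previous line, the two tangential contributions along $\ov{\partial}\psi$ cancel exactly, leaving
\begin{align*}
\partial\ov{\partial}\nu=-\frac{|\varphi|^{2}}{\lambda}\,\nu,
\end{align*}
and \eqref{varphi_abs} converts the coefficient into $-(1-K)\lambda$, as desired.

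I do not expect a serious obstacle: the computation is a direct consequence of \eqref{pnu_2}, the holomorphicity of $\varphi$, and the conjugate of \eqref{Phi1}, together with the identity $|\varphi|^{2}=\lambda^{2}(1-K)$. The only subtle point is that the argument is carried out on the open set where $\nu$ is a conformal immersion, and one must invoke the isolatedness of the zeros of $\varphi$ (hence of $\lambda^\nu$) from Lemma \ref{lemma_varphi} to extend the equation $\partial\ov{\partial}\nu=-(1-K)\lambda\,\nu$ across the branch points by continuity; this is exactly the sense in which $\nu$ satisfies the minimal surface equation globally.
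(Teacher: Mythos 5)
Your computation is correct and arrives at exactly the identity $\partial\ov{\partial}\nu=-(1-K)\lambda\,\nu$ that the paper's proof establishes, but by a slightly different route. The paper differentiates the wedge-product expression \eqref{pnu_1}, $\partial\nu=*\big(\tfrac{1}{i\lambda}\psi\wedge\Phi\wedge\ov{\partial}\psi\big)$, and then invokes \eqref{Phi2} together with the vanishing of $*\big(\tfrac{1}{i\lambda}\psi\wedge\Phi\wedge\ov{\Phi}\big)=*\big(\tfrac{|\varphi|^2}{i\lambda}\psi\wedge\nu\wedge\nu\big)=0$; you instead differentiate the scalar form \eqref{pnu_2}, $\partial\nu=-\tfrac{\varphi}{\lambda}\ov{\partial}\psi$, and feed in the holomorphicity of $\varphi$ from Lemma \ref{lemma_varphi} plus the complex conjugate of \eqref{Phi1}. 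Since \eqref{Phi2} is itself derived from $\ov{\partial}\varphi=0$, the two arguments rest on equivalent input, but yours bypasses the Hodge-dual bookkeeping and makes the cancellation of the tangential terms along $\ov{\partial}\psi$ explicit; the paper's version keeps everything inside the $*(\cdot)$ formalism and cancels the normal-direction term $\psi\wedge\Phi\wedge\ov{\Phi}$ instead. One remark: your closing continuity argument is not actually needed --- every identity you use (\eqref{pnu_2}, $\ov{\partial}\varphi=0$, the conjugate of \eqref{Phi1}, and \eqref{varphi_abs}) holds at every point of $\Sigma$, not merely where $\lambda^\nu>0$, so the equation $\partial\ov{\partial}\nu=-\lambda^\nu\nu$ comes out directly everywhere. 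The branch points only affect the interpretation of that equation as a vanishing mean curvature condition, which is precisely why the statement is phrased as ``minimal in the sense that the minimal surface equation is satisfied.''
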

\begin{proof}{(of Theorem \ref{polar_variety})}
Using \eqref{pnu_1}, the complex conjugate of \eqref{Phi1} as well as \eqref{Phi2}, we have
\begin{align*}
\partial\ov{\partial}\nu&= \ov{\partial}\Bigg[ *\Bigg(\frac{1}{i\lambda}\psi\wedge\Phi\wedge\ov{\partial}\psi\Bigg)\Bigg]\\
&=*\Bigg[\psi\wedge\ov{\partial}\Phi\wedge\frac{1}{i\lambda}\ov{\partial}\psi+\psi\wedge\Phi\wedge\ov{\partial}\Bigg(\frac{1}{i\lambda}\ov{\partial}\psi\Bigg)\Bigg]\\
&=-(1-K)\lambda*\Bigg(\frac{1}{i\lambda}\psi\wedge\partial\psi\wedge\ov{\partial}\psi\Bigg)+*\Bigg(\frac{1}{i\lambda}\psi\wedge\Phi\wedge\ov{\Phi}\Bigg)\\
&=-(1-K)\lambda\nu+*\Bigg(\frac{1}{i\lambda}\psi\wedge\Phi\wedge\ov{\Phi}\Bigg)\\
&= -(1-K)\lambda\nu
\end{align*}
where we used in the last step that
\begin{align*}
*\Bigg(\frac{1}{i\lambda}\psi\wedge\Phi\wedge\ov{\Phi}\Bigg)&=*\Bigg(\frac{|\varphi|^2}{i\lambda}\psi\wedge\nu\wedge\nu\Bigg)=0\,.\\
\end{align*}
Concluded, we have
\begin{align*}
\partial\ov{\partial}\nu=-\lambda^\nu\nu\,.
\end{align*}
\end{proof}
\noindent As shown \cite{Lawson}, we include the following property of the polar variety which will be used later on. The corresponding proof is based on the analysis of the quadratic differential $\omega$ from Remark \ref{quadratic_differential}.
\begin{proposition}[\cite{Lawson}, p. 361]\label{branchpoints}
The Gauss map $\nu\colon\Sigma\to\s^3\subseteq\R^4$ of a minimal immersion is non-singular if and only if $\Sigma$ covers a torus or a Klein bottle.
\end{proposition}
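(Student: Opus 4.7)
The plan is to translate non-singularity of $\nu$ into a statement about the vanishing locus of the Hopf differential $\omega=\varphi\,\dup z^2$, and then conclude by a global degree argument on $\Sigma$. By Lemma \ref{polar_surface_conformal} the singular set of $\nu$ coincides with $\{K=1\}$, and by Lemma \ref{lemma_varphi} this is exactly $\{\varphi=0\}$. Since $\omega$ is a globally defined holomorphic quadratic differential on $\Sigma$ (Remark \ref{quadratic_differential}), non-singularity of $\nu$ is therefore equivalent to $\omega$ being nowhere vanishing on $\Sigma$.

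Next, I would pass to the orientable surface $\Sigma$ (which is where $\varphi$ is defined). Regarded as a holomorphic section of $K_\Sigma^{\otimes 2}$, where $K_\Sigma$ is the canonical bundle of the Riemann surface $\Sigma$, the divisor of $\omega$ has total degree $\deg(K_\Sigma^{\otimes 2})=2(2g-2)=4g-4$, with $g$ denoting the genus of $\Sigma$. Consequently, $\omega$ can be nowhere vanishing only if $4g-4=0$, i.e.\ $g=1$. This forces $\Sigma$ to be a torus. If moreover the minimal immersion originally lives on a non-orientable quotient $\Sigma'=\Sigma/\Z_2$ by an orientation-reversing deck involution, then $\chi(\Sigma')=0$ and $\Sigma'$ is non-orientable, so $\Sigma'$ must be a Klein bottle; in that situation $\Sigma$ covers the Klein bottle.

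For the converse, suppose $\Sigma$ is a torus (or the orientable double cover of a Klein bottle, which again makes $\Sigma$ a torus). Then $K_\Sigma$ is holomorphically trivial, hence so is $K_\Sigma^{\otimes 2}$, and every holomorphic section corresponds via a global trivialization to a holomorphic function on a compact Riemann surface, which by the maximum principle is constant. Thus $\omega$ is either identically zero or nowhere vanishing. The first alternative gives $\varphi\equiv 0$, which by \eqref{varphi_abs} means $K\equiv 1$ and hence $B\equiv 0$ (cf.\ the proof of Lemma \ref{lemma_varphi}); but then $M_\psi$ is totally geodesic, i.e.\ a great two-sphere, contradicting the assumption that $\Sigma$ has genus one. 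Therefore $\omega$ is nowhere zero, and $\nu$ is non-singular.

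The main obstacle I expect is the bookkeeping around the Hopf differential as a section of $K_\Sigma^{\otimes 2}$: one has to verify carefully that $\omega$ really descends to a globally defined holomorphic quadratic differential (using the transformation law from Remark \ref{quadratic_differential}) and that the degree-counting argument $\sum_p \textup{ord}_p\,\omega=4g-4$ is unobstructed (e.g.\ by invoking Riemann--Roch or the standard identification of holomorphic quadratic differentials with sections of $K_\Sigma^{\otimes 2}$). The remaining step, ruling out $\omega\equiv 0$ in the genus-one case, is routine once the totally geodesic case is eliminated on topological grounds.
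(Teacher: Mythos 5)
Your argument is correct and is precisely the approach the paper indicates (it omits the proof, citing Lawson, but states that it rests on the analysis of the holomorphic quadratic differential $\omega=\varphi\,\dup z^2$ from Remark \ref{quadratic_differential}): non-singularity of $\nu$ is equivalent to $\varphi$ being nowhere zero via Lemmas \ref{lemma_varphi} and \ref{polar_surface_conformal}, and the degree count $\deg K_\Sigma^{\otimes 2}=4g-4$ together with the exclusion of the totally geodesic case on the torus gives both implications. No gaps.
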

\noindent At this point, we continue with another map associated to every immersion into $\s^3$. 
\begin{definition}[\cite{Lawson}, p. 361]\label{bipolar_surface}
In the notation of Section \ref{wedge_product}, we define the map
\begin{align*}
\widetilde{\psi}\colon\Sigma\to\mathbb{S}^5\subseteq\R^6\cong\Lambda^2\R^4,\quad\widetilde{\psi}:=\psi\wedge\nu\,.
\end{align*}
\end{definition}
\begin{lemma}[\cite{Lawson}, p.361]\label{bipolar_surface_conformal}
If $\psi\colon\Sigma\to\s^3\subseteq\R^4$ is minimal, then $\widetilde{\psi}\colon\Sigma\to \s^5\subseteq\R^6$ is a non-singular, conformal immersion with induced metric 
\begin{align*}
g^{\widetilde{\psi}}=(2-K)g\,.
\end{align*}
Hence, if $\Sigma$ is closed, then the Gauss-Bonnet theorem immediately yields
\begin{align*}
\textup{area}\Big(\widetilde{\psi}\Big)=2\textup{area}(\psi)-2\pi\chi(\Sigma)\,.
\end{align*}
\end{lemma}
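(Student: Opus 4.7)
The plan is to verify conformality of $\widetilde{\psi}=\psi\wedge\nu$ directly by computing its complexified first fundamental form, using the product rule for the wedge product and the formula \eqref{wedgescalarproduct} specialized to $2$-vectors, namely
\begin{align*}
\langle a\wedge b,c\wedge d\rangle=\langle a,c\rangle\langle b,d\rangle-\langle a,d\rangle\langle b,c\rangle.
\end{align*}
First I would check that $\widetilde{\psi}$ actually lands in $\mathbb{S}^5$: since $\langle\psi,\nu\rangle=0$ and $|\psi|=|\nu|=1$, the above formula gives $|\widetilde{\psi}|^2=1$. Then I would apply $\partial$ and $\ov{\partial}$:
\begin{align*}
\partial\widetilde{\psi}=\partial\psi\wedge\nu+\psi\wedge\partial\nu,\qquad \ov{\partial}\widetilde{\psi}=\ov{\partial}\psi\wedge\nu+\psi\wedge\ov{\partial}\nu.
\end{align*}

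Next I would assemble the scalar products, relying on \eqref{conformality_complex}, the identities $\langle\psi,\partial\psi\rangle=\langle\psi,\ov{\partial}\psi\rangle=\langle\nu,\psi\rangle=\langle\nu,\partial\psi\rangle=\langle\nu,\ov{\partial}\psi\rangle=\langle\nu,\partial\nu\rangle=\langle\nu,\ov{\partial}\nu\rangle=0$, and, crucially, the identity $\partial\nu=-\frac{\varphi}{\lambda}\ov{\partial}\psi$ from \eqref{pnu_2} together with its conjugate, which forces $\langle\psi,\partial\nu\rangle=\langle\psi,\ov{\partial}\nu\rangle=0$ and $\langle\partial\nu,\partial\nu\rangle=0$. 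Expanding $\langle\partial\widetilde{\psi},\partial\widetilde{\psi}\rangle$ into four terms via the wedge inner product formula, each term collapses to zero, and similarly for $\langle\ov{\partial}\widetilde{\psi},\ov{\partial}\widetilde{\psi}\rangle$. For the mixed term, only the "diagonal" summands survive, yielding
\begin{align*}
\langle\partial\widetilde{\psi},\ov{\partial}\widetilde{\psi}\rangle=\langle\partial\psi,\ov{\partial}\psi\rangle\langle\nu,\nu\rangle+\langle\psi,\psi\rangle\langle\partial\nu,\ov{\partial}\nu\rangle=\lambda+(1-K)\lambda=(2-K)\lambda,
\end{align*}
where the value $\langle\partial\nu,\ov{\partial}\nu\rangle=(1-K)\lambda$ is \eqref{conf_nu}. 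This simultaneously establishes conformality, identifies $\lambda^{\widetilde{\psi}}=(2-K)\lambda$, and hence $g^{\widetilde{\psi}}=(2-K)g$.

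For non-singularity, I would invoke Lemma \ref{lemma_varphi} which gives $K\le 1$, so $2-K\ge 1>0$ everywhere; thus $\widetilde{\psi}$ is an immersion on all of $\Sigma$ (in contrast to the polar map $\nu$, which degenerates at the zeros of $\varphi$). Finally, the area identity is a direct consequence of the conformal factor: integrating $\dup\mu_{g^{\widetilde{\psi}}}=(2-K)\dup\mu_g$ gives
\begin{align*}
\textup{area}\bigl(\widetilde{\psi}\bigr)=2\,\textup{area}(\psi)-\int_\Sigma K\,\dup\mu_g=2\,\textup{area}(\psi)-2\pi\chi(\Sigma)
\end{align*}
by Gauss--Bonnet on the closed surface $\Sigma$. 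The only mildly delicate step is the bookkeeping in the expansion of $\langle\partial\widetilde{\psi},\partial\widetilde{\psi}\rangle$ and the mixed term; everything else is either a direct substitution or an appeal to the preceding lemmas.
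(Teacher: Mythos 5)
Your proposal is correct and follows essentially the same route as the paper: the product rule for $\partial\widetilde{\psi}$, expansion of the inner products via the wedge-product determinant formula, the identities \eqref{scalar_products}, \eqref{pnu_2} and \eqref{conf_nu} to kill all but the diagonal terms, and $K\le 1$ for non-singularity. The explicit Gauss--Bonnet step and the check that $\widetilde{\psi}$ maps into $\mathbb{S}^5$ are minor additions the paper leaves implicit.
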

\begin{proof}
Using the product rule for bilinear maps, we compute
\begin{align*}
\partial\widetilde{\psi}=\partial\psi\wedge\nu+\psi\wedge\partial\nu\,,
\ov{\partial}\widetilde{\psi}=\ov{\partial}\psi\wedge\nu+\psi\wedge\ov{\partial}\nu\,.
\end{align*}
By the means of \eqref{scalar_products} and \eqref{pnu_2}, we obtain
\begin{align*}
\langle \partial\widetilde{\psi},\partial\widetilde{\psi}\rangle&=
\langle \partial\psi\wedge\nu, \partial\psi\wedge\nu\rangle +2\langle\partial\psi\wedge\nu,\psi\wedge\partial\nu\rangle+\langle \psi\wedge\partial\nu,\psi\wedge\partial\nu\rangle\\
&=\det\left(\begin{pmatrix}
\langle \partial\psi,\partial\psi\rangle & \langle \partial\psi,\nu\rangle\\
\langle\nu, \partial\psi\rangle & \langle \nu,\nu\rangle
\end{pmatrix} 
\right)
+ 2\cdot\det\left(\begin{pmatrix}
\langle \partial\psi, \psi\rangle & \langle \partial\psi,\partial\nu\rangle\\
\langle \nu,\psi\rangle & \langle\nu,\partial\nu\rangle
\end{pmatrix}
\right)\\
&\hspace{1.5cm}+\det\left(\begin{pmatrix}
\langle\psi,\psi\rangle & \langle \psi,\partial\nu\rangle\\
\langle\partial\nu,\psi\rangle & \langle\nu,\partial\nu\rangle
\end{pmatrix}
\right)\\
&=0
\end{align*}
and analogously, by \eqref{conf_nu},
\begin{align*}
\langle \ov{\partial}\widetilde{\psi},\ov{\partial}\widetilde{\psi}\rangle=0,\quad
\langle \partial\widetilde{\psi},\ov{\partial}\widetilde{\psi}\rangle=(2-K)\lambda\,.
\end{align*}
As $K\le 1$, $\widetilde{\psi}$ is conformal with $\lambda^{\widetilde{\psi}}:=(2-K)\lambda$, i.e., 
\begin{align*}
g^{\widetilde{\psi}}=2\lambda^{\widetilde{\psi}}\delta =(2-K)g\,.
\end{align*}
\end{proof}
\begin{theorem}[\cite{Lawson}, p. 361]\label{minimal_bipolar_surface}
If $\psi\colon\Sigma\to\s^3\subseteq\R^4$ is minimal, then the map $\widetilde{\psi}\colon\Sigma\to\mathbb{S}^5\subseteq\R^6$ is again minimal and describes the  so-called bipolar surface of $M_\psi$.
\end{theorem}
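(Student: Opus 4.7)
The plan is to verify the minimal surface equation in $\s^5$, i.e., $\Delta_{g^{\widetilde\psi}}\widetilde\psi=-2\widetilde\psi$ (cf.\ Theorem \ref{minimalsurfaceequation}), for the map $\widetilde\psi=\psi\wedge\nu$. By Lemma \ref{bipolar_surface_conformal}, $\widetilde\psi$ is a conformal immersion with $\lambda^{\widetilde\psi}=(2-K)\lambda$, so in the conformal coordinate $z=x^1+ix^2$ it is equivalent to show
\begin{align*}
\partial\ov\partial\,\widetilde\psi=-\lambda^{\widetilde\psi}\,\widetilde\psi=-(2-K)\lambda\,(\psi\wedge\nu)\,,
\end{align*}
in analogy with the way \eqref{minimal_surface_eq_S3} arises from Theorem \ref{minimalsurfaceequation} on a conformal chart.

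The key computation is a Leibniz expansion of $\partial\ov\partial(\psi\wedge\nu)$. Since the wedge product is bilinear, I first get
\begin{align*}
\partial\ov\partial\,\widetilde\psi=\partial\ov\partial\psi\wedge\nu+\ov\partial\psi\wedge\partial\nu+\partial\psi\wedge\ov\partial\nu+\psi\wedge\partial\ov\partial\nu\,.
\end{align*}
For the first term, I apply the minimal surface equation \eqref{minimal_surface_eq_S3} in the form $\partial\ov\partial\psi=-\lambda\psi$, yielding $-\lambda\,\psi\wedge\nu$. For the last term, I invoke Theorem \ref{polar_variety} (i.e.\ $\partial\ov\partial\nu=-\lambda^\nu\nu=-(1-K)\lambda\nu$), yielding $-(1-K)\lambda\,\psi\wedge\nu$. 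Adding these two contributions already produces the desired $-(2-K)\lambda\,\psi\wedge\nu$, so the argument will be complete as soon as I show that the two mixed terms $\ov\partial\psi\wedge\partial\nu$ and $\partial\psi\wedge\ov\partial\nu$ vanish.

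For the mixed terms I use the explicit formula \eqref{pnu_2}, namely $\partial\nu=-\tfrac{\varphi}{\lambda}\ov\partial\psi$, together with its complex conjugate $\ov\partial\nu=-\tfrac{\ov\varphi}{\lambda}\partial\psi$. Then
\begin{align*}
\ov\partial\psi\wedge\partial\nu=-\frac{\varphi}{\lambda}\,\ov\partial\psi\wedge\ov\partial\psi=0\,,\qquad \partial\psi\wedge\ov\partial\nu=-\frac{\ov\varphi}{\lambda}\,\partial\psi\wedge\partial\psi=0\,,
\end{align*}
by the alternating property of the wedge product. (That $\widetilde\psi$ takes values in $\s^5$ is automatic: $|\psi\wedge\nu|^2=\det\mathrm{diag}(|\psi|^2,|\nu|^2)=1$ by \eqref{wedgescalarproduct}, using $\psi\perp\nu$.)

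I do not expect a serious obstacle here; the nontrivial inputs — the analogous equation for $\nu$ (Theorem \ref{polar_variety}), the conformality of $\widetilde\psi$ with factor $(2-K)\lambda$ (Lemma \ref{bipolar_surface_conformal}) and the crucial formula $\partial\nu=-\tfrac{\varphi}{\lambda}\ov\partial\psi$ (equation \eqref{pnu_2}) — have all been established in the preceding lemmas, so the proof reduces to a clean bookkeeping step combining these ingredients via the Leibniz rule.
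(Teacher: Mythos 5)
Your proof is correct, and its overall skeleton coincides with the paper's: expand $\partial\ov{\partial}(\psi\wedge\nu)$ by the Leibniz rule, use $\partial\ov{\partial}\psi=-\lambda\psi$ and $\partial\ov{\partial}\nu=-(1-K)\lambda\nu$ for the outer terms, and kill the mixed terms. The one place where you genuinely diverge is the mixed terms. The paper does \emph{not} use \eqref{pnu_2} here; instead it invokes the Weingarten equation \eqref{Weingarten_S3} to compute
\begin{align*}
|\partial_1\psi\wedge\partial_1\nu|^2=|\partial_2\psi\wedge\partial_2\nu|^2=\beta_{12}^2\,,\qquad \langle\partial_1\psi\wedge\partial_1\nu,\partial_2\psi\wedge\partial_2\nu\rangle=-\beta_{12}^2\,,
\end{align*}
concludes that the real $2$-vector $\partial_1\psi\wedge\partial_1\nu+\partial_2\psi\wedge\partial_2\nu$ has vanishing norm and hence is zero, and identifies this sum with $\tfrac{1}{2}\big(\ov{\partial}\psi\wedge\partial\nu+\partial\psi\wedge\ov{\partial}\nu\big)$ --- so only the \emph{sum} of the two mixed terms is shown to vanish. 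You instead show each mixed term vanishes individually from $\partial\nu=-\tfrac{\varphi}{\lambda}\ov{\partial}\psi$ and its conjugate $\ov{\partial}\nu=-\tfrac{\ov{\varphi}}{\lambda}\partial\psi$ (valid since $\psi$, $\nu$, $\lambda$ are real), together with the alternating property of the wedge. Given that \eqref{pnu_2} is already established in the proof of Lemma \ref{polar_surface_conformal}, your route is the more economical one and in fact proves slightly more; the paper's route has the virtue of being self-contained at the level of the Weingarten equation and not depending on the holomorphic-function machinery behind $\varphi$. Your parenthetical check that $|\psi\wedge\nu|=1$ via \eqref{wedgescalarproduct} is a detail the paper leaves implicit in Definition \ref{bipolar_surface}.
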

\begin{proof}
At first, the Weingarten equation yields that
\begin{align*}
|\partial_1\psi\wedge\partial_1\nu|^2 =|\partial_1\psi|^2|\partial_1\nu|^2-\langle\partial_1\psi,\partial_1\nu\rangle^2
=\beta_{12}^2
\end{align*}
and, similarly, 
\begin{align*}
|\partial_2\psi\wedge\partial_2\nu|^2=\beta_{12}^2
\end{align*}
as well as
\begin{align*}
\langle\partial_1\psi\wedge\partial_1\nu,\partial_2\psi\wedge\partial_2\nu\rangle=-\beta_{12}^2\,.
\end{align*}
From this, we deduce that
\begin{align*}
|\partial_1\psi\wedge\partial_1\nu+\partial_2\psi\wedge\partial_2\nu|^2=|\partial_1\psi\wedge\partial_1\nu|^2+|\partial_2\psi\wedge\partial_2\nu|^2+2\langle\partial_1\psi\wedge\partial_1\nu,\partial_2\psi\wedge\partial_2\nu\rangle=0\,.
\end{align*}
In particular, we have that
\begin{align*}
\ov{\partial}\psi\wedge\partial\nu+\partial\psi\wedge\ov{\partial}\nu=2\cdot(\partial_1\psi\wedge\partial_1\nu+\partial_2\psi\wedge\partial_2\nu)=0\,.
\end{align*}
Together with the product rule and the minimal surface equations for $\psi$ and $\nu$, this finally yields that
\begin{align*}
\partial\ov{\partial}\widetilde{\psi}&=\partial\ov{\partial}\psi\wedge\nu+\ov{\partial}\psi\wedge\partial\nu+\partial\psi\wedge\ov{\partial}\nu+\psi\wedge \partial\ov{\partial}\nu\\
&=-(2-K)\lambda\widetilde{\psi}\,,
\end{align*}
i.e., by Lemma \ref{bipolar_surface_conformal}, that the conformal immersion $\widetilde{\psi}$ is again minimal. 
\end{proof}
\begin{remark}
For closed minimal surfaces the topology of the surface $M_\psi$ is determined by an immersion $\psi\colon\Sigma\to\s^3$ on a closed, smallest and possibly non-orientable \textit{fundamental domain} $\Sigma$ such that $\psi$ is injective up to the occurrence of local self-intersections. The area measured by an immersion of this kind already gives the actual area of the surface $M_\psi$, which, otherwise, could be hidden by multiple coverings of $M_\psi$.
Now, defining the bipolar immersion $\widetilde{\psi}\colon\Sigma\to\s^5$ on a smallest fundamental domain $\Sigma$ for $M_\psi$ (or, if necessary, its orientable double-cover with the specified orientation-reversing involution) it is not clear in the first place whether $\Sigma$ is also a smallest fundamental domain for the bipolar surface $\widetilde{\psi}(\Sigma)$. In fact, as shown by the examples of Lawson's bipolar $\tau$-surfaces in \cite{Lapointe}, the latter is not necessarily the case: the bipolar surface can be multiply covered by $\Sigma$ and performing the quotient with respect to such covering maps on $\Sigma$ can lead to a change of the topology. In particular, this indicates that the formula given in Lemma \ref{bipolar_surface_conformal} must be read as an upper bound to the actual area of the bipolar surface. Concerning the results in \cite{Lapointe}, the information on fundamental domains for bipolar immersions was deduced by symmetries acting on the domain of the bipolar immersion, heavily relying on the knowledge of explicit parametrizations. 
\end{remark}

\section{Minimal Immersions arising from Lawson's \\Construction Procedure}\label{preimage_construction}
In the following, the goal is to analyse the bipolar surfaces of closed minimal surfaces in $\s^3$ resulting from Lawson's construction procedure introduced in \cite{Lawson}. By successive application of the Schwarz reflection principle, this method extends an embedded minimal disk in $\mathbb{S}^3\subseteq \R^4$, bounded by a geodesic polygon $\Gamma$, to a complete, non-singular minimal surface $\mathcal{M}_\Gamma\subseteq \s^3$  (cf.\ \citep{Lawson}, Theorem 1). The key ingredient of this existence result is the chosen class of geodesic polygons in $\mathbb{S}^3$, which e.g.\ requires the interior angles to be of the form $\frac{\pi}{k}$ for some $k\in \Z_{\ge 2}$ and that $\Gamma$ is \textit{convex}, i.e., is contained in the boundary of its convex hull
\begin{align*}
\mathcal{C}(\Gamma):=\bigcap\big\{H\subseteq\s^3: H\textup{ is a closed hemisphere containing }\Gamma\big\}
\end{align*}
(for a full characterization see \cite{Lawson}, p. 341). Considered as boundary values for the Plateau problem, such polygons guarantee the existence of a unique solution $f:\Delta\to \s^3$ defined on the closed unit disk $\Delta\subseteq\R^2$ (cf.\ \cite{Morrey}, \cite{MeeksYau}), which is a conformal embedding from the interior $\Delta^\circ$ into $\mathcal{C}(\Gamma)^\circ$ by \cite{Lawson_Sn} and one-to-one on $\partial\Delta$ (cf.\ \cite{Courant}). Due to \cite{HeinzHildebrandt}, $f$ is in addition analytic on $\partial\Delta$ except possibly at the points corresponding to the vertices of $\Gamma$. Therewith, $f$ can be analytically extended by reflections across the geodesic arcs of $\Gamma$ (cf. \cite{Lawson}, Proposition 3.1), and repeated application of this principle generates the reflection process.

We start this section by translating Lawson's construction of a closed minimal surface $\mathcal{M}_\Gamma$ into a definition of an associated immersion $\psi\colon S\to\s^3$ on a smallest fundamental domain $S$. Next, by a possible transition to the orientable double cover $\ov{S}$ of $S$, we will specify a corresponding Gauss map $\nu$, which then allows us to define the bipolar immersion $\widetilde{\psi}$. Finally, we will use this setup to deduce a general condition for the existence of a smooth, orientation-preserving covering map on $S$ or $\ov{S}$ which leaves $\widetilde{\psi}$ invariant. As shown later on, this can provide an approach to a smallest fundamental domain of the bipolar immersion or, in other words, to the topology of the bipolar surface.

\bigskip
To begin, suppose that $\Gamma\subseteq\s^3$ is a geodesic polygon meeting the requirements from \cite{Lawson} and that $f: \Delta\to \mathbb{S}^3$ is the unique parametrization of the initial piece of surface which was mentioned before. Denote by $\gamma_1, ..., \gamma_N$ the great circles containing the boundary arcs of $\Gamma$. Let $r_{\gamma_1}, ...,r_{\gamma_N}\in \textup{SO}(4)$ be the corresponding geodesic reflections and $G=\langle r_{\gamma_1}, ...,r_{\gamma_N}\rangle $ their freely generated group. As deduced in \cite{Lawson}, the successive reflection process yields
\begin{align*}
\mathcal{M}_\Gamma=\bigcup_{g\in G} (g\circ f)(\Delta)\,,
\end{align*}
which particularly shows that $\mathcal{M}_\Gamma$ is closed if and only if the group $G$ is finite. In that sense, we stick to the assumption that $|G|<\infty$ in what follows.

In order to obtain the desired domain $S$, the idea is now to glue together the preimages of the minimal disks $g\circ f$ for $g\in G$. More precisely, this means to introduce an equivalence relation on the stack $G\times \Delta$ of labelled disks $\{g\}\times \Delta$ in accordance with Lawson's reflection process. Clearly, if two points $(g,p)$ and $(h,q)\in G\times \Delta$ are identified in this way, we must have 
\begin{align*}
(g\circ f)(p)=(h\circ f)(q)\,.
\end{align*}
But to define the gluing relation, this condition is not sufficient since the equality could as well correspond to a self-intersection of the surface. To exclude the latter scenario, we put an additional condition on the group elements $g$ and $h$, which is derived as follows. Consider for each $p\in \Delta$ the subgroup 
\begin{align}
G^p:=\Big\langle \{r_{\gamma_i}: f(p)\in\gamma_i\}\Big\rangle\subseteq G\,. \label{G^p}
\end{align}
Regarding the group structure of $G^p$, only the following three cases can occur:
\begin{enumerate}
\item If $p\in \Delta^\circ$, then $G^p=\{\mathbbm{1}_4\}$.
\item If $p\in \partial\Delta$ is not mapped onto a vertex of $\Gamma$ by $f$, i.e. $f(p)\in \gamma_i$ for exactly one $i\in\{1,...,N\}$, then 
\begin{align*}
G^p=\{\mathbbm{1}_4, r_{\gamma_i}\}\cong \Z_2.
\end{align*}
\item If $p\in \partial\Delta$ is mapped onto a vertex of $\Gamma$ by $f$, i.e. $f(p)\in \gamma_i\cap \gamma_{i+1}$ for an $i\in\{1,...,N\}$, then
\begin{align*}
G^p=\langle r_{\gamma_i}, r_{\gamma_{i+1}}\rangle\cong D_n
\end{align*}
where $n$ defines the interior angle $\frac{\pi}{n}$  of $\Gamma$ at $f(p)$ and $D_n$ denotes the dihedral group of order $2n$. This is clear by the fact that $r_{\gamma_{i+1}}\cdot r_{\gamma_i}$ is the rotation by $\frac{2\pi}{n}$ around the great circle through $f(p)$ which is orthogonal to $r_{\gamma_{i+1}}$ and $r_{\gamma_i}$.
\end{enumerate}
For the initial piece $f$, the group $G^p$ exactly labels the different, neighbouring pieces of surface at the point $f(p)$, which, according to \cite{Lawson}, yield an analytic, non-singular extension of $f$ in a neighbourhood of $p$ (cf.\ \cite{Lawson}, Lemmata 4.2 and 4.3). As a consequence, $g\cdot G^p$ encodes the neighbours at the point $(g\circ f)(p)$ for any $g\in G$. Thus, we conclude that two points $(g,p),~(h,q)\in G\times \Delta$ that satisfy
\begin{align*}
(g\circ f)(p)=(h\circ f)(q)
\end{align*}
belong to neighbouring pieces if and only if $g^{-1}\cdot h\in G^p$. Concluded, this allows the following construction.
\begin{construction}{}\label{construction}
On $G\times\Delta$, we introduce the equivalence relation
\begin{align*}
(g,p)\sim (h,q)~:\Leftrightarrow~ (g\circ f)(p)=(h\circ f)(q)~\textup{and}~g^{-1}\cdot h\in G^p
\end{align*}
and denote a corresponding equivalence class by $\big[(g,p)\big]$. Then, due to \cite{Lawson} (Section 4), the quotient
\begin{align*}
S:=\big(G\times\Delta\big)/\sim
\end{align*}
is a closed, smooth 2-manifold and 
\begin{align*}
\psi\colon S\to\mathbb{S}^3,~\psi\big([(g,p)]\big):=(g\circ f)(p)
\end{align*}
is an immersion of the closed minimal surface $\mathcal{M}_\Gamma$. Moreover, if the subgroup $G^{\Gamma}\subseteq G$ of symmetries of $\Gamma$ is trivial, then $\psi$ immerses $\mathcal{M}_\Gamma$ on a smallest fundamental domain.
\end{construction}
\begin{remark}\label{coveringmapGamma}
In the previous setup, a non-trivial subgroup $G^{\Gamma}\subseteq G$ of symmetries of $\Gamma$ induces a smooth covering map of degree $\big|G^\Gamma\big|$ on $S$ under which the immersion $\psi$ is invariant. More precisely, the latter is given by the orbit space projection of the group action
\begin{align*}
G^\Gamma\times S\to S,~\big(h, [(g,p)]\big)\mapsto \Big[\Big(gh, \big(f^{-1}\circ h^{-1}\circ f\big)(p)\Big)\Big]\,,
\end{align*}
where $f$ is understood as a homeomorphism onto its image. This action is smooth, proper (since $G^\Gamma$ is finite) and free (since $G^\Gamma\cap G^p=\{\mathbbm{1}_4\}$ for all $p\in\Delta$), and hence meets the requirements from Theorem 7.13 of \cite{Lee}.

A more practicable way to handle a non-trivial subgroup $G^{\Gamma}\subseteq G$ of symmetries of $\Gamma$ is to pass to a smaller initial piece of surface, bounded by a smaller geodesic polygon $\Gamma'$, such that the corresponding group $G'$ generated by Schwarz reflections does no longer contain a non-trivial subgroup leaving $\Gamma'$ invariant. 
\end{remark}
\begin{corollary}[cf. \cite{Lawson}, Prop. 4.4]\label{Eulercharacteristic}
The Euler characteristic $\chi(S)$ of $S$ is fully determined by the polygon $\Gamma$: We have
\begin{align*}
\chi(S)=\frac{|G|}{\big|G^\Gamma\big|}\cdot \Bigg( 1-\sum_{i=1}^N  \frac{k_i-1}{2 k_i}\Bigg)\,,
\end{align*}
where $\frac{\pi}{k_1}.,..., \frac{\pi}{k_N}$, $k_i\in \Z_{\ge 2}$, denote the interior angles of $\Gamma$.
\end{corollary}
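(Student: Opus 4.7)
The plan is to exhibit a CW-decomposition of $S$ arising from the disk pieces assembled in Construction~\ref{construction} and apply $\chi(S)=V-E+F$. Each copy $\{g\}\times\Delta$ carries a canonical CW-structure with one $2$-cell (the interior), $N$ edges (the boundary arcs on $\gamma_1,\ldots,\gamma_N$), and $N$ vertices (the corners of $\Gamma$); the equivalence relation $\sim$ glues these into a global CW-structure on $S$.

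First I would fix the number of distinct $2$-cells in $S$. Two disks $g\circ f(\Delta)$ and $h\circ f(\Delta)$ cover the same piece of $\mathcal{M}_\Gamma$ exactly when $h^{-1}g\in G^\Gamma$; after either passing to a smaller initial polygon whose reflection group has no polygon symmetries, or taking the quotient by the $G^\Gamma$-action described in Remark~\ref{coveringmapGamma}, the number of distinct copies in the fundamental domain is $n:=|G|/|G^\Gamma|$, so $F=n$.

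Next I would count edges and vertices. For $p$ in the relative interior of a boundary arc lying on $\gamma_i$, one has $G^p=\{\mathbbm{1}_4,r_{\gamma_i}\}$ (case 2 in the analysis preceding Construction~\ref{construction}), so each boundary arc of a disk is identified with exactly one arc of the reflected copy. Since each face contributes $N$ arcs and every edge of $S$ is shared by two faces, this gives $E=nN/2$. At a corner $p$ with interior angle $\pi/k_i$, one instead has $G^p\cong D_{k_i}$ of order $2k_i$ (case 3), so exactly $2k_i$ corners are identified into a single vertex of $S$. As $f|_{\partial\Delta}$ is one-to-one, corners coming from different vertices of $\Gamma$ remain unidentified, giving
\begin{align*}
V=\sum_{i=1}^N\frac{n}{2k_i}.
\end{align*}
Putting the counts together,
\begin{align*}
\chi(S)=V-E+F=n\left(\sum_{i=1}^N\frac{1}{2k_i}-\frac{N}{2}+1\right)=n\left(1-\sum_{i=1}^N\frac{k_i-1}{2k_i}\right),
\end{align*}
which is the claimed formula.

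The main obstacle is verifying that this really is a CW-decomposition: one must check that the $2k_i$ disk corners meeting at a vertex of angle $\pi/k_i$ genuinely fit together into a smooth open disk neighbourhood in $S$, and that no further identifications are forced by $\sim$ beyond those already accounted for by the cell-by-cell analysis of $G^p$. Both points are exactly the content of the non-singular reflection extension of $f$ across arcs and corners established in \cite{Lawson} (Lemmata 4.2 and 4.3), which underlies the very definition of $\sim$ and of the smooth structure on $S$.
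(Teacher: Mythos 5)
Your argument is correct, but it takes a genuinely different route from the paper. The paper proves the formula analytically: the local Gauss--Bonnet theorem applied to the initial geodesic polygon (whose sides are geodesics, so only the angle defects $\pi-\tfrac{\pi}{k_i}$ contribute) gives $\int_\Delta K^f\,dA=2\pi\bigl(1-\sum_i\tfrac{k_i-1}{2k_i}\bigr)$; summing over the $|G|/|G^\Gamma|$ congruent copies and invoking the global Gauss--Bonnet theorem $2\pi\chi(S)=\int_S K^\psi\,dA$ yields the claim in two lines. You instead build the CW-decomposition of $S$ by faces, arcs and corners and compute $V-E+F$ directly; your counts $F=n$, $E=nN/2$ (each arc-side paired with exactly one other since $|G^p|=2$ on an open arc) and $V=\sum_i n/(2k_i)$ (since $|G^p|=2k_i$ at a corner of angle $\pi/k_i$, and the injectivity of $f$ on $\partial\Delta$ together with the condition $g^{-1}h\in G^p$ prevents identifications between corners of different types) are all justified, and the arithmetic matches. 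The Gauss--Bonnet route is shorter and sidesteps the verification that the gluing really produces a CW-structure; your combinatorial route is more elementary, makes no use of the minimality or even the smooth geometry beyond Lawson's non-singular extension lemmas (which, as you note, are already baked into the definition of $\sim$ and of $S$), and exhibits the formula as a pure count of cells in the tessellation of $\mathcal{M}_\Gamma$ by copies of the fundamental polygon. Either proof is acceptable here.
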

\begin{proof}
The local version of the Gauss-Bonnet theorem yields 
\begin{align*}
\int_\Delta K^f\, dA&=2\pi-\sum_{i=1}^N  \Bigg(\pi-\frac{\pi}{k_i}\Bigg)=2\pi \Bigg( 1-\sum_{i=1}^N  \frac{k_i-1}{2 k_i}\Bigg)\,.
\end{align*}
Since
\begin{align*}
\int_S K^\psi\, dA= \frac{|G|}{\big|G^\Gamma\big|}\cdot \int_\Delta K^f\,dA\,,
\end{align*}
the global version implies
\begin{align*}
2\pi \chi(S)=\int_S K^\psi\, dA
\end{align*}
and therefore the assertion follows.
\end{proof}

\noindent For the topological classification of $S$, it remains to check whether $S$ is orientable or not. To this end, we consider a Gauss map $n: \Delta\to\mathbb{S}^3$ associated to the embedding $f$. Under the application of a Schwarz reflection across an arc of $\Gamma$, performed by a rotation of 180$^\circ$ about the respective great circle, the initial normal $n$ can be extended by the same reflection as $f$ combined with an additional flip. Concerning the application of a product of the generators $r_{\gamma_1},...,r_{\gamma_N}$, this generalizes in the sense that each generator contributes one flip. Consequently, $S$ is non-orientable if and only if we find a sequence of the generators that starts and ends at the initial piece $f$ but returns with the opposite normal $-n$. Expressed in terms of the group $G$ and the repeated flips of $n$, the latter situation is described by the following proposition.
\begin{proposition}{}\label{nonorientability}
$S$ is non-orientable if and only if the identity $e_G\in G$ can be written as an odd number of the generators $r_{\gamma_1},...,r_{\gamma_N}$.
\end{proposition}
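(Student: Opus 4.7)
The strategy is to recognize the orientability of $S$ as equivalent to the existence of a consistent, $\sim$-invariant global Gauss map built piece by piece from the initial normal, and to observe that the only obstruction to such consistency is the existence of an odd-length expression of $e_G$.

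I would first fix a smooth unit normal $n\colon\Delta\to\mathbb{S}^3$ to the initial immersion $f$ and make precise the sign flip sketched in the preamble of the proposition. At any $p\in\partial\Delta$ with $f(p)\in\gamma_i$, the tangent plane $df_p(T_p\Delta)$ is $r_{\gamma_i}$-invariant (it contains the tangent direction to $\gamma_i$ plus one direction orthogonal to it in $T_{f(p)}\mathbb{S}^3$), so $n(p)$ lies in the $(-1)$-eigenspace of $r_{\gamma_i}$; hence the analytic extension of $n$ onto the reflected piece $r_{\gamma_i}\circ f$ must be $-r_{\gamma_i}\circ n$ in order to be continuous across $\gamma_i$. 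By induction on word length, if $g\in G$ is represented by a word $r_{\gamma_{i_1}}\cdots r_{\gamma_{i_k}}$, the extended normal on $(g\circ f)(\Delta)$ is $(-1)^k\,g\circ n$. A natural candidate for a global Gauss map on $S$ is therefore
\begin{equation*}
\nu\bigl([(g,p)]\bigr) := \epsilon(g)\,g\bigl(n(p)\bigr), \qquad \epsilon(g):=(-1)^k \text{ for any length-}k\text{ word representing }g,
\end{equation*}
and $\epsilon$ is well defined on $G$ precisely when every expression of $e_G$ has even length.

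For the ``if'' direction, assume every expression of $e_G$ as a product of generators has even length. Then the parity map on the free group $F_N$ sending each generator to $-1$ descends to a homomorphism $\epsilon\colon G\to\{\pm1\}$, since its kernel on $F_N$ contains every relator. To verify that $\nu$ descends to $S$, take $(g,p)\sim(h,q)$, so $h=gh'$ with $h'\in G^p$; every element of $G^p$ fixes $f(p)$, hence $f(q)=(h')^{-1}(f(p))=f(p)$ and so $p=q$ by injectivity of $f$. The claim reduces to $\epsilon(h')\,h'(n(p))=n(p)$, which I would check by case analysis over the three possibilities for $G^p$ from Construction \ref{construction}: the trivial case is immediate; for $G^p\cong\mathbb{Z}_2$ generated by $r_{\gamma_i}$ one has $r_{\gamma_i}(n(p))=-n(p)$ by the eigenspace observation, matching $\epsilon(r_{\gamma_i})=-1$; and for $G^p\cong D_n$ the normal $n(p)$ at the vertex $f(p)$ is parallel to the rotation axis of $r_{\gamma_{i+1}}r_{\gamma_i}$ (being orthogonal to the tangent plane at $f(p)$, which is spanned by the incident geodesic tangents), so even-length elements of $D_n$ fix $n(p)$ while odd-length elements negate it, exactly matching $\epsilon(h')$. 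The resulting continuous unit normal $\nu\colon S\to\mathbb{S}^3$ orients $S$. For the converse, suppose $S$ is orientable and let $\nu\colon S\to\mathbb{S}^3$ be a smooth global unit normal to $\psi$, normalized so that $\nu([(\mathbbm{1}_4,p)])=n(p)$ on the initial piece. Propagating $\nu$ from one piece to each neighbour via the flip computation above forces $\nu([(g,p)])=(-1)^k\,g(n(p))$ whenever $g$ is represented by a word of length $k$. If $e_G=r_{\gamma_{j_1}}\cdots r_{\gamma_{j_m}}$ with $m$ odd, comparing the empty-word representation of $e_G$ with this length-$m$ one at any interior point $p$ of the initial piece yields $n(p)=\nu([(e_G,p)])=(-1)^m\,n(p)=-n(p)$, a contradiction; hence every expression of $e_G$ has even length.

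The main obstacle is the well-definedness check at boundary and vertex points in the ``if'' direction. It hinges on the geometric input that the tangent plane of $f(\Delta)$ at a vertex is spanned by the two incident geodesic tangents, so that its normal is aligned with the common rotation axis of the dihedral isotropy group $G^p$; combined with the eigenspace analysis at a boundary edge, this is what ensures that the purely algebraic parity sign $\epsilon$ exactly compensates the geometric action of any isotropy element on the normal, making the piecewise-defined $\nu$ descend continuously to the quotient $S$.
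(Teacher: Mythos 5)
Your proposal is correct and follows essentially the same route as the paper, which only sketches this argument in the paragraph preceding the proposition (the "one flip per generator" extension of the normal) and then records the parity map $\sigma$ in Definition \ref{parity}. Your write-up merely makes rigorous the two geometric inputs the paper leaves implicit — that $n(p)$ lies in the $(-1)$-eigenspace of $r_{\gamma_i}$ along an edge and along the rotation axis of $G^p$ at a vertex — so the sign $\epsilon(g)=(-1)^{\sigma(g)}$ is exactly what makes the piecewise normal descend to $S$.
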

Therefore, if $S$ is orientable, any representation of a group element $g\in G$ in terms of the considered generators does either contain even \textit{or} odd numbers of the latter. So, the following quantity is well-defined.
\begin{definition}{}\label{parity}
If $S$ is orientable, then we define the \textit{parity} $\sigma(g)$ of $g\in G$ by
\begin{align*}
\sigma(g):=\begin{cases}
~0\\
~1
\end{cases}
\hspace{0.1cm}\textup{ if $g$ contains }\hspace{0.1cm}
\begin{matrix} 
\textup{ even }\\
\textup{ odd }
\end{matrix}\hspace{0.1cm}\textup{ numbers of the generators $r_{\gamma_1},...,r_{\gamma_N}$}.
\end{align*}
\end{definition}
\noindent Therewith, the extension of an intial unit normal $n$ writes as follows.
\begin{construction}\label{normal_oriented}
Let $n\colon\Delta\to \mathbb{S}^3$ be a Gauss map of $f$. 
If $S$ is orientable, then 
\begin{align*}
\nu\colon S\to \mathbb{S}^3,~\nu\big([(g,p)]\big):=(-1)^{\sigma(g)}\cdot(g\circ n)(p)
\end{align*}
is a Gauss map of the immersion $\psi\colon S\to\s^3$ from Construction \ref{construction}. The choice of $\nu$ induces an orientation form $\omega$ on $S$ by the pullback of an orientation form $\Omega$ on $\s^3\subseteq \R^4$. We set
\begin{align}
\omega|_{x}(v,w):=\Omega|_{\psi(x)}\Big(d\psi|_{x}(v),d\psi|_{x}(w),\nu(x)\Big):=\det\Big(\psi(x),d\psi|_{x}(v),d\psi|_{x}(w),\nu(x)\Big)\label{orientationS}
\end{align}
for $x\in S$ and $v,w\in T_xS$.
\end{construction}
\noindent To describe the bipolar surface of $\mathcal{M}_\Gamma$, we additionally need to make sense of a Gauss map in the non-orientable case. This is performed by the transition to the orientable double cover of $S$, which can be, in a straightforward manner, constructed in similar way as $S$.
\begin{construction}\label{doublecover}
Let $n\colon\Delta\to \mathbb{S}^3$ be a Gauß map of $f$. On $\Z_2\times G\times\Delta$, we define the equivalence relation
\begin{align*}
(s,g,p)\sim_{\textup{dc}}(t,h,q)~:\Leftrightarrow~ (g,p)\sim (h,q)~\textup{and}~ (-1)^s \cdot (g\circ n)(p)=(-1)^t\cdot (h\circ n)(q)
\end{align*}
and denote a corresponding equivalence class by $[(s,g,p)]$. If $S$ is non-orientable, the orientable double cover of $S$ is given by the smooth 2-manifold
\begin{align*}
\ov{S}:=\big( \Z_2\times G\times\Delta\big)/\sim_{\textup{dc}}\,,
\end{align*}
where the map
\begin{align*}
i\colon \ov{S}\to \ov{S},~i\big([(s,g,p)]\big):=[(-s,g,p)]
\end{align*}
is an orientation-reversing, fixed-point-free involution satisfying that $\ov{S}/\langle i\rangle=S$. In this case, we describe the surface $\mathcal{M}_\Gamma$ by the immersion
\begin{align*}
\ov{\psi}\colon\ov{S}\to\s^3,~\ov{\psi}\big([(s,g,p)]\big):=\psi\big([g,p]\big)
\end{align*}
with Gauss map
\begin{align*}
\ov{\nu}:\ov{S}\to\s^3,~\ov{\nu}\big([(s,g,p)]\big):=(-1)^{s}\cdot(g\circ n)(p)\,.
\end{align*}
The induced orientation form $\ov{\omega}$ on $\ov{S}$ is defined analogously as in \eqref{orientationS}. 

\bigskip
Concluded, the bipolar surface $\widetilde{\mathcal{M}}_\Gamma\subseteq \s^5$ of $\mathcal{M}_\Gamma$ is immersed by 
\begin{align*}
\widetilde{\psi}:=\begin{cases}
~\psi\wedge\nu\\
~\ov{\psi}\wedge \ov{\nu}
\end{cases}
\hspace{0.2cm}\textup{if $S$ is }\hspace{0.3cm}
\begin{matrix} 
\textup{orientable}\,;\\
\textup{non-orientable.}
\end{matrix}
\end{align*}
\end{construction}
\begin{remark}
If $S$ is orientable, then $\ov{S}$ is the disconnected union of two copies of $S$ (and not relevant for us in the following).
\end{remark}
\noindent Towards a smallest domain for the bipolar immersion, the definition of $S$ or $\ov{S}$ allows to exploit the bilinearity of the wedge product as follows.
\begin{theorem}\label{-1_cover}
~
\begin{enumerate}[label=(\roman*)]
\item If $S$ is orientable and $-\mathbbm{1}_4\in G$ with $\sigma(-\mathbbm{1}_4)=0$, then the action 
\begin{align*}
\langle -\mathbbm{1}_4\rangle \times S\to S, ~\big(h,[(g,p)]\big)\mapsto [(hg,p)]
\end{align*}
leaves the bipolar immersion $\widetilde{\psi}\colon S\to\s^5$ invariant and induces a covering map of degree 2 on $S$
such that the corresponding quotient $S/\langle -\mathbbm{1}_4\rangle$ is orientable. In that case, we have
\begin{align*}
\textup{area}\Big(\widetilde{\mathcal{M}}_\Gamma\Big)\le \textup{area}(\mathcal{M}_\Gamma)-\pi\chi(S)\,.
\end{align*}
\item Analogously, if $S$ is non-orientable and $-\mathbbm{1}_{\R^4}\in G$, then the action
\begin{align*}
\langle -\mathbbm{1}_{4}\rangle \times \ov{S}\to \ov{S}, ~\big(h,[(s,g,p)]\big)\mapsto [(s,hg,p)]
\end{align*}
leaves the bipolar immersion $\widetilde{\psi}\colon\ov{S}\to\s^5$ invariant and induces a covering map of degree 2 on $\ov{S}$ such that the corresponding quotient $\ov{S}/\langle -\mathbbm{1}_4\rangle$ is orientable. In that case, we have
\begin{align*}
\textup{area}\Big(\widetilde{\mathcal{M}}_\Gamma\Big)\le 2\textup{area}(\mathcal{M}_\Gamma)-2\pi\chi(S)\,.
\end{align*}
\end{enumerate}
\end{theorem}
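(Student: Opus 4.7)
The overall strategy for both parts is the same. I would first verify that each action is well-defined on the quotient by checking compatibility with the defining equivalence relation, then observe that it is smooth, proper, and free, so that Theorem 7.13 of \cite{Lee}, as already invoked in Remark \ref{coveringmapGamma}, yields a smooth degree-two covering map; next I would check that $\widetilde{\psi}$ descends to the quotient by exploiting the bilinearity of the wedge product, verify that the action is orientation-preserving (so the quotient is orientable), and finally read off the area bound from Lemma \ref{bipolar_surface_conformal} applied on the total space of the covering, dividing by the covering degree.

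For part (i), well-definedness of $T_h \colon [(g,p)] \mapsto [(-\mathbbm{1}_4 \cdot g,p)]$ follows because the subgroup $G^p$ in \eqref{G^p} depends only on $p$, not on $g$, and the ambient identity $(-g\circ f)(p) = -(g\circ f)(p)$ is preserved by $\sim$. Freeness is immediate since $(g\circ f)(p)\in\s^3$ cannot equal its antipode. Using $\sigma(-\mathbbm{1}_4\cdot g) = \sigma(-\mathbbm{1}_4) + \sigma(g) = \sigma(g) \bmod 2$ together with the linearity of $-\mathbbm{1}_4$ on $\R^4$, one sees $\psi(T_h(x)) = -\psi(x)$ and $\nu(T_h(x)) = -\nu(x)$, so by bilinearity of $\wedge$,
\begin{align*}
\widetilde{\psi}(T_h(x)) \;=\; (-\psi(x))\wedge(-\nu(x)) \;=\; \psi(x)\wedge\nu(x) \;=\; \widetilde{\psi}(x).
\end{align*}
Orientation-preservation is the calculation $T_h^*\omega = (-1)^4\omega = \omega$ obtained by substituting the four negations into \eqref{orientationS}.

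For part (ii), the analogous statements on $\overline{S}$ go through by the same arguments; here no parity hypothesis is required, because $\overline{\nu}([(s,hg,p)])$ picks up the sign $-1$ from the linearity of $-\mathbbm{1}_4$ on $(g\circ n)(p)$ without any contribution from the $\mathbb{Z}_2$-label $s$, and this single sign is again absorbed by bilinearity of $\wedge$. For the area estimate in (i), Lemma \ref{bipolar_surface_conformal} gives $\textup{area}(\widetilde{\psi}) = 2\,\textup{area}(\mathcal{M}_\Gamma) - 2\pi\chi(S)$ on $S$, and dividing by the established covering degree $2$ yields the claim. For (ii) the same lemma applied on $\overline{S}$, together with $\textup{area}(\overline{\psi}) = 2\,\textup{area}(\mathcal{M}_\Gamma)$ and $\chi(\overline{S}) = 2\chi(S)$, and a further division by $2$, yields the second bound. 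The inequality (rather than equality) reflects the possibility of additional non-trivial deck transformations of $\widetilde{\psi}$ beyond $-\mathbbm{1}_4$.

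The main obstacle, and the reason the parity hypothesis $\sigma(-\mathbbm{1}_4)=0$ appears in (i) but not in (ii), lies in the different sign conventions entering $\nu$ (through the parity $\sigma$) and $\overline{\nu}$ (through the independent $\mathbb{Z}_2$-label $s$): if $\sigma(-\mathbbm{1}_4)=1$, then in case (i) an extra sign would propagate into $\nu$ and invariance of $\widetilde{\psi}$ would fail, whereas passing to $\overline{S}$ in case (ii) decouples the sign of $\overline{\nu}$ from the $G$-word representing $-\mathbbm{1}_4$, thereby removing the parity ambiguity.
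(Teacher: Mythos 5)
Your proposal is correct and follows essentially the same route as the paper: a smooth, proper, free $\langle-\mathbbm{1}_4\rangle$-action yielding a degree-two covering, invariance of $\widetilde{\psi}$ via bilinearity of the wedge product, invariance of the orientation form for orientability of the quotient, and the area bound from Lemma \ref{bipolar_surface_conformal} divided by the covering degree. You merely spell out the sign bookkeeping for $\sigma(-\mathbbm{1}_4)$ and the well-definedness/freeness checks that the paper leaves implicit.
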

\begin{proof}
In each of the considered cases, the two-element group $\langle -\mathbbm{1}_{\R^4}\rangle$ acts smoothly, properly and freely on the manifolds $S$ and $\ov{S}$. Recalling the definition of the subgroup $G^p\subseteq G$ and Remark \ref{coveringmapGamma}, we note that $\langle -\mathbbm{1}_{\R^4}\rangle\cap G^p=\{\mathbbm{1}_{\R^4}\}$ for all $p\in \Delta$. Therefore, the corresponding orbit space projections yield smooth covering maps of degree two.
Moreover, the bilinearity of the wedge product yields that $\widetilde{\psi}$ is in both cases invariant under the respective action. 
The same holds, by the definition of the Gauss maps $\nu$ and $\ov{\nu}$, for the orientation forms $\omega$ and $\ov{\omega}$, implying that the quotient manifolds $S/\langle -\mathbbm{1}_{\R^4}\rangle$ and $\ov{S}/\langle -\mathbbm{1}_{\R^4}\rangle$ are orientable.
At last, the area estimates immediately result from Lemma \ref{bipolar_surface_conformal}.
\end{proof}

\section{Lawson's Bipolar $\widetilde{\xi}$- and $\widetilde{\eta}$-Surfaces}\label{xi_eta_surfaces}
Besides a general formulation, \cite{Lawson} presents a concrete application of the construction procedure for minimal surfaces in $\s^3$. 
The basic setting is described by positive integers $m$ and $k$ which specify a tessellation of $\mathbb{S}^3$ by congruent, geodesic tetrahedra. More detailed, the corresponding 1-skeleton is given by the set of great circles connecting equidistant points $Q_0, ..., Q_{2m-1}$ and $P_0,...,P_{2k-1}$ on two polar great circles. On that kind of lattice, Lawson introduced three distinct types of geodesic 4-gons satisfying the requirements to bound the initial piece of a closed minimal surface mentioned in the last section. In this way, he obtained the three families $(\xi_{m-1,k-1})$, $(\tau_{m,k})$ and $(\eta_{m-1,k-1})$ (note that the indices, depending on $m$ and $k$, are denoted such that the respective surface is based on a distance $\frac{\pi}{m}$ between $Q_j$ and $Q_{j+1}$, and $\frac{\pi}{k}$ between $P_i$ and $P_{j+1}$). The key point about these families is their different behavior concerning topology and embeddedness. At first, the surfaces $\xi_{m-1,k-1}$ yield examples of embedded and therefore orientable minimal surfaces of arbitrary genus $g>0$. Second, the surfaces $\tau_{m,k}$ are a family of immersed tori and Klein bottles, particularly including the Clifford torus $\tau_{1,1}$ (also realized by $\xi_{1,1}$). At last, the surfaces $\eta_{m-1,k-1}$ provide examples of non-orientable minimal surfaces for each genus, except for the real projective plane $\R P^2$ (in this context, also note that $\eta_{1,1}=\tau_{2,1}$, cf.\ \cite{Kusner}). For the $\tau$-family, \cite{Lawson} additionally provides corresponding parametrizations, which allow a very detailed analysis. For instance in \cite{Lapointe}, this is used for a topological classification of the corresponding bipolar surfaces $\widetilde{\tau}_{m,k}$. For a similar characterization of the other two families, we will now exploit the constructions and results from Section \ref{preimage_construction}. In the following let $m,k\in \Z_{\ge 2}$ and assume that $m>2$ or $k>2$ such that the Euler characteristics
\begin{align*}
\chi(\xi_{m-1,k-1})&=2\big(1-(m-1)(k-1)\big)\,,\\[5pt]
\chi(\eta_{m-1,k-1})&=\begin{cases}
1-(m-1)(k-1),&\textup{ if $k$ is even}\,;\\
2\big(1-(m-1)(k-1)\big),&\textup{ if $k$ is odd}
\end{cases}
\end{align*}
are negative, excluding the cases of $\xi_{1,1}=\tau_{1,1}$ and $\eta_{1,1}=\tau_{2,1}$ which were treated in \cite{Lapointe}. Let
\begin{align*}
P_i:=\begin{pmatrix}
\cos(i\cdot\nicefrac{\pi}{k})\\
\sin(i\cdot\nicefrac{\pi}{k})\\
0\\
0
\end{pmatrix},~ i\in\Z_{2k}\qquad,\qquad
Q_j:=\begin{pmatrix}
0\\
0\\
\cos(j\cdot\nicefrac{\pi}{m})\\
\sin(j\cdot\nicefrac{\pi}{m})\\
\end{pmatrix},~j\in\Z_{2m}
\end{align*}
be the points describing the tessellation of $\mathbb{S}^3$ used in \cite{Lawson}. In these terms, the geodesic polygon describing the orientable surface $\xi_{m-1,k-1}$ is given by the circuit
\begin{align}
\Gamma_{\xi_{m-1,k-1}}&:= P_0 Q_0 P_1 Q_1\,, \label{Gamma_xi}
\end{align}
where the successive vertices are connected by shortest arcs. Towards the corresponding group generated by Schwarz reflections, first note that the geodesic reflection $r_\gamma:\mathbb{S}^3\to\mathbb{S}^3$ across a great circle $\gamma$ in $\mathbb{S}^3\subseteq\R^4$ is the reflection at the 2-plane $P_\gamma$ such that $P_\gamma\cap \mathbb{S}^3=\gamma$, i.e.,
\begin{align*}
r_\gamma(x)=x^{||}-2 x^{\perp}
\end{align*}
for all $x=x^{||}+x^{\perp}\in\mathbb{S}^3$, where $x^{||}\in P_\gamma$ and $x^{\perp}\in P_\gamma^{\perp}$. Thereby, we determine the \textit{geodesic reflection $r_{ij}$ across the great circle $\gamma_{ij}$ through the points $P_i$ and $Q_j$}. We have
\begin{align*}
r_{00}=\begin{pmatrix}
\textbf{J}_2 &\textbf{0}\\
\textbf{0} & \textbf{J}_2
\end{pmatrix},
\quad
\textbf{J}_2:=\begin{pmatrix}
1 & \,~0\\
0 &-1
\end{pmatrix}
\end{align*}
and moreover, setting
\begin{align*}
\textbf{R}_\varphi:=\begin{pmatrix}
\cos(\varphi) & -\sin(\varphi)\\
\sin(\varphi) & \phantom{-}\cos(\varphi)
\end{pmatrix},~\varphi\in\R\,,
\end{align*}
as well as
\begin{align*}
\textbf{R}_\varphi^{\scriptscriptstyle (12)}:=\begin{pmatrix}
\textbf{R}_\varphi & \textbf{0}\\
\textbf{0}& \mathbbm{1}_2
\end{pmatrix},~\textbf{R}_\varphi^{\scriptscriptstyle (34)}:=\begin{pmatrix}
\mathbbm{1}_2& \textbf{0}\\
\textbf{0} & \textbf{R}_\varphi 
\end{pmatrix},
\end{align*}
we obtain
\begin{align*}
r_{ij}=r_{00}\cdot \textbf{R}_{\frac{2\pi}{k}\cdot i}^{\scriptscriptstyle (12)} \cdot \textbf{R}_{\frac{2\pi}{m}\cdot j}^{\scriptscriptstyle (34)}\,.
\end{align*}
Now, looking at \eqref{Gamma_xi}, the group generated by Schwarz reflections of the surface $\xi_{m-1, k-1}$ is, in the first place, given by
\begin{align}
G_{\xi_{m-1,k-1}}=\langle r_{00}, r_{01}, r_{11}, r_{10}\rangle\,.\label{G_xi_def}
\end{align}
Using 
\begin{align*}
r_{01}&=r_{00}\cdot \textbf{R}_{\frac{2\pi}{m}}^{\scriptscriptstyle (34)},\\
r_{11}&=r_{00}\cdot \textbf{R}_{\frac{2\pi}{k}}^{\scriptscriptstyle (12)}\cdot \textbf{R}_{\frac{2\pi}{m}}^{\scriptscriptstyle (34)}, \\
r_{10}&=r_{00}\cdot \textbf{R}_{\frac{2\pi}{k}}^{\scriptscriptstyle (12)}
\end{align*}
and conversely, since $r_{00}$ is self-inverse,
\begin{align}
\begin{aligned}
\textbf{R}_{\frac{2\pi}{k}}^{\scriptscriptstyle (12)}&=r_{00}\cdot r_{10}\\
\textbf{R}_{\frac{2\pi}{m}}^{\scriptscriptstyle (34)}&=r_{00}\cdot r_{01}\,,
\end{aligned}\label{rotations}
\end{align}
we find that
\begin{align*}
G_{\xi_{m-1,k-1}}&=\big\langle \textbf{R}_{\frac{2\pi}{k}}^{\scriptscriptstyle (12)},\textbf{R}_{\frac{2\pi}{m}}^{\scriptscriptstyle (34)}, r_{00}\big\rangle\,.
\end{align*}
As the block matrices $\textbf{R}_{\frac{2\pi}{k}}^{\scriptscriptstyle (12)}$ and $\textbf{R}_{\frac{2\pi}{m}}^{\scriptscriptstyle (34)}$ commute and in addition
\begin{align*}
\textbf{J}_2\cdot \textbf{R}_\varphi=\textbf{R}_\varphi^{-1}\cdot\textbf{J}_2
\end{align*} 
for all $\varphi\in\R$, this finally writes as
\begin{align}
G_{\xi_{m-1,k-1}}=\Bigg\{\Big(\textbf{R}_{\frac{2\pi}{k}}^{\scriptscriptstyle (12)}\Big)^\alpha\cdot \Big(\textbf{R}_{\frac{2\pi}{m}}^{\scriptscriptstyle (34)}\Big)^\beta\cdot r_{00}^\gamma:\alpha\in\Z_k,~\beta\in\Z_m,~\gamma\in\Z_2\Bigg\}\,.\label{G_xi}
\end{align}
For the parity $\sigma(g)$ of a group element
\begin{align*}
g=\Big(\textbf{R}_{\frac{2\pi}{k}}^{\scriptscriptstyle (12)}\Big)^\alpha\cdot \Big(\textbf{R}_{\frac{2\pi}{m}}^{\scriptscriptstyle (34)}\Big)^\beta\cdot r_{00}^\gamma\in G_{\xi_{m-1,k-1}}
\end{align*}
(cf.\ Definition \ref{parity}), we have that 
\begin{align}
\sigma(g)=\gamma\,,\label{sigma_xi}
\end{align}
since by \eqref{rotations}, $\textbf{R}_{\frac{2\pi}{k}}^{\scriptscriptstyle (12)}$ and $\textbf{R}_{\frac{2\pi}{m}}^{\scriptscriptstyle (34)}$ are given by an even number of Schwarz reflections.

Moreover, note that the subgroup $(G_{\xi_{m-1,k-1}})^{\Gamma_{\xi_{m-1,k-1}}}\subseteq G_{\xi_{m-1,k-1}}$ leaving the initial polygon invariant (as a point set) is trivial. To see this, consider for example the part $P_0 Q_0 P_1$ of $\Gamma_{\xi_{m-1,k-1}}$ which determines the angle $\frac{\pi}{k}$ at $Q_0$. For $g\in G_{\xi_{m-1,k-1}}$ (denoted as above), this polygonal arc is mapped onto
\begin{align}
g(P_0 Q_0 P_1)&=\Bigg[\Big(\textbf{R}_{\frac{2\pi}{k}}^{\scriptscriptstyle (12)}\Big)^\alpha\cdot \Big(\textbf{R}_{\frac{2\pi}{m}}^{\scriptscriptstyle (34)}\Big)^\beta\cdot r_{00}^\gamma\Bigg](P_0 Q_0 P_1)\notag\\[10pt]
&=\begin{cases}
P_{2\alpha} Q_{2\beta} P_{2\alpha+1}\quad\textup{if}~\gamma=0\,;\\[5pt]
P_{2\alpha} Q_{2\beta} P_{2\alpha-1}\quad\textup{if}~\gamma=1\,,
\end{cases}
\label{H_xi}
\end{align}
Now, if $g\in (G_{\xi_{m-1,k-1}})^{\Gamma_{\xi_{m-1,k-1}}}$, the prescribed angle yields that the candidates for the image of $P_0 Q_0 P_1$ under $g$ are
\begin{align*}
P_0 Q_0 P_1,~ P_1 Q_0 P_0,~P_1 Q_1 P_0,~ P_0 Q_1 P_1\,. 
\end{align*}
But combined with \eqref{H_xi}, this implies that we must have $g=\mathbbm{1}_4$.

\bigskip 
At this point, the results from Section \ref{preimage_construction} lead to the following conclusions concerning the bipolar surface $\widetilde{\xi}_{m-1,k-1}$.
\begin{theorem}\label{theorem_xi}
Let $m,\,k\in \Z_{\ge 2}$ such that $m>2$ or $k>2$. Then, the bipolar surface $\tilde{\xi}_{m-1,k-1}$ is orientable. Moreover,
\begin{enumerate}[label=\normalfont{(\roman*)}]
\item if both $m$ and $k$ are even, the Euler characteristic is 
\begin{align*}
\chi\Big(\tilde{\xi}_{m-1,k-1}\Big)=1-(m-1)(k-1)
\end{align*}
and we have
\begin{align*}
2\pi \max\{m, k\}\le\textup{area}\Big(\widetilde{\xi}_{m-1,k-1}\Big)<2\pi(mk+k-m)\,;
\end{align*}
\item if $m$ or $k$ is odd, the Euler characteristic is 
\begin{align*}
\chi\Big(\tilde{\xi}_{m-1,k-1}\Big)=2\big(1-(m-1)(k-1)\big)
\end{align*}
and we have
\begin{align*}
4\pi \max\{m, k\}\le\textup{area}\Big(\widetilde{\xi}_{m-1,k-1}\Big)<4\pi(mk+k-m)\,.
\end{align*}
\end{enumerate}
\end{theorem}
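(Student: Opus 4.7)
The plan is to apply the general framework of Section \ref{preimage_construction} to the explicit description of $G_{\xi_{m-1,k-1}}$ and its parity function given by \eqref{G_xi} and \eqref{sigma_xi}. First I observe that $S$ is orientable: since every element of $G_{\xi_{m-1,k-1}}$ is uniquely of the form $\big(\textbf{R}_{\frac{2\pi}{k}}^{\scriptscriptstyle(12)}\big)^\alpha \big(\textbf{R}_{\frac{2\pi}{m}}^{\scriptscriptstyle(34)}\big)^\beta r_{00}^\gamma$, the equation $e_G = \mathbbm{1}_4$ forces $\gamma = 0$, so every word representing the identity uses an even number of generating reflections and Proposition \ref{nonorientability} applies. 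Consequently the Gauss map $\nu$ of Construction \ref{normal_oriented} and the bipolar immersion $\widetilde{\psi} = \psi \wedge \nu$ are globally defined on $S$. Combined with the triviality of $(G_{\xi_{m-1,k-1}})^{\Gamma_{\xi_{m-1,k-1}}}$ established just before the statement, Corollary \ref{Eulercharacteristic} yields $\chi(S) = 2\big(1-(m-1)(k-1)\big)$.

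Next, I distinguish the two cases according to whether $-\mathbbm{1}_4 \in G_{\xi_{m-1,k-1}}$. A direct inspection of \eqref{G_xi} shows that $-\mathbbm{1}_4$ lies in the group exactly when $m$ and $k$ are both even, realized by $\alpha = k/2$, $\beta = m/2$, $\gamma = 0$, and in that case $\sigma(-\mathbbm{1}_4) = 0$. Theorem \ref{-1_cover}(i) then produces a smooth orientation-preserving covering $S \to S/\langle -\mathbbm{1}_4\rangle$ of degree $2$ onto an orientable quotient under which $\widetilde{\psi}$ descends, immediately yielding the orientability of $\widetilde{\xi}_{m-1,k-1}$ and a halving of the Euler characteristic, as required in case (i). When $m$ or $k$ is odd no such symmetry is available, so I expect $S$ itself to be the smallest fundamental domain, giving case (ii) with the full Euler characteristic $2(1-(m-1)(k-1))$.

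The main obstacle is to rule out further covering symmetries, i.e.\ to confirm that the proposed domains ($S/\langle -\mathbbm{1}_4 \rangle$ in case~(i) and $S$ in case~(ii)) really are smallest. For every remaining $g \in G_{\xi_{m-1,k-1}}$ I will check whether the map $[(h,p)] \mapsto [(gh,p)]$ leaves $\widetilde{\psi}$ invariant, using the representation
\begin{align*}
\widetilde{\psi}\big([(g,p)]\big) = (-1)^{\sigma(g)}\, g_\ast\big(f(p) \wedge n(p)\big),
\end{align*}
where $g_\ast$ denotes the induced action of $g \in \mathrm{SO}(4)$ on $\Lambda^2 \R^4 \cong \R^6$. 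The block-diagonal form of $\textbf{R}_{\frac{2\pi}{k}}^{\scriptscriptstyle(12)}$ and $\textbf{R}_{\frac{2\pi}{m}}^{\scriptscriptstyle(34)}$ reduces this to a purely algebraic question about wedge vectors, which I expect to settle by case distinction on $(\alpha,\beta,\gamma)$. Once minimality is verified, the Euler characteristic formulas follow by dividing $\chi(S)$ by the covering degree.

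Finally, the area bounds will follow from Lemma \ref{bipolar_surface_conformal}, which gives
\begin{align*}
\textup{area}\big(\widetilde{\psi}_{|S}\big) = 2\,\textup{area}\big(\xi_{m-1,k-1}\big) - 2\pi\chi(S).
\end{align*}
Dividing by the covering degree expresses $\textup{area}\big(\widetilde{\xi}_{m-1,k-1}\big)$ in terms of $\textup{area}(\xi_{m-1,k-1})$, and a short manipulation using $\chi(S) = 2(1-(m-1)(k-1))$ reduces the strict upper bounds in (i) and (ii) to a known estimate of the form $\textup{area}(\xi_{m-1,k-1}) < 4\pi k$ for Lawson's $\xi$-family. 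For the lower bounds I will apply the Li--Yau inequality to $\widetilde{\psi}$: the natural candidates for points of high multiplicity are the images of preimages of the polar vertices $P_i$ and $Q_j$, whose large isotropy groups $\langle \textbf{R}_{\frac{2\pi}{m}}^{\scriptscriptstyle(34)}\rangle$ and $\langle \textbf{R}_{\frac{2\pi}{k}}^{\scriptscriptstyle(12)}\rangle$ should produce coinciding wedges in $\Lambda^2 \R^4$ with multiplicity at least $\max\{m,k\}/2$ in case~(i) and $\max\{m,k\}$ in case~(ii), giving the asserted $2\pi\max\{m,k\}$ and $4\pi\max\{m,k\}$ lower bounds.
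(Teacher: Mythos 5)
Your overall skeleton matches the paper's: orientability and $\chi(S)$ via Proposition \ref{nonorientability} and Corollary \ref{Eulercharacteristic}, the degree-two quotient by $-\mathbbm{1}_4$ from Theorem \ref{-1_cover}(i) when $m$ and $k$ are both even, the upper area bound from Lemma \ref{bipolar_surface_conformal} together with the estimate $\textup{area}(\xi_{m-1,k-1})<4\pi k$, and Li--Yau at the vertex images for the lower bound. The genuine gap is in your treatment of the minimality of the fundamental domain. You propose to rule out further coverings by testing, for each remaining $g\in G_{\xi_{m-1,k-1}}$, whether $[(h,p)]\mapsto[(gh,p)]$ leaves $\widetilde{\psi}$ invariant. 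This is not sufficient: a covering map on $S$ under which $\widetilde{\psi}$ is invariant need not be induced by left multiplication by a group element (compare Remark \ref{coveringmapGamma}, where the self-map also acts nontrivially on the $\Delta$-factor), and more generally $\widetilde{\psi}$ could factor through a quotient that is not the orbit space of any of the group actions you enumerate. So even after your case distinction on $(\alpha,\beta,\gamma)$ succeeds, the claimed Euler characteristics --- in particular $\chi\big(\widetilde{\xi}_{m-1,k-1}\big)=\chi(S)$ in case (ii) and the halving in case (i) --- would remain unproved.

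The paper closes this gap differently: it computes the exact multiplicity of the image points $P_0\wedge\hat{P}_0$ and $Q_0\wedge\big(-\hat{Q}_0\big)$ (here one needs that $f$ and $n$ map into the convex hulls of $\Gamma_{\xi_{m-1,k-1}}$ and $\Gamma_{\xi_{m-1,k-1}}^*$ to see that $f(p)\wedge n(p)=e_1\wedge e_2$ forces $p=p_0$), and then compares the tangent planes of the $k$ (resp.\ $m$) sheets through such a point. Using that the polar variety branches at vertices with interior angle $<\frac{\pi}{2}$, the tangent plane at $\big[(e,p_0)\big]$ is $\textup{span}(e_2\wedge e_3,\,e_2\wedge e_4)$, and its translates under $\big(\textbf{R}_{\frac{2\pi}{k}}^{\scriptscriptstyle (12)}\big)^\alpha$ coincide with it only for $\alpha=0$ or, when $k$ is even, $\alpha=\frac{k}{2}$. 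Transversally intersecting tangent planes cannot be identified by \emph{any} covering map, so this argument excludes all further quotients at once, and it is also what yields Corollary \ref{embeddedness}. You should therefore replace the algebraic invariance check by a tangent-plane (or equivalent local) analysis at the points of higher multiplicity; your multiplicity count for the Li--Yau lower bound is on the right track, but note that in case (i) the relevant multiplicity is that of the immersion on $S/\langle-\mathbbm{1}_4\rangle$, namely $\max\{m,k\}/2$, which again requires knowing exactly which sheets are identified.
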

\begin{proof}
We use the notation from Construction \ref{construction} and Construction \ref{normal_oriented}. For a simpler notation, we define the points
\begin{align*}
\hat{P}_i:=\begin{pmatrix}
-\sin(i\cdot\nicefrac{\pi}{k})\\
\phantom{-}\cos(i\cdot\nicefrac{\pi}{k})\\
0\\
0
\end{pmatrix},~ i\in\Z_{2k}\qquad,\qquad
\hat{Q}_j=\begin{pmatrix}
0\\
0\\
-\sin(j\cdot\nicefrac{\pi}{m})\\
\phantom{-}\cos(j\cdot\nicefrac{\pi}{m})\\
\end{pmatrix},~j\in\Z_{2m}\,.
\end{align*}
The initial Gauß map $n:\Delta\to\s^3$ can be chosen such that at the vertices of $\Gamma_{\xi_{m-1,k-1}}$ we have
\begin{align*}
n(p_0)=n\big(f^{-1}(P_0)\big)=\hat{P}_0,\quad n\big(f^{-1}(Q_0)\big)=-\hat{Q}_0,\quad n\big(f^{-1}(P_1)\big)=-\hat{P}_1,\quad n\big(f^{-1}(Q_1)\big)=\hat{Q}_1\,.
\end{align*}
Connecting these vertices by shortest arcs yields the so-called \textit{polar polygon} $\Gamma_{\xi_{m-1,k-1}}^*$ of $\Gamma_{\xi_{m-1,k-1}}$ (cf.\ \cite{Lawson}, Section 10), which can be understood to bound the initial piece of the corresponding polar variety $\xi_{m-1,k-1}^*$ (cf. Theorem \ref{polar_surface_conformal}).

As a first step, we determine the multiplicity of the point $P_0\wedge\hat{P}_0=e_1\wedge e_2$ in the image of the bipolar immersion $\widetilde{\psi}\colon S\to\s^5$. Let therefore $p_0\in\partial\Delta$ be the point such that $f(p_0)=P_0=e_1$ and $n(p_0)=\hat{P}_0=e_2$.
We are looking for all $\big[(g,p)\big]\in S$ such that
\begin{align*}
\widetilde{\psi}\Big(\big[(g,p)\big]\Big)=\widetilde{\psi}\Big(\big[(e,p_0)\big]\Big).
\end{align*}
By definition of the map $\widetilde\psi=\psi\wedge\nu$, these are all the points $\big[(g,p)\big]\in S$ such that
\begin{align*}
(-1)^{\sigma(g)}\cdot (g\circ f)(p)\wedge (g\circ n)(p)= e_1\wedge e_2
\end{align*}
or equivalently, by relabelling the group elements, 
\begin{align}
f(p)\wedge n(p)=(-1)^{\sigma(g)}\cdot g(e_1)\wedge g(e_2)\,.\label{eq1}
\end{align}
Making use of \eqref{G_xi} and \eqref{sigma_xi}, we observe that
\begin{align*}
g(e_1)\wedge g(e_2)=\begin{cases}
\phantom{-}e_1\wedge e_2\quad\textup{if }\sigma(g)=0\,;\\
-e_1\wedge e_2\quad\textup{if }\sigma(g)=1\,.
\end{cases}
\end{align*}
Hence, \eqref{eq1} reduces to
\begin{align}
f(p)\wedge n(p)=e_1\wedge e_2\,.\label{eq2}
\end{align}
Clearly, \eqref{eq2} is satisfied by each point $\big[(g,p_0)\big]$ for arbitrary $g\in G_{\xi_{m-1,k-1}}$, and these are already all solutions.
Because, due to the properties of the wedge product (cf.\ Section \ref{wedge_product}), given any solution $\big[(g,p)\big]$ of \eqref{eq2}, we must have
\begin{align*}
f(p)&=\phantom{-}\cos(\phi)\, e_1+\sin(\phi)\,e_2\,,\\
n(p)&=-\sin(\phi)\,e_1+\cos(\phi)\,e_2
\end{align*}
for some $\phi\in [0,2\pi)$. Looking at the initial pieces of surface described by $f$ and $n$, which are both embedded in the convex hull of $\Gamma_{\xi_{m-1,k-1}}$ or $\Gamma_{\xi_{m-1,k-1}}^*$ we find that this is only satisfied for $\phi=0$, i.e., at the point $p_0$. 

At this point, we can conclude that
\begin{align*}
\widetilde{\psi}^{-1}\Big(P_0\wedge\hat{P}_0\Big)=\Big\{\big[(g,p_0)\big]:g\in G_{\xi_{m-1,k-1}}\Big\}.
\end{align*}
Therefore, since
\begin{align}
\big(G_{\xi_{m-1,k-1}}\big)^{p_0}=\langle r_{00}, r_{01}\rangle=\Bigg\{\Big(\textbf{R}_{\frac{2\pi}{m}}^{\scriptscriptstyle (34)}\Big)^\beta\cdot r_{00}^\gamma:\beta\in\Z_m,~\gamma\in\Z_2\Bigg\}\,,\label{G^p_xi}
\end{align}
as defined in \eqref{G^p}, the multiplicity of $P_0\wedge\hat{P}_0$ is given by
\begin{align*}
\mu_{\widetilde{\psi}}\Big(P_0\wedge\hat{P}_0\Big)&=\Big|\Big\{\big[(g,p_0)\big]:g\in G_{\xi_{m-1,k-1}}\Big\}\Big|=\frac{|G_{\xi_{m-1,k-1}}|}{\Big|\big(G_{\xi_{m-1,k-1}}\big)^{p_0}\Big|}=\frac{2mk}{2m}=k\,.
\end{align*}
Analogous steps lead to the conclusion that at the image point $$Q_0\wedge \big(-\hat{Q}_0\big)=\widetilde{\psi}\Big(\big[(e,q_0)\big]\Big)$$ the  multiplicity is
\begin{align*}
\mu_{\widetilde{\psi}}\Big(Q_0\wedge \big(-\hat{Q}_0\big)\Big)&=\Big|\Big\{\big[(g,q_0)\big]:g\in G_{\xi_{m-1,k-1}}\Big\}\Big|=\frac{|G_{\xi_{m-1,k-1}}|}{\Big|\big(G_{\xi_{m-1,k-1}}\big)^{q_0}\Big|}=\frac{2mk}{2k}=m\,.
\end{align*}
The question is whether these higher multiplicities result from a covering map on $S$ under which $\widetilde{\psi}$ is invariant. Clarification is obtained by considering the associated tangent planes to the bipolar surface at $P_0\wedge\hat{P}_0$ or $Q_0\wedge \big(-\hat{Q}_0\big)$.

Before we continue, note that since we assumed that $\chi(\xi_{m-1,k-1})<0$, the polar variety $\xi_{m-1,k-1}^*$ has branching points (cf.\ Proposition \ref{branchpoints}). According to \cite{Lawson}, p.\ 361, these particularly occur at the vertices of $\Gamma_{\xi_{m-1,k-1}}$ with interior angles $<\frac{\pi}{2}$. At such points, the second term of the differential $\dup\widetilde{\psi}=\dup\psi\wedge\nu +\psi\wedge\dup \nu$ vanishes.

Now, suppose that $m>2$ and denote by $P_{p_0}^{\scriptscriptstyle (0)}$ the tangent plane to the bipolar surface at $\widetilde{\psi}\big(\big[(e,p_0)\big]\big)$. 
Since in that case $\nu$ has a branching point at $\big[(e,p_0)\big]$ and the tangent plane to $\xi_{m-1,k-1}$ at $\psi\big(\big[(e,p_0)\big]\big)$ is spanned by $e_3$ and $e_4$, we find that 
\begin{align*}
P_{p_0}^{\scriptscriptstyle (0)}=\textup{span}\big(e_2\wedge e_3, e_2\wedge e_4)\,.
\end{align*}
Consequently, recalling \eqref{G^p_xi}, the $k$ tangent planes at $\widetilde{\psi}\Big(\Big[\Big(\Big(\textbf{R}_{\frac{2\pi}{k}}^{\scriptscriptstyle (12)}\Big)^\alpha,p_0\Big)\Big]\Big)$ are
\begin{align*}
P_{p_0}^{\scriptscriptstyle (\alpha)}:=\textup{span}\Bigg(\Big(\textbf{R}_{\frac{2\pi}{k}}^{\scriptscriptstyle (12)}\Big)^\alpha(e_2)\wedge \Big(\textbf{R}_{\frac{2\pi}{k}}^{\scriptscriptstyle (12)}\Big)^\alpha(e_3), \Big(\textbf{R}_{\frac{2\pi}{k}}^{\scriptscriptstyle (12)}\Big)^\alpha(e_2)\wedge \Big(\textbf{R}_{\frac{2\pi}{k}}^{\scriptscriptstyle (12)}\Big)^\alpha(e_4)\Bigg),~\alpha\in \Z_k\,,
\end{align*}
and a computation shows that $P_{p_0}^{\scriptscriptstyle (\alpha)}=P_{p_0}^{\scriptscriptstyle (0)}$ if and only if $\alpha=0$ or, when $k$ is even, $\alpha=\frac{k}{2}$. 

Analogously, if $k>2$, the $m$ tangent planes to the bipolar surface at $Q_0\wedge \big(-\hat{Q}_0\big)=f(q_0)\wedge n(q_0)$ are given by
\begin{align*}
P_{q_0}^{\scriptscriptstyle (\beta)}:=\textup{span}\Bigg(\Big(\textbf{R}_{\frac{2\pi}{m}}^{\scriptscriptstyle (34)}\Big)^\beta(e_1)\wedge \Big(\textbf{R}_{\frac{2\pi}{m}}^{\scriptscriptstyle (34)}\Big)^\beta(e_4), \Big(\textbf{R}_{\frac{2\pi}{k}}^{\scriptscriptstyle (34)}\Big)^\beta(e_2)\wedge \Big(\textbf{R}_{\frac{2\pi}{m}}^{\scriptscriptstyle (34)}\Big)^\beta(e_4)\Bigg),~\beta\in \Z_m\,,
\end{align*}
with $P_{q_0}^{\scriptscriptstyle (\beta)}=P_{q_0}^{\scriptscriptstyle (0)}$ if and only if $\beta=0$ or, when $m$ is even, $\beta=\frac{m}{2}$.

Whenever $m$ or $k$ is odd, this shows that the bipolar surface has $\mu$ transversally intersecting tangent planes at some point of multiplicity $\mu$, implying that $S$ is a fundamental domain of $\widetilde{\psi}$, i.e.,
\begin{align*}
\chi\Big(\widetilde{\xi}_{m-1,k-1}\Big)=\chi(S)\,.
\end{align*}
If both $m$ and $k$ are even, we have 
\begin{align*}
-\mathbbm{1}_4=\big(R_{\nicefrac{2\pi}{k}}^{\scriptscriptstyle (12)}\big)^{\frac{k}{2}}\cdot\big(R_{\nicefrac{2\pi}{m}}^{\scriptscriptstyle (34)}\big)^{\frac{m}{2}}\in G_{\xi_{m-1,k-1}}
\end{align*}
and in particular
\begin{align*}
\sigma(-\mathbbm{1}_4)=0\,.
\end{align*}
In this light, the occurrence of the pairs of parallel tangent planes is exactly due to the covering map from Theorem \ref{-1_cover} (i). Since $m>2$ or $k>2$, at least for one of the considered image points the planes corresponding to distinct pairs intersect transversally. So we deduce that $S/\langle-\mathbbm{1}_4\rangle$ is a fundamental domain of $\widetilde{\psi}$ and we have
\begin{align*}
\chi\Big(\widetilde{\xi}_{m-1,k-1}\Big)=\chi\big(S/\langle-\mathbbm{1}_4\rangle\big)=\frac{\chi(S)}{2}\,.
\end{align*}
It remains to prove the area bounds. The lower bounds are obtained by the Li-Yau inequality (cf.\ Theorem 6 in \cite{LiYau}, combined with Proposition 1.2.3 from \cite{KuwertSchaetzle}) applied to the vertex points, as for example the ones we studied above. If both $m$ and $k$ are even, these are points of multiplicity $\frac{m}{2}$ and $\frac{k}{2}$. Otherwise, the detected multiplicities are $m$ and $k$. Furthermore, together with the findings from above, the upper bounds on the area are a direct result of Lemma \ref{bipolar_surface_conformal} and the area bounds for the surface $\xi_{m-1,k-1}$ from Proposition 3.2 in \cite{Kusner}, i.e.,
\begin{align*}
\textup{area}(\xi_{m-1,k-1})<4\pi k\,.
\end{align*}
\end{proof}
\noindent We proceed with the $\eta$-family. The surface $\eta_{m-1,k-1}$ is based on the geodesic polygon
\begin{align*}
\Gamma_{\eta_{m-1,k-1}}&:= Q_0 P_1 Q_1 [P_0] (-Q_1)\,, 
\end{align*}
where the notation $[\,\cdot\,]$ indicates the choice of the arc of length $\pi$. Thus, its group generated by Schwarz reflections is given by
\begin{align}
G_{\eta_{m-1,k-1}}=\langle r_{10}, r_{11}, r_{01}, r_Q\rangle\,,\label{G_eta_def}
\end{align}
where $r_Q$ denotes the geodesic reflection at $\gamma_Q:=\{x\in\mathbb{S}^3: x_1=x_2=0\}$. To study $G_{\eta_{m-1,k-1}}$ in more detail, we distinguish between the case of $k$ being even, when $\eta_{m-1, k-1}$ is non-orientable, and the case of $k$ being odd, when $\eta_{m-1, k-1}$ is orientable.
At first, let $k$ be even. Since
\begin{align}
(r_{01}\cdot r_{11})^\frac{k}{2}=\Big(\textbf{R}_{\frac{2\pi}{k}}^{\scriptscriptstyle (12)}\Big)^\frac{k}{2}=r_Q\,,\label{rQ}
\end{align}
$r_Q$ can be dropped as a generator. Referring to Proposition \ref{nonorientability}, this relation is equivalent to
\begin{align*}
\mathbbm{1}_4= (r_{11}\cdot r_{01})^\frac{k}{2}\cdot r_Q
\end{align*}
and hence shows that $\eta_{m-1,k-1}$ is non-orientable. Now, analogous to the case of the $\xi$-family, it follows that
\begin{align}
G_{\eta_{m-1,k-1}}
=\big\langle \textbf{R}_{\frac{2\pi}{k}}^{\scriptscriptstyle (12)},\textbf{R}_{\frac{2\pi}{m}}^{\scriptscriptstyle (34)}, r_{00}\big\rangle
=\Big\{\Big(\textbf{R}_{\frac{2\pi}{k}}^{\scriptscriptstyle (12)}\Big)^\alpha\cdot \Big(\textbf{R}_{\frac{2\pi}{m}}^{\scriptscriptstyle (34)}\Big)^\beta\cdot r_{00}^\gamma:\alpha\in\Z_k,~\beta\in\Z_m,~\gamma\in\Z_2\Big\}\,.\label{G_eta_nor}
\end{align}
Let now $k$ be odd. In this case, we cannot drop the generator $r_Q$. By the fact that $r_Q$ commutes with all the other generators from \eqref{G_eta_def}, we have
\begin{align*}
G_{\eta_{m-1,k-1}}&\cong \langle r_Q\rangle \times \langle r_{10}, r_{11}, r_{01}\rangle\,,
\end{align*}
i.e.,
\begin{align}
G_{\eta_{m-1,k-1}}
&=\Bigg\{r_Q^\gamma\cdot \Big(\textbf{R}_{\frac{2\pi}{k}}^{\scriptscriptstyle (12)}\Big)^\alpha\cdot \Big(\textbf{R}_{\frac{2\pi}{m}}^{\scriptscriptstyle (34)}\Big)^\beta\cdot r_{00}^\delta:\alpha\in\Z_k,~\beta\in\Z_m,~\gamma,~\delta\in\Z_2\Bigg\}\,.\label{G_eta_or}
\end{align}
For the parity $\sigma(g)$ of 
\begin{align*}
g=r_Q^\gamma\cdot\big(R_{\nicefrac{2\pi}{k}}^{\scriptscriptstyle (12)}\big)^\alpha\cdot \big(R_{\nicefrac{2\pi}{m}}^{\scriptscriptstyle (34)}\big)^\beta\cdot r_{00}^\delta \in G_{\eta_{m-1,k-1}}\,,
\end{align*}
we have
\begin{align}
\sigma(g)=\gamma+\delta\,.\label{sigma_eta}
\end{align}
Furthermore, note that independently of the parities of $m$ and $k$, we have that the subgroup of $\textup{O}(4)$ which leaves 
\begin{align*}
\Gamma_{\eta_{m-1,k-1}}= Q_0 P_1 Q_1 [P_0] (-Q_1) 
\end{align*}
invariant as a point set is trivial. To see this, one can first consider the possible images under a symmetry of $Q_1 [P_0] (-Q_1)$, the only arc of length $\pi$, and then images of the piece $Q_0 P_1 Q_1$ if $m>2$, or images of $P_1 Q_1 [P_0] (-Q_1)$ if $k>2$.

\bigskip
We finally arrive at the topological classification of the bipolar $\tilde{\eta}$-family.
\begin{theorem}\label{theorem_eta}
Let $m,\,k\in \Z_{\ge 2}$ such that $m>2$ or $k>2$. Then, the bipolar surface $\tilde{\eta}_{m-1,k-1}$ is orientable. Moreover,
\begin{enumerate}[label=\normalfont{(\roman*)}]
\item if both $m$ and $k$ are even, the Euler characteristic is 
\begin{align*}
\chi\Big(\tilde{\eta}_{m-1,k-1}\Big)=1-(m-1)(k-1)
\end{align*}
and we have
\begin{align*}
2\pi \max\{m, k\}\le\textup{area}\Big(\widetilde{\eta}_{m-1,k-1}\Big)<2\pi(3mk -3k - m)\,;
\end{align*}
\item if $m$ or $k$ is odd, the Euler characteristic is 
\begin{align*}
\chi\Big(\tilde{\eta}_{m-1,k-1}\Big)= 2\big(1-(m-1)(k-1)\big)
\end{align*}
and we have
\begin{align*}
4\pi \max\{m, k\}\le\textup{area}\Big(\widetilde{\eta}_{m-1,k-1}\Big)<4\pi(3mk -3k - m)\,.
\end{align*}
%
\end{enumerate}
\end{theorem}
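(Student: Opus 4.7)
The plan is to mirror the proof of Theorem \ref{theorem_xi} step by step, using the constructions from Section \ref{preimage_construction} together with the detailed analysis of $G_{\eta_{m-1,k-1}}$ recorded above. First I would decide, for each combination of parities of $m$ and $k$, whether to work on $S$ or on the orientable double cover $\overline S$ from Construction \ref{doublecover}, and whether Theorem \ref{-1_cover} gives a further $-\mathbbm{1}_4$-reduction. Since $\eta_{m-1,k-1}$ is non-orientable exactly when $k$ is even (recall \eqref{rQ} and Proposition \ref{nonorientability}), $\overline S$ is needed precisely in that case. Combining \eqref{G_eta_nor}, \eqref{G_eta_or} and the parity formula \eqref{sigma_eta}, a short check shows that $-\mathbbm{1}_4\in G_{\eta_{m-1,k-1}}$ with $\sigma(-\mathbbm{1}_4)=0$ if and only if both $m$ and $k$ are even, realised by $(\mathbf{R}^{(12)}_{2\pi/k})^{k/2}(\mathbf{R}^{(34)}_{2\pi/m})^{m/2}$; in every other subcase $-\mathbbm{1}_4$ is either absent from $G$ or, for $k$ odd and $m$ even, present only in the form $r_Q\cdot(\mathbf{R}^{(34)}_{2\pi/m})^{m/2}$ with $\sigma=1$. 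Thus Theorem \ref{-1_cover} produces an orientation-preserving cover precisely in case (i).

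Next, following the vertex analysis from the $\xi$-proof, I would identify the interior angles of $\Gamma_{\eta_{m-1,k-1}}$ as $\pi/2$ at $Q_0$ and $-Q_1$, $\pi/m$ at $P_1$, and $\pi/k$ at $Q_1$, so that by Proposition \ref{branchpoints} the Gauss map $\nu$ has branch points at $f^{-1}(P_1)$ when $m>2$ and at $f^{-1}(Q_1)$ when $k>2$. The corresponding stabilizers are $G^{p_1}=\langle r_{10},r_{11}\rangle$ of order $2m$ and $G^{q_1}=\langle r_{11},r_{01}\rangle$ of order $2k$ (enlarged by $\langle r_Q\rangle$ when $k$ is odd, since $r_Q$ also fixes $Q_1$). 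Choosing the initial normal so that $n(f^{-1}(P_1))=-\hat P_1$ and $n(f^{-1}(Q_1))=\hat Q_1$, and repeating the wedge-product argument from the proof of Theorem \ref{theorem_xi} at the image points $P_1\wedge(-\hat P_1)$ and $Q_1\wedge\hat Q_1$, one finds that the preimages are full orbits of cardinality $|G|/|G^{p_1}|$ resp.\ $|G|/|G^{q_1}|$. At a branch point $\dup\widetilde\psi=\dup\psi\wedge\nu$, so the candidate tangent planes take the form $g(\hat P_1)\wedge g(\mathrm{span}(e_3,e_4))$, and two of them coincide precisely when $g_1^{-1}g_2\in\{\mathbbm{1}_4,-\mathbbm{1}_4\}$; since $m>2$ or $k>2$, the transversality at one of the two vertex points rules out any further identification.

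Combining this with Corollary \ref{Eulercharacteristic}, which with the above angles gives $\chi(S)=|G|\cdot(-1/2+1/(2m)+1/(2k))$, yields $\chi(\widetilde\eta_{m-1,k-1})=\chi(\overline S)/2=1-(m-1)(k-1)$ in case (i), and $\chi(\widetilde\eta_{m-1,k-1})=2(1-(m-1)(k-1))$ in case (ii) (equal to $\chi(\overline S)$ when $k$ is even and $m$ is odd, and to $\chi(S)$ when $k$ is odd). Orientability follows from Theorem \ref{-1_cover}(ii) in case (i), from orientability of $\overline S$ in the subcase $k$ even and $m$ odd of case (ii), and from orientability of $S$ in the two $k$-odd subcases; notably, when $-\mathbbm{1}_4\in G$ has $\sigma=1$ (as for $k$ odd, $m$ even), a direct bilinearity computation gives $\widetilde\psi\circ(-\mathbbm{1}_4)=-\widetilde\psi$, so $-\mathbbm{1}_4$ does not factor $\widetilde\psi$ and the domain is not reduced. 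The lower area bounds then follow from the Li-Yau inequality (\cite{LiYau}) applied at the detected image points, with post-cover multiplicities $\max\{m,k\}/2$ in case (i) and $\max\{m,k\}$ in case (ii), while the upper bounds are direct consequences of Lemma \ref{bipolar_surface_conformal} together with the area estimate for $\eta_{m-1,k-1}$ from \cite{Kusner}. The main obstacle will be the parity bookkeeping in case (ii): the interaction between $r_Q$, $r_{00}$ and the existence/parity of $-\mathbbm{1}_4$ in $G$ must be checked carefully in each of the three subcases to ensure that no hidden reduction slips in.
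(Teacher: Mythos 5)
Your strategy is the paper's: split on the parity of $k$ (working on the orientable double cover $\ov{S}$ from Construction \ref{doublecover} exactly when $k$ is even), invoke Theorem \ref{-1_cover} only when $m$ and $k$ are both even, compute multiplicities and tangent planes at the vertices $P_1$ and $Q_1$, and finish with Li--Yau and Lemma \ref{bipolar_surface_conformal}; your bookkeeping of when $-\mathbbm{1}_4\in G$ and of its parity is also correct. However, two intermediate claims are wrong as stated. First, $G^{q_1}$ is \emph{not} enlarged by $\langle r_Q\rangle$ when $k$ is odd. Although $Q_1\in\gamma_Q$, the group $G^{q_1}$ must consist of exactly those elements producing \emph{neighbouring} pieces at $Q_1$, i.e.\ the dihedral group $\langle r_{11},r_{01}\rangle$ of order $2k$, whose $2k$ sectors of angle $\tfrac{\pi}{k}$ fill the full angle $2\pi$ around the vertex; for $k$ odd the piece $r_Q\big(f(\Delta)\big)$ passes through $Q_1$ on a \emph{different} sheet. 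Gluing $(e,q_1)\sim(r_Q,q_1)$ would produce a cone angle $4\pi$, so $S$ would not be a manifold (and would contradict Corollary \ref{Eulercharacteristic} with the angles $\tfrac{\pi}{2},\tfrac{\pi}{2},\tfrac{\pi}{m},\tfrac{\pi}{k}$); moreover $\nu$ would be ill-defined there, since $\sigma(r_Q)=1$ while $r_Q$ fixes $\hat Q_1$, forcing $\nu$ to equal both $\hat Q_1$ and $-\hat Q_1$ at the same point of $S$.

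Second, for $k$ odd the preimages are \emph{not} full $G$-orbits: writing $g=r_Q^{\gamma}\big(\textbf{R}_{\frac{2\pi}{k}}^{\scriptscriptstyle (12)}\big)^{\alpha}\big(\textbf{R}_{\frac{2\pi}{m}}^{\scriptscriptstyle (34)}\big)^{\beta}r_{00}^{\delta}$ one finds $(-1)^{\sigma(g)}g(e_1)\wedge g(e_2)=(-1)^{\gamma}\,e_1\wedge e_2$, so only the index-two subgroup $\{\gamma=0\}$ contributes, and the multiplicity at $P_1\wedge\big(-\hat P_1\big)$ is $k$, not $|G|/|G^{p_1}|=2k$; the other half of the orbit lands on the antipodal $2$-vector. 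The analogous sign computation at $Q_1\wedge\hat Q_1$ gives multiplicity $\tfrac{2mk}{2k}=m$, which agrees with your $|G|/|\langle r_{11},r_{01},r_Q\rangle|$ only because $r_Q$ is central and contributes the same factor of two to numerator and denominator. Neither slip changes the stated Euler characteristics or area bounds --- the tangent-plane separation and the Li--Yau input $\max\{m,k\}$ survive --- but both belong precisely to the parity bookkeeping you yourself flag as the remaining work, and they must be repaired before the argument is complete.
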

\begin{proof}
The initial Gauß map $n:\Delta\to\s^3$ can be chosen such that we have
\begin{align*}
n\big(f^{-1}(Q_0)\big)=\hat{P}_1,\quad n\big(f^{-1}(P_1)\big)=-\hat{P}_1,\quad n\big(f^{-1}(Q_1)\big)=\hat{Q}_1,\quad n\big(f^{-1}(-Q_1)\big)=\hat{P}_0\,,
\end{align*}
at the vertices of $\Gamma_{\eta_{m-1,k-1}}$. Connected by shortest arcs, these values describe the polar polygon $\Gamma_{\eta_{m-1,k-1}}^*$ of $\Gamma_{\eta_{m-1,k-1}}$, where we note that the arc from $\hat{P}_1$ to $-\hat{P}_1$ runs across $-\hat{Q}_0$.

\bigskip
If $k$ is even, then $\eta_{m-1,k-1}$ is non-orientable and we use the notation from Construction \ref{doublecover}. In this setting, the multiplicity of an image point $\widetilde{\psi}\Big(\big[(0,e,p_0)\big]\Big)$ is determined by the numbers of solutions $\big[(s,g,p)\big]\in \widetilde{S}$ of
\begin{align}
f(p)\wedge n(p)=(-1)^s g\big(f(p_0)\big)\wedge g\big(n(p_0)\big)\,.\label{eq3}
\end{align}
Now, if $p_0\in\partial\Delta$ is such that $f(p_0)=P_1$ or $f(p_0)=Q_1$, then the characterization of the group from \eqref{G_eta_nor} implies that \eqref{eq3} is equivalent to
\begin{align*}
f(p_0)\wedge n(p_0)=(-1)^{s+\gamma} f(p_0)\wedge n(p_0)\,.
\end{align*}
Consequently, we have
\begin{align*}
\mu_{\widetilde{\psi}}\Bigg(\widetilde{\psi}\Big(\big[(0,e,p_0)\big]\Big)\Bigg)=\frac{1}{2}\cdot \frac{2\cdot |G_{\eta_{m-1,k-1}}|}{\Big|\big(G_{\eta_{m-1,k-1}}\big)^{p_0}\Big|}\,,
\end{align*}
i.e., 
\begin{align*}
\mu_{\widetilde{\psi}}\Big(P_1\wedge\big(-\hat{P}_1\big)\Big)&=k\,,\quad
\mu_{\widetilde{\psi}}\Big(Q_1\wedge Q_1\Big)=m\,.
\end{align*}
Then, as in the proof of Theorem \ref{theorem_xi}, considering the different tangent planes at these points of higher multiplicity
and additionally using Theorem \ref{-1_cover} (ii) if $m$ is even, yields that a fundamental domains of $\widetilde{\psi}$ is given by $\widetilde{S}$ if $m$ is odd, and by $\widetilde{S}/\langle -\mathbbm{1}_4\rangle$ if $m$ is even. 

\bigskip
In contrast, if $k$ is odd, then $\eta_{m-1,k-1}$ is orientable and we are in the setting of Construction \ref{construction} and Construction \ref{normal_oriented}. Considering the image point $P_1\wedge\big(-\hat{P}_1\big)=-e_1\wedge e_2$ and using \eqref{G_eta_or} as well as \eqref{sigma_eta}, it follows analogously as in the proof of Theorem \ref{theorem_xi} that $S$ is a fundamental domain for $\widetilde{\psi}$.   

Finally, the area bounds follow analogously as in the proof of Theorem \ref{theorem_xi} by the Li-Yau inequality, by the formula from Lemma \ref{bipolar_surface_conformal} (holding on the orientable double cover $\ov{S}$ in each of the non-orientable cases) and by Proposition 3.4 from \cite{Kusner}, i.e.,
\begin{align*}
\textup{area}(\eta_{m-1,k-1})<2\pi(m-1)k \quad\textup{if $k$ is even},
\end{align*}
which we completed by
\begin{align*}
\textup{area}(\eta_{m-1,k-1})<4\pi(m-1)k\quad\textup{if $k$ is odd}.
\end{align*}
The latter follows analogously as in \cite{Kusner}, i.e., by the bound for the initial minimal disk, multiplied by the order of the group generated by Schwarz reflections (which is twice the order of the former case).
\end{proof}

\bigskip
As both for the surfaces $\widetilde{\xi}_{m-1,k-1}$ and $\widetilde{\eta}_{m-1,k-1}$ we detected transversally intersecting tangent planes, the following is immediate.
\begin{corollary}\label{embeddedness}
Let $m,\,k\in \Z_{\ge 2}$ such that $m>2$ or $k>2$. Then, $\widetilde{\xi}_{m-1,k-1}$ and $\widetilde{\eta}_{m-1,k-1}$ are not embedded.
\end{corollary}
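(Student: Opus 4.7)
My plan is to reduce the corollary to the tangent-plane analysis already carried out in the proofs of Theorem~\ref{theorem_xi} and Theorem~\ref{theorem_eta}. At the vertex-type image points studied there---namely $P_0\wedge\hat{P}_0$ and $Q_0\wedge(-\hat{Q}_0)$ for $\widetilde{\xi}_{m-1,k-1}$, and $P_1\wedge(-\hat{P}_1)$ and $Q_1\wedge Q_1$ for $\widetilde{\eta}_{m-1,k-1}$---the preimages were enumerated and their tangent planes in $T_P\mathbb{S}^5$ were computed as wedges involving the iterated block rotations $\bigl(\textbf{R}_{2\pi/k}^{\scriptscriptstyle(12)}\bigr)^\alpha$ and $\bigl(\textbf{R}_{2\pi/m}^{\scriptscriptstyle(34)}\bigr)^\beta$. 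That comparison showed that two such tangent planes coincide precisely when the corresponding group elements differ by $-\mathbbm{1}_4$, and meet transversally otherwise.

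First, I would recall this observation and note that the hypothesis $m>2$ or $k>2$ ensures that at (at least) one of the two chosen image points there remain two preimages carrying transversal tangent planes, even after quotienting by the $\langle-\mathbbm{1}_4\rangle$-action that identifies preimages in the all-even case. Second, I would invoke the elementary principle that a smooth immersion cannot be an embedding when it sends two distinct points of its domain to the same image point with distinct tangent 2-planes at that point: a smoothly embedded 2-submanifold of $\mathbb{S}^5$ has a unique tangent plane at each of its points, so the two locally embedded sheets produced by the immersion would have to coincide in a neighborhood of the image point, contradicting the transversality.

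The only real obstacle is bookkeeping---making sure that the transversal self-intersection at the chosen vertex-type point is not removed by some covering map of the fundamental domain that we have not yet accounted for. This is exactly what the restriction $m>2$ or $k>2$ prevents: it forces additional distinct rotation blocks to appear beyond the $\alpha=0$, $\alpha=k/2$ (or analogous $\beta$) pairs that the $\langle-\mathbbm{1}_4\rangle$-quotient already identifies. Given this, the proof reduces to a short appeal to the tangent-plane computations in the proofs of Theorems~\ref{theorem_xi} and \ref{theorem_eta} followed by the uniqueness-of-tangent-plane argument above.
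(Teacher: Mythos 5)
Your proposal is correct and follows essentially the same route as the paper: the corollary is stated as an immediate consequence of the transversally intersecting tangent planes detected at the vertex-type image points in the proofs of Theorems \ref{theorem_xi} and \ref{theorem_eta}. You simply spell out the (correct) bookkeeping about the $\langle-\mathbbm{1}_4\rangle$-quotient and the uniqueness-of-tangent-plane principle that the paper leaves implicit.
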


\bibliographystyle{plain} 
\bibliography{bibliography2.bib}

\begin{thebibliography}{10}

\bibitem{Brendle}
Simon Brendle.
\newblock Minimal surfaces in {$S^3$}: a survey of recent results.
\newblock {\em Bull. Math. Sci.}, 3(1):133--171, 2013.

\bibitem{CS}
Jaigyoung Choe and Marc Soret.
\newblock New minimal surfaces in {$\Bbb S^3$} desingularizing the {C}lifford
  tori.
\newblock {\em Math. Ann.}, 364(3-4):763--776, 2016.

\bibitem{Courant}
Richard Courant.
\newblock {\em Dirichlet's principle, conformal mapping, and minimal surfaces}.
\newblock Springer-Verlag, New York-Heidelberg, 1977.
\newblock With an appendix by M. Schiffer, Reprint of the 1950 original.

\bibitem{HeinzHildebrandt}
Erhard Heinz and Stefan Hildebrandt.
\newblock Some remarks on minimal surfaces in {R}iemannian manifolds.
\newblock {\em Comm. Pure Appl. Math.}, 23:371--377, 1970.

\bibitem{MBH}
Jonas Hirsch and Elena M\"{a}der-Baumdicker.
\newblock A note on {W}illmore minimizing {K}lein bottles in {E}uclidean space.
\newblock {\em Adv. Math.}, 319:67--75, 2017.

\bibitem{JNP}
Dmitry Jakobson, Nikolai Nadirashvili, and Iosif Polterovich.
\newblock Extremal metric for the first eigenvalue on a {K}lein bottle.
\newblock {\em Canad. J. Math.}, 58(2):381--400, 2006.

\bibitem{KY}
Nikolaos Kapouleas and Seong-Deog Yang.
\newblock Minimal surfaces in the three-sphere by doubling the {C}lifford
  torus.
\newblock {\em Amer. J. Math.}, 132(2):257--295, 2010.

\bibitem{KPS}
H.~Karcher, U.~Pinkall, and I.~Sterling.
\newblock New minimal surfaces in {$S^3$}.
\newblock {\em J. Differential Geom.}, 28(2):169--185, 1988.

\bibitem{Kusner}
Rob Kusner.
\newblock Comparison surfaces for the {W}illmore problem.
\newblock {\em Pacific J. Math.}, 138(2):317--345, 1989.

\bibitem{KuwertSchaetzle}
Ernst Kuwert and Reiner Sch\"{a}tzle.
\newblock The {W}illmore functional.
\newblock In {\em Topics in modern regularity theory}, volume~13 of {\em CRM
  Series}, pages 1--115. Ed. Norm., Pisa, 2012.

\bibitem{Lapointe}
Hugues Lapointe.
\newblock Spectral properties of bipolar minimal surfaces in {$\mathbb S^4$}.
\newblock {\em Differential Geom. Appl.}, 26(1):9--22, 2008.

\bibitem{Lawson}
H.~Blaine Lawson, Jr.
\newblock Complete minimal surfaces in {$S^{3}$}.
\newblock {\em Ann. of Math. (2)}, 92:335--374, 1970.

\bibitem{Lawson_Sn}
H.~Blaine Lawson, Jr.
\newblock The global behavior of minimal surfaces in {$S^{n}$}.
\newblock {\em Ann. of Math. (2)}, 92:224--237, 1970.

\bibitem{Lawson_Lecture}
H.~Blaine Lawson, Jr.
\newblock {\em Lectures on minimal submanifolds. {V}ol. {I}}, volume~14 of {\em
  Monograf\'{\i}as de Matem\'{a}tica [Mathematical Monographs]}.
\newblock Instituto de Matem\'{a}tica Pura e Aplicada, Rio de Janeiro, 1977.

\bibitem{Lee}
John~M. Lee.
\newblock {\em Introduction to smooth manifolds}, volume 218 of {\em Graduate
  Texts in Mathematics}.
\newblock Springer, New York, second edition, 2013.

\bibitem{LiYau}
Peter Li and Shing~Tung Yau.
\newblock A new conformal invariant and its applications to the {W}illmore
  conjecture and the first eigenvalue of compact surfaces.
\newblock {\em Invent. Math.}, 69(2):269--291, 1982.

\bibitem{WillmoreConjecture}
Fernando~C. Marques and Andr\'{e} Neves.
\newblock Min-max theory and the {W}illmore conjecture.
\newblock {\em Ann. of Math. (2)}, 179(2):683--782, 2014.

\bibitem{MeeksYau}
William~H. Meeks, III and Shing~Tung Yau.
\newblock The classical {P}lateau problem and the topology of three-dimensional
  manifolds. {T}he embedding of the solution given by {D}ouglas-{M}orrey and an
  analytic proof of {D}ehn's lemma.
\newblock {\em Topology}, 21(4):409--442, 1982.

\bibitem{Morrey}
Charles~B. Morrey, Jr.
\newblock {\em Multiple integrals in the calculus of variations}.
\newblock Die Grundlehren der mathematischen Wissenschaften, Band 130.
  Springer-Verlag New York, Inc., New York, 1966.

\bibitem{Penskoi}
A.~V. Penskoi.
\newblock Extremal metrics for eigenvalues of the {Laplace}-{Beltrami} operator
  on surfaces.
\newblock {\em Russ. Math. Surv.}, 68(6):1073--1130, 2013.

\bibitem{Takahashi}
Tsunero Takahashi.
\newblock Minimal immersions of {R}iemannian manifolds.
\newblock {\em J. Math. Soc. Japan}, 18:380--385, 1966.

\end{thebibliography}

\begin{center}
Elena Mäder-Baumdicker at \\
maeder-baumdicker@mathematik.tu-darmstadt.de\\
Department of Mathematics, Technische Universität Darmstadt\\
Schlossgartenstraße 7, 64289 Darmstadt, Germany

\bigskip
Melanie Rothe at \\
rothe@mathematik.tu-darmstadt.de\\
Department of Mathematics, Technische Universität Darmstadt\\
Schlossgartenstraße 7, 64289 Darmstadt, Germany
\end{center}
\end{document}